\def\csname opt@stmaryrd.sty\endcsname
\newcommand{\F}{\mathbb{F}}
\newcommand{\Q}{\mathbb{Q}}
\newcommand{\R}{\mathbb{R}}
\newcommand{\C}{\mathbb{C}}
\newcommand{\Z}{\mathbb{Z}}
\newcommand{\Aut}{{\rm Aut}}
\newcommand{\PGL}{{\rm PGL}}
\renewcommand{\O}{\mathcal{O}}
\newcommand{\ord}{\operatorname{ord}}
\renewcommand{\P}{\mathbb{P}}
\newcommand{\Jac}{\operatorname{Jac}}
\newcommand{\new}{\operatorname{new}}
\DeclareFontFamily{U}{wncy}{}
    \DeclareFontShape{U}{wncy}{m}{n}{<->wncyr10}{}
    \DeclareSymbolFont{mcy}{U}{wncy}{m}{n}
    \DeclareMathSymbol{\Sh}{\mathord}{mcy}{"58} 
\newtheorem{theorem}{Theorem}[section]
\newtheorem{lemma}[theorem]{Lemma}
\newtheorem{proposition}[theorem]{Proposition}
\newtheorem{corollary}[theorem]{Corollary}
\newtheorem{algorithm}[theorem]{Algorithm}
\theoremstyle{definition}
\newtheorem{definition}[theorem]{Definition}
\newtheorem{example}[theorem]{Example}
\newtheorem{remark}[theorem]{Remark}
\newcommand{\freddy}[1]{{\color{magenta} \textsf{$\spadesuit\spadesuit\spadesuit$ Freddy: [#1]}}}
\title{Shimura curve Atkin--Lehner quotients of genus at most two}
\begin{document}

\subjclass[2020]{Primary 11G18, Secondary 11G10, 11G05, 11G07}

\author{\sc Oana Padurariu}
\address{Oana Padurariu \\
Max-Planck-Institut für Mathematik Bonn\\
Germany}
\urladdr{https://sites.google.com/view/oanapadurariu/home}
\email{oana.padurariu11@gmail.com}

\author{\sc Frederick Saia}
\address{Frederick Saia \\
University of Illinois Chicago\\
USA}
\urladdr{https://fsaia.github.io/site/}
\email{fsaia@uic.edu}

\begin{abstract}
    We provide a complete enumeration of all quotients of genus $0, 1$ and $2$ of the Shimura curves $X_0^D(N)$ over $\Q$ by non-trivial subgroups of Atkin--Lehner involutions. For all $1270$ genus $1$ quotients $X$ with $N$ squarefree, we determine the isomorphism class of the Jacobian $X$. For $146$ non-elliptic genus $1$ quotients $X$ and for $405$ bielliptic genus $2$ quotients $X$, we provide a defining equation for $X$. A main tool for us is the theory of {\v{C}}erednik--Drinfeld uniformizations of the curves $X_0^D(N)$, which we implement in wider generality than has previously been done in the literature. 
\end{abstract}

\maketitle


\section{Introduction}

Since Mazur's work on isogenies of elliptic curves over $\Q$ \cite{MazurIsogenies}, there has been sustained interest in classifying those modular curves $X_0(N)$ over $\Q$ which possess infinitely many points of specified degree $d$. The cases of $d=2$ and $d=3$ were respectively handled by Bars \cite{Bars99} (combined with work of Ogg \cite{Ogg74}) and by Jeon \cite{Jeon}. The $d=4$ case was recently handled independently by Hwang--Jeon \cite{HJ24} and by Derickx--Orli\'{c} \cite{DO24}.

Whereas the curves $X_0(N)$ parameterize elliptic curves with additional isogeny structure, the Shimura curves $X_0^D(N)$ over $\Q$, with $\gcd(D,N) = 1$ and with $D>1$ the discriminant of an indefinite quaternion algebra over $\Q$, parameterize abelian surfaces with quaternionic multiplication and additional structure. These Shimura curves have no odd-degree points (see \cref{thm: no_real_points}), so the first degree to consider is $d=2$. In this case, an analogue of Bars' result on infinitude of quadratic points was proven for the curves $X_0^D(N)$ with $D>1$ by the current authors in \cite{PS25}, generalizing a result of Rotger in the $N=1$ case \cite{Rotger02}. 

Given the motivation of classifying torsion structures of abelian surfaces, the same classification problem for quotients of the Shimura curves $X_0^D(N)$ by subgroups of the full Atkin--Lehner group $W_0(D,N) \leq \Aut(X_0^D(N))$ is equally worthy of attention; these quotients parameterize abelian surfaces with \emph{potential} quaternionic multiplication and additional structure. Additionally, these quotients are themselves significant in studying infinitude of points of fixed degree for the curves $X_0^D(N)$, as they provide subcovers which can act as sources of such points. For example, from \cite{PS25} we know that each Shimura curve $X_0^D(N)$ with infinitely many quadratic points has an Atkin--Lehner quotient $X_0^D(N)/\langle w_m \rangle$ of genus $0$ or $1$ with infinitely many rational points.

In this work, we initiate a study of low-degree points on Atkin--Lehner quotients of the Shimura curves $X_0^D(N)$ by focusing on low-genus quotients. Our first main result is a determination of all Atkin--Lehner quotients of genus at most $2$. 

\begin{theorem}\label{theorem: genus_le_2}
There are exactly $3711$ Shimura curves quotients $X_0^D(N)/W$, with $D>1$ and with $W \leq W_0(D,N)$ a non-trivial subgroup, of genus at most $2$. In particular, there are 
\begin{itemize}
\item $779$ curves $X_0^D(N)/W$ having genus $0$,
\item $1352$ curves $X_0^D(N)/W$ having genus $1$, and
\item $1580$ curves $X_0^D(N)/W$ having genus $2$. 
\end{itemize}
\end{theorem}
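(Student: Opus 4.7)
The approach is to reduce the theorem to a finite explicit computation via an effective genus formula for $X_0^D(N)/W$ combined with an a priori lower bound that forces the quotient genus to exceed $2$ outside a computable range of $(D,N)$. For $W \leq W_0(D,N)$ acting on $X_0^D(N)$, Riemann--Hurwitz over $\C$ yields
\[
g(X_0^D(N)/W) \;=\; 1 + \frac{g(X_0^D(N)) - 1}{|W|} - \frac{1}{2|W|}\sum_{w \in W \setminus \{1\}} f(w),
\]
where $f(w)$ is the number of fixed points of $w$ on $X_0^D(N)(\C)$; crucially, this formulation requires only the individual fixed-point counts $f(w)$, not their intersections. The problem thus reduces to two inputs: the genus $g(X_0^D(N))$ itself, and the count $f(w_m)$ for each Atkin--Lehner involution $w_m \in W_0(D,N)$.

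The genus $g(X_0^D(N))$ is given by the classical Shimizu--Eichler formula, a closed form in $D$, $N$, and counts of elliptic points of orders $2$ and $3$. The fixed-point counts $f(w_m)$ are given by CM class-number formulas of Ogg type: fixed points of $w_m$ parametrize abelian surfaces with QM whose endomorphism ring contains a compatible CM order, and their number is a sum of class numbers of explicit imaginary quadratic orders weighted by local splitting conditions at primes dividing $D$ and $N$. A key technical input of the paper, on which this proof rests, is the extension of the Cerednik--Drinfeld uniformization framework to all $(D,N)$ needed here, which makes the computation of $f(w_m)$ uniform and algorithmic for both squarefree and non-squarefree $N$.

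From the formula one has $g(X_0^D(N)) = \phi(D)\psi(N)/12 + O(2^{\omega(DN)})$, which together with $|W| \leq 2^{\omega(DN)}$ yields $g(X_0^D(N)/W) \to \infty$ as $\phi(D)\psi(N) \to \infty$, uniformly in $W$. Making this bound effective produces an explicit threshold on $\phi(D)\psi(N)$ beyond which $g(X_0^D(N)/W) \geq 3$ for every nontrivial $W$, reducing the theorem to a finite enumeration over triples $(D,N,W)$. For each remaining triple the genus is computed from the Riemann--Hurwitz formula above and tallied by value. The main obstacle is the correct and complete fixed-point computation at all levels, including the subtler non-squarefree $N$ regime where Eichler order normalizers and the associated CM data require care; once the generalized Cerednik--Drinfeld formalism of the paper is in place, the enumeration is mechanical and returns the counts $779$ of genus $0$, $1352$ of genus $1$, and $1580$ of genus $2$, for a total of $3711$.
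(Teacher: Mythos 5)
Your Riemann--Hurwitz formula is correct, and the finite enumeration you describe at the end is indeed how the paper computes each individual genus (via \cref{prop: genus_formula}, \cref{prop: Ogg_fixed_pts}, and Riemann--Hurwitz). The gap is in the reduction to a finite range, and it is quantitative but fatal. Your lower bound on $g(X_0^D(N)/W)$ must absorb the ramification term $\tfrac{1}{2|W|}\sum_{w\neq 1}f(w)$, and the individual counts $f(w_m)$ are sums of class numbers times optimal embedding numbers, of size roughly $\sqrt{m}\log m\cdot 2^{\omega(DN/m)}\asymp (DN)^{1/2+o(1)}$; meanwhile the main term $\tfrac{g-1}{|W|}$ can be as small as $(g-1)/2^{\omega(DN)}$. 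Making your inequality effective therefore requires something like $\sqrt{DN}\gg 2^{\omega(DN)}\log(DN)\log\log(DN)$ (up to constants), and with effective bounds on $h(-d)$ and on $2^{\omega(n)}$ the resulting threshold on $DN$ lands around $10^{13}$--$10^{16}$ rather than the paper's $1.9\times 10^7$. Enumerating all pairs $(D,N)$ in that range is not remotely feasible, so the ``mechanical'' final step cannot actually be carried out. This is precisely why the paper routes the reduction through Abramovich's gonality bound (\cref{theorem: Abr}): gonality is linear in the genus and, unlike the genus, degrades under the quotient map $X_0^D(N)\to X_0^D(N)^*$ only by the degree $d(DN)\le 2\sqrt{DN}$ with \emph{no} ramification correction, giving $g(X_0^D(N))\lesssim 34\sqrt{DN}$ whenever $g(X_0^D(N)^*)\le 2$ and hence $DN\le 19226700$. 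Without that (or some substitute idea that sidesteps the fixed-point contribution), your reduction does not yield a computation that terminates in practice.

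Two smaller points. First, you attribute the computability of the fixed-point counts $f(w_m)$ to the paper's {\v{C}}erednik--Drinfeld machinery; this is a misattribution. Those counts are pure quaternion arithmetic (Ogg's and Eichler's optimal embedding numbers, \cref{prop: Ogg_fixed_pts}) and have nothing to do with $p$-adic uniformization; moreover the CD framework in the paper is restricted to squarefree $N$, whereas the genus enumeration must, and does, cover non-squarefree $N$ via the embedding-number formulas. Second, even at the correct threshold the paper filters in two stages --- first computing only $g(X_0^D(N)^*)$ for each pair, then enumerating subgroups $W$ only for the $836$ surviving pairs --- and some version of this staging is needed for feasibility; enumerating all subgroups of $W_0(D,N)$ for every pair in range would be prohibitively expensive.
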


Each quotient of genus $0$ is either a pointless conic over $\Q$, or has a rational point and is therefore isomorphic to $\mathbb{P}^1_\Q$. Each genus $1$ quotient either has a rational point, and is thus an elliptic curve isomorphic to its Jacobian, or is a non-elliptic curve of genus $1$. We determine Jacobians in the genus $1$ case when $N$ is squarefree, and we determine them up to isogeny when $N$ is not squarefree. 

\begin{theorem}\label{theorem: iso_classes_thm_intro}
Let $X = X_0^D(N)/W$ be a genus $1$ curve with $W \leq W_0(D,N)$ a non-trivial Atkin--Lehner subgroup. 
\begin{enumerate}
\item If $X$ is among the $1270$ such curves with $N$ squarefree, then the isomorphism class of $\Jac(X)$ is as given in \cref{table: genus_1_sqfree_iso_classes}.
\item If $X$ is among the $82$ such curves with $N$ not squarefree, then the isogeny class of $\Jac(X)$ is as given in \cref{table: genus_1_not_sqfree_isog_classes}.
\end{enumerate}
\end{theorem}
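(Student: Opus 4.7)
My plan is to combine the Jacquet--Langlands correspondence with the explicit Cerednik--Drinfeld uniformization to identify the Jacobians of the genus $1$ quotients enumerated in \cref{theorem: genus_le_2}. For each $X = X_0^D(N)/W$, the Jacobian $\Jac(X)$ is $\Q$-isogenous to the maximal abelian subvariety of $J_0^D(N)$ on which $W$ acts trivially. Since any indefinite quaternion discriminant $D$ is a product of an even number of distinct primes, hence squarefree, the hypothesis $N$ squarefree coincides with $DN$ being squarefree. In that range, Jacquet--Langlands gives an isomorphism of Hecke/Atkin--Lehner modules between $J_0^D(N) \otimes \Q$ and the $D$-new quotient of $J_0(DN) \otimes \Q$. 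Hence each simple isogeny factor of $J_0^D(N)$ corresponds to a newform $f$ of some level $M$ with $D \mid M \mid DN$ and $f$ new at every prime dividing $D$, and $\Jac(X)$ is isogenous to the product of those $A_f$ on which each generator of $W$ has eigenvalue $+1$. For genus $1$ this isolates a single newform whose associated elliptic isogeny class can be read off of the LMFDB, which already proves part (2), where only isogeny classes are claimed.

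For part (1), I must further refine this isogeny class to the $\Q$-isomorphism class within the (finite) Jacquet--Langlands isogeny class. The tool here is the Cerednik--Drinfeld $p$-adic uniformization at a prime $p \mid D$: over an unramified quadratic extension of $\Q_p$, the curve $X_0^D(N)$ is described as a Mumford quotient of the Drinfeld upper half-plane by a discrete arithmetic subgroup arising from the definite quaternion algebra of discriminant $D/p$ with Eichler level $Np$. From this description one extracts the character lattice of the toric part of the N\'eron model of $J_0^D(N)$ at $p$ together with its monodromy pairing and its $W_0(D,N)$-action, and taking $W$-invariants yields the analogous data for $\Jac(X)$. For an elliptic curve with multiplicative reduction at $p$, the character lattice and monodromy pairing recover the Tamagawa number at $p$ (and, up to a $p$-adic unit, the minimal discriminant), which in most cases uniquely pins down a curve within its isogeny class; any residual ambiguity is resolved by comparing local invariants at primes dividing $N$, which can be read off of the classical modular parametrization on the Jacquet--Langlands side.

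The main obstacle is the breadth and generality required for the second step: the Cerednik--Drinfeld computation has previously been carried out in the literature mostly for $N = 1$ or other restricted levels, and handling the $1270$ squarefree cases of part (1) requires the more general implementation of the uniformization advertised in the abstract. In particular, one must explicitly identify the arithmetic subgroup together with its $W_0(D,N)$-action for all squarefree $N$ coprime to $D$, transport the Atkin--Lehner action compatibly through Jacquet--Langlands so that eigenvalues match the corresponding classical newforms, and correctly handle the cases in which the Tamagawa data at a single prime $p \mid D$ fails to separate the candidate elliptic curves. By contrast, the $82$ non-squarefree cases of part (2) are easier precisely because no such refinement beyond isogeny class is attempted, so Jacquet--Langlands together with the LMFDB newform tables suffices there.
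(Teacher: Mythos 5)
Your overall strategy matches the paper's: the isogeny class is determined via the Jacquet--Langlands/Ribet transfer to the $D$-new part of $\Jac(X_0(DN))$ (with the Atkin--Lehner signs twisted by $(-1)^{\omega(\gcd(D,m))}$, which you correctly flag as needing care), including the observation that the relevant newform may live at a level $M$ properly dividing $DN$; and the isomorphism class is then pinned down inside the isogeny class by computing the local reduction data of $\Jac(X)$ at primes $p \mid D$ via the {\v{C}}erednik--Drinfeld uniformization. Your ``character lattice plus monodromy pairing'' is, for an elliptic curve with multiplicative reduction, exactly the Kodaira symbol $I_n$ that the paper extracts from the dual graph, and the matching step is the same: by the Dokchitser--Dokchitser behaviour of $I_n$ under $\ell$-isogenies, the symbols at all $p \mid D$ single out a unique curve in the isogeny class in all but a handful of cases.

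The genuine gap is in your tie-breaking step for those residual cases. You claim the ambiguity ``is resolved by comparing local invariants at primes dividing $N$, which can be read off of the classical modular parametrization on the Jacquet--Langlands side.'' This does not work: the Jacquet--Langlands/Ribet map is only an isogeny, so the modular side determines nothing finer than the isogeny class, and {\v{C}}erednik--Drinfeld gives you no access to the reduction of $X_0^D(N)/W$ at primes dividing $N$ (the uniformization exists only at primes dividing $D$). Concretely, for $X_0^6(17)/\langle w_2,w_3\rangle$ the two candidates $102$b$5$ and $102$b$6$ are linked by a $4$-isogeny and have identical symbols $I_1, I_2$ at $2$ and $3$, so no local computation of the type you describe can separate them; the paper resolves this and the four other ambiguous triples by entirely different means --- explicit hyperelliptic models of $X_0^D(N)$ from prior work, the explicit equation of a non-elliptic genus $1$ cover (for $X_0^{21}(5)/\langle w_3,w_7\rangle$), and the genus $2$ bielliptic quotient machinery (for $X_0^{33}(2)/\langle w_6,w_{22}\rangle$). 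Without some such independent input, your argument proves part (1) only for the $1265$ curves where the $p \mid D$ data already suffices.
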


From \cref{theorem: genus_le_2} and \cref{theorem: iso_classes_thm_intro}, combined with investigations regarding the existence of rational points on Atkin--Lehner quotients of genera $0$ and $1$, we reach the following result regarding infinitude of rational points. 

\begin{corollary}\label{corollary: infinitude_of_rational_points}
\begin{enumerate}
    \item For each of the $650$ triples $(D,N,W)$ listed in \cref{table: genus_0_rat_pts}, the curve $X_0^D(N)/W$ is isomorphic to $\mathbb{P}^1_\Q$. 
    \item For each of the $537$ triples $(D,N,W)$ listed in \cref{table: genus_1_rat_pts_pos_rank}, the curve $X_0^D(N)/W$ is an elliptic curve with positive rank. 
    \item If $X_0^D(N)/W$ has infinitely many rational points and $(D,N,W)$ is not in \cref{table: genus_0_rat_pts} or \cref{table: genus_1_rat_pts_pos_rank}, then $(D,N,W)$ must be among the $48$ triples listed in \cref{table: genus_0_unknown_rat_pts} or among the $54$ triples listed in \cref{table: genus_1_unknown_rat_pts_pos_rank}.
\end{enumerate}
\end{corollary}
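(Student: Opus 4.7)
The plan is to combine \cref{theorem: genus_le_2} and \cref{theorem: iso_classes_thm_intro} with Faltings' theorem, which forces any quotient $X_0^D(N)/W$ with infinitely many rational points to have genus at most $1$. The enumeration in \cref{theorem: genus_le_2} then restricts attention to a finite list of $779 + 1352 = 2131$ candidate triples $(D,N,W)$, which I would analyze in the genus $0$ and genus $1$ cases separately.

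For genus $0$, the quotient $X_0^D(N)/W$ is a smooth conic over $\Q$, so by Hasse--Minkowski it has a rational point iff it has points over every completion of $\Q$, and in that case is isomorphic to $\mathbb{P}^1_\Q$ and has infinitely many rational points. I would search for rational points on each of the $779$ candidates by pushing forward known low-degree points on the covering curve $X_0^D(N)$, for instance images of CM points from imaginary quadratic orders of small class number or images of fixed points of Atkin--Lehner involutions that happen to be rational on the quotient. When no rational point is found, I would seek to prove non-existence by exhibiting a local obstruction, using the Cerednik--Drinfeld uniformizations developed elsewhere in the paper to access reductions at primes $p \mid D$ and standard $p$-adic analysis for primes dividing $N$. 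This yields the $650$ confirmed $\mathbb{P}^1_\Q$ cases of part (1), and leaves the $48$ triples of \cref{table: genus_0_unknown_rat_pts} where neither a rational point nor a local obstruction is currently available.

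For genus $1$, the key dichotomy is whether $X_0^D(N)/W$ has a rational point: if not, it has zero rational points and is excluded; if so, it is an elliptic curve over $\Q$ and by Mordell--Weil has infinitely many rational points iff its rank is positive. By \cref{theorem: iso_classes_thm_intro} the isomorphism class (or at least the isogeny class) of $\Jac(X_0^D(N)/W)$ is known for every genus $1$ quotient, so I would read off the Mordell--Weil rank from the LMFDB when the conductor lies in available tables, and otherwise compute it via descent together with analytic-rank algorithms. Those with provably positive rank give the $537$ entries of \cref{table: genus_1_rat_pts_pos_rank}; the $54$ ambiguous cases in \cref{table: genus_1_unknown_rat_pts_pos_rank} are those for which a rational point is known but the provable rank has not been established. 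Part (3) then follows by assembling these lists with Faltings' theorem for genera $\geq 2$.

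The main obstacle is handling the unknown cases. For the genus $0$ unknowns, one must either locate a subtle local obstruction at a prime not yet tested, or produce a rational point via a finer search on $X_0^D(N)$ (which typically means working with a tractable defining model in the absence of an obvious map to a curve of known rank). For the genus $1$ unknowns, the obstacle is a provable rank determination for specific elliptic curves of large conductor, where $2$-descent may not suffice and conditional analytic-rank arguments fall short of rigor. Both are concrete but delicate tasks that sit at the boundary of current algorithmic capabilities on each individual curve.
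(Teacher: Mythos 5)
Your overall strategy coincides with the paper's: Faltings reduces part (3) to the genus $\leq 1$ quotients enumerated in \cref{theorem: genus_le_2}; genus $0$ is handled by Hasse--Minkowski together with rational CM points and local obstructions at $p \mid D$ (via the {\v{C}}erednik--Drinfeld machinery of \cref{algorithm: local_points}); genus $1$ is handled by deciding rational point existence and then computing ranks from the isogeny classes supplied by \cref{theorem: iso_classes_thm_intro}.

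There is, however, one genuine error in your account of the genus $1$ case, and it concerns exactly where the residual ambiguity lives. You describe the $54$ entries of \cref{table: genus_1_unknown_rat_pts_pos_rank} as curves ``for which a rational point is known but the provable rank has not been established.'' This is backwards. Since \cref{theorem: iso_classes_thm_intro} pins down the isogeny class of $\Jac(X)$ in every genus $1$ case and the Mordell--Weil rank is an isogeny invariant, the rank is always computable (the paper does this in Magma; the conductors involved are modest). The quantity that resists determination is whether the genus $1$ curve $X$ itself has a rational point, i.e.\ whether it is the trivial torsor under its Jacobian --- the paper's methods (rational CM points, Kuhn's genus $2$ cover criterion, local solvability, and the twist-elimination argument of \cref{non_elliptic_rat_pts_section}) are not decision procedures, and Hasse principle failures cannot be excluded without further input. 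Accordingly, the unknown table consists of curves whose Jacobian \emph{provably has positive rank} but whose rational point status is open. Your write-up states the dichotomy ``has a rational point / does not'' but never addresses the curves for which neither branch can be certified; since part (3) requires the unknown tables to contain every triple that could still have infinitely many rational points, this is precisely the set you must characterize correctly, and your version would populate it with the wrong curves.
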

\begin{proof}
Part (1) follows from our determination that the listed curves have rational points, which comes in \cref{rational_points_subsection}. Part (2) follows from rational points investigations for genus $1$ quotients which appear in \cref{rational_points_subsection} and \cref{non_elliptic_rat_pts_section}, along with rank computations in Magma for the isogeny classes of the Jacobians of these curves (as given by \cref{theorem: iso_classes_thm_intro}). 

It follows from Faltings' Theorem that if $X_0^D(N)/W$ has infinitely many rational points, then it must be isomorphic to $\P^1_\Q$ or be an elliptic curve over $\Q$ of positive rank. If this is the case and $(D,N,W)$ does not already appear in one of the tables referenced in parts (1) and (2), then $X_0^D(N)$ must either be
\begin{itemize}
    \item a genus $0$ curve for which we remain unsure of the existence of a rational point, in which case $(D,N,W)$ appears in \cref{table: genus_0_unknown_rat_pts}, or
    \item a genus $1$ curve with positive rank Jacobian for which we remain unsure of the existence of a rational point, in which case $(D,N,W)$ appears in \cref{table: genus_1_unknown_rat_pts_pos_rank}. \qedhere
\end{itemize}
\end{proof}

We also produce models for hundreds of Atkin--Lehner quotients of Shimura curves. To our knowledge, of the models referenced in the following theorem fewer than $70$ have appeared in prior work (including \cite{Kurihara, GR04, GR06, GY17, PS23}). 

\begin{theorem}\label{theorem: eqns_intro_version}
\begin{enumerate}
    \item Each of the curves $X = X_0^D(N)/W$ listed in \cref{table: non_elliptic_eqns} is non-elliptic of genus $1$. For the $4$ triples $(D,N,W)$ in the set 
    \begin{align*}
        \Big\{ &\left(14, 11, \langle w_2, w_{11} \rangle \right), \left(46, 3, \langle w_2, w_3 \rangle \right), \\  &\left(570, 1, \langle w_5, w_{57} \rangle \right), 
    \left(570, 1, \langle w_6, w_{190} \rangle\right) \Big\},
    \end{align*} 
    the curve $X$ admits exactly one of the two models given in \cref{table: non_elliptic_eqns} over $\Q$. Each of the remaining $146$ curves $X$ in \cref{table: non_elliptic_eqns} admits the given model over $\Q$. 

    \item For the $405$ curves $X = X_0^D(N)/W$ listed in \cref{table: genus_2_bielliptic_eqns}, we have that $X$ is bielliptic of genus $2$ and admits the given model over $\Q$. 
\end{enumerate}
\end{theorem}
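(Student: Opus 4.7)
The plan is to produce explicit equations by combining two tools already central to the paper: the Cerednik--Drinfeld $p$-adic uniformization at a prime $p \mid D$, which gives rigid-analytic access to $X_0^D(N)$, and Atkin--Lehner invariance under $W$, which identifies the quotient together with its Jacobian (the latter already determined by \cref{theorem: iso_classes_thm_intro} in the genus $1$ case).

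For each triple $(D,N,W)$ listed, I would fix a prime $p \mid D$ and use the Cerednik--Drinfeld description of $X_0^D(N) \otimes \Q_p$ as a Mumford curve to compute a basis of the space $S_2(X_0^D(N))$ of weight-$2$ cusp forms together with the action of the full Atkin--Lehner group $W_0(D,N)$. Taking $W$-invariants yields a space of dimension equal to the genus $g$ of $X = X_0^D(N)/W$. For the $405$ genus-$2$ quotients, which are hyperelliptic, the canonical map is a double cover of $\mathbb{P}^1$; one computes the relation among the three quadratic monomials in a chosen basis $\{\omega_1,\omega_2\}$ of $S_2$ to extract a hyperelliptic model $y^2 = f(x)$ of degree $5$ or $6$. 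The bielliptic involution is realized by a coset of order $2$ in $W_0(D,N)/W$ acting on $X$, and the requirement that $f$ be symmetric under this involution both serves as a consistency check and sharply constrains the coefficients to be determined.

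For the $150$ genus-$1$ non-elliptic quotients one has $\dim S_2 = 1$, so the canonical approach is trivial and a different method is needed. Here I would exploit the rigid-analytic CM points arising from optimal embeddings of orders in imaginary quadratic fields of small class number, project them to $X_0^D(N)/W$ to obtain enough quadratic points, and combine them with the known isomorphism class of $\Jac(X)$ from \cref{theorem: iso_classes_thm_intro} to pin down a defining equation of the form $y^2 = f(x)$ with $f$ a squarefree quartic. Candidate models are then verified by checking that the Jacobian of the model has the predicted conductor and matches $\Jac(X)$, and that reduction at small primes of good reduction yields the expected point counts. Techniques along these lines have been successfully used in prior work (e.g.\ \cite{GR04, GR06, PS23}), but only in very small cases; here they must be organized systematically.

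The main obstacle is the four exceptional triples singled out in part (1), where two twist candidates both remain consistent with every invariant computed from the Cerednik--Drinfeld uniformization at a single $p \mid D$ and with the known Jacobian. Distinguishing the correct twist in those cases would require either an independent construction of a quadratic point over a specific field, or an analysis at another prime of bad reduction, and the statement is honest in leaving the ambiguity rather than forcing a choice. Beyond these outliers, the real substance of the theorem lies not in any single case but in the fact that the whole procedure must be carried out uniformly across hundreds of triples $(D,N,W)$, which is precisely where the authors' broader implementation of Cerednik--Drinfeld beyond the previously treated ranges is indispensable.
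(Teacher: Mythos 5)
Your genus--$2$ recipe breaks at the key step. For a genus $2$ curve the three products $\omega_1^2,\ \omega_1\omega_2,\ \omega_2^2$ of a basis of the holomorphic differentials are linearly \emph{independent} -- they span the full $3$-dimensional space $H^0(X,(\Omega^1_X)^{\otimes 2})$ -- so there is no quadratic relation to compute; the canonical map is just the hyperelliptic double cover of $\P^1$, and recovering $f$ requires a section of higher weight and a degree-$6$ relation. Even granting that, the rigid-analytic uniformization only hands you a model over $\Q_p$, and descending it to $\Q$ (in particular choosing the right quadratic twist of the double cover) is exactly the hard part. The paper never computes cusp forms at all: it takes the two elliptic quotients $E_1=X/\langle u_1\rangle$ and $E_2=X/\langle u_2\rangle$, whose $\Q$-isomorphism (or at least isogeny) classes are already known from the genus-$1$ work, and glues them via \cref{GR_bielliptic_eqn_prop} to get a finite list of candidates $y^2=ax^6+bx^4+cx^2+d$ over $\Q$; the wrong candidates are then eliminated by comparing reduction data at odd $p\mid D$ (read off the dual graphs and matched against Namikawa--Ueno types via \texttt{redlib}) and by real-point checks.

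For the genus-$1$ part the missing idea is the classification of candidate models. Knowing $\Jac(X)$ together with some quadratic points does not pin down $X$: every candidate is a torsor under the same elliptic curve, so conductors and point counts at primes of good reduction cannot separate them. What the paper actually uses (\cref{proposition: GR06_twist_equations}) is that $X$ is a $2$-covering of the twist $E_d$ trivialized over the CM field $\Q(\sqrt{d})$, hence corresponds to a class in the weak Mordell--Weil group modulo $2$; this produces an explicit finite list of quartics $y^2=df_i(x)$, exactly one of which is correct, and the list is then cut down by local solvability at $p\mid D$ (via \cref{algorithm: local_points}), real points, and comparison of the splitting field of $f_i$ with the compositum of ring class fields attached to the fixed points of the hyperelliptic quotient map. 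Your description of the four exceptional triples is also off: they are not quadratic twists indistinguishable at a single $p\mid D$, but two distinct $2$-covering classes that survive all local tests, for which the final ramification-field criterion is either provably too coarse (the first two cases) or computationally out of reach (the two $D=570$ cases).
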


A main tool in all parts of this study, following the initial enumeration of quotients of genus at most $2$, is the theory of {\v{C}}erednik--Drinfeld $p$-adic uniformizations of the Shimura curves $X_0^D(N)$ for primes $p \mid D$. We implement an algorithm to determine the structure of the dual graph of the model over $\Z_p$, for $p \mid D$, of an Atkin--Lehner quotient $X_0^D(N)/W$ with $N$ squarefree, from which we are able to compute reduction types of such quotients at primes dividing $D$ (see \cref{algorithm: Kodaira_alg}) and to check for the existence of local points at primes dividing $D$ (see \cref{algorithm: local_points}). The former referenced algorithm is used in our proof of \cref{theorem: iso_classes_thm_intro}, the latter is used in determining the existence of rational points for \cref{corollary: infinitude_of_rational_points}, and both are integral in our results on equations of Atkin--Lehner quotients.

Explicit computations using the theory of {\v{C}}erednik--Drinfeld in this context in past works (e.g., \cite{Kurihara, Ogg85, GR06}) seem to have been limited either to level $N=1$ or to quotients by single Atkin--Lehner involutions. It is our hope that our implementation, along with the extensive computations and examples provided in this work, are helpful to readers wishing to learn or use this theory. 

The remainder of this work is organized as follows. 
\begin{itemize}
    \item In \cref{Background_section}, we provide background material on Shimura curves, their Atkin--Lehner quotients, and the theory of {\v{C}}erednik--Drinfeld uniformizations of these curves. 
    \item We state in \cref{Kodaira_section} the algorithms referenced above. 
    \item \cref{genus_le_2_section} features the proof of \cref{theorem: genus_le_2}, as well as a first pass concerning rational points on quotients of genus $0$ and $1$ in \cref{rational_points_subsection}.
    \item We determine isogeny classes of Jacobians genus $1$ in \cref{genus_1_isogeny_class_section}, and subsequently determine isomorphism classes of these Jacobians when $N$ is squarefree in \cref{genus_1_iso_classes_section}. The main results of these two sections combine to give \cref{theorem: iso_classes_thm_intro}.
    \item Models for non-elliptic genus $1$ curves are determined in \cref{non_elliptic_eqns_section}. We also briefly return to the question of rational points here (\cref{non_elliptic_rat_pts_section}) in order to prove several genus $1$ quotients have rational points in a non-constructive manner. We produce equations for bielliptic genus $2$ Atkin--Lehner quotients in \cref{genus_2_bielliptics_section}. \cref{theorem: eqns_intro_version} follows from the main results of these two sections, \cref{theorem: non-elliptic_genus_one_equations} and \cref{theorem: bielliptic_eqns}. 
\end{itemize}

All computations described in this paper were performed in Magma \cite{Magma}. All related code, as well as computed lists, can be found in the Github repository \cite{PSRepo}.


\section*{Acknowledgments} 

We thank Jim Stankewicz for helpful conversation and for sharing his code for computing the dual graph of the special fiber of an Atkin--Lehner quotient $X_0^D(N)/\langle w_p \rangle$ over $\Z_p$. This project was completed in part while F.S.\@ was a visitor at the Max-Planck-Institut f\"{u}r Mathematik in Bonn, and he is much obliged to the institute for its support and to the staff and researchers at MPIM for their hospitality. F.S.\@ also gratefully acknowledges support for this research provided by an AMS-Simons Travel Grant.


\section{Background}\label{Background_section}

\subsection{The Shimura curves $X_0^D(N)$}\label{Shimura_curves_background_subsection}

A \textbf{quaternion algebra} over a field $F$ is a central simple algebra over $F$ of dimension $4$. For example, we always have the split quaternion algebra $M_2(F)$ over $F$. For a quaternion algebra $B$ over $\Q$, we call $B$ \textbf{indefinite} if it is split over the real numbers, i.e., if
\[ B_\infty := B \otimes_{\Q} \R \cong M_2(\R), \]
and otherwise we call $B$ \textbf{definite}. A quaternion algebra $B$ over $\Q$ is determined up to isomorphism by its \textbf{discriminant} $D$, which is the product of primes $p$ such that $B$ is ramified (non-split) over $\Q_p$. This discriminant $D$ is the product of an even or odd number of distinct prime numbers according to whether $B$ is indefinite or definite, respectively. 

Fix $D$ the discriminant of an indefinite quaternion algebra $B$ over $\Q$ and $N$ a positive integer with $\gcd(D,N) = 1$. We let $\mathcal{O}_N$ be a fixed \textbf{Eichler order} in $B$ of level $N$. That is, $\mathcal{O}_N$ is the intersection of two (not necessarily distinct) maximal orders in $B$, and can be defined locally at each finite place of $\Q$ as follows:
\begin{itemize}
    \item If $p \mid D$, then $B_p := B \otimes_{\Q} \Q_p$ is non-split (is a division field) and the localization $\mathcal{O}_{N,p} := \mathcal{O}_N \times_\Z \Z_p$ is taken to be the unique maximal order of $B_p$.
    \item Otherwise, if $p \nmid D$, we fix an isomorphism $\psi_\ell : B_p \to M_2(\Q_p)$ and we take $\mathcal{O}_{N,p}$ such that 
    \[ \psi_p\left(\mathcal{O}_{N,p})\right) = \left \{ \begin{pmatrix} a & b \\ Nc & d \end{pmatrix} \mid a,b,c,d \in \Z_p \right\}. \]
\end{itemize}

Fixing an isomorphism $\psi_\infty : B_\infty \to M_2(\R)$, we have from this map an action of $B_\infty$ on the double half-plane $\Omega := \C \setminus \R$. We have a Riemann surface $X = \mathcal{O}_N^\times \backslash \Omega$ which is determined up to isomorphism by $D$ and $N$ (in particular, does not depend on our choice of Eichler order), and it is a theorem of Shimura \cite[Main Theorem 1]{Sh67} that $X$ has a canonical model defined over $\Q$ which we denote by $X_0^D(N)$. By work of Drinfeld (see \cite{BC91,Buzzard}), $X_0^D(N)$ admits a flat proper model over $\Z$ which is smooth over $\Z\left[\tfrac{1}{DN}\right]$. In particular, $X_0^D(N)$ has good reduction at all primes $p \nmid DN$.

If $D=1$ then this construction recovers the (non-compact, in our setup) coarse moduli scheme $Y_0(N)$ parameterizing cyclic $N$-isogenies of elliptic curves. If $D>1$, then $X_0^D(N)$ is compact and parameterizes abelian surfaces with quaternionic multiplication by the order $\O_N$. We refer to \cite[Proposition 2.4]{Saia24} for other moduli descriptions, including one involving isogenies of QM abelian surfaces, which are equivalent up to isomorphism of the associated coarse moduli scheme $X_0^D(N)$. In this work, while we are highly motivated by questions related to torsion structures of abelian surfaces, these moduli interpretations are not important ingredients to our study.

In the compact case, it is a result of Shimura that these curves have no real points:
\begin{theorem}\cite[Theorem 0]{Sh75}\label{thm: no_real_points}
If $D>1$, then $X_0^D(N)(\R) = \emptyset$. 
\end{theorem}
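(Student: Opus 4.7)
The plan is to reproduce Shimura's classical argument (from \cite{Sh75}), which analyzes complex conjugation on the complex uniformization and rules out fixed points when $D > 1$. A real point on $X_0^D(N)$ would be a point of $X_0^D(N)(\C)$ fixed by the anti-holomorphic involution induced by the real structure, so the strategy is to describe this involution group-theoretically and then show it has no fixed points in $\mathcal{O}_N^\times \backslash \Omega$.

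First, I would fix the identification $X_0^D(N)(\C) = \mathcal{O}_N^\times \backslash \mathcal{H}$, where $\mathcal{H}$ is the upper half-plane component of $\Omega = \C\setminus\R$, using the embedding $\psi_\infty \colon B_\infty \xrightarrow{\sim} M_2(\R)$. Complex conjugation on $\mathcal{H}$ sends $\tau \mapsto \bar\tau$, swapping the two components of $\Omega$, so to descend it to $X_0^D(N)(\C)$ one has to compose with an element $\gamma \in B^\times$ whose image under $\psi_\infty$ has negative determinant; equivalently, $\gamma$ lies in the normalizer $N_{B^\times}(\mathcal{O}_N^\times)$ and has negative reduced norm. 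The canonical model of Shimura pins down precisely which such coset of $\mathcal{O}_N^\times$ in its normalizer mediates complex conjugation, and I would invoke that input rather than reprove it.

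Second, a real point would then translate into an element $\tau \in \mathcal{H}$ and a $\gamma$ in the distinguished coset with $\gamma \cdot \bar\tau = \tau$. A standard manipulation shows that such a pair produces an element $\alpha \in B^\times$ of negative reduced norm $-n<0$ whose square $\alpha^2$ is a negative rational; in particular, $\alpha$ generates an embedding of an imaginary quadratic field $K = \Q(\alpha) \hookrightarrow B$, together with additional integrality conditions coming from the requirement that $\gamma$ normalize the Eichler order $\mathcal{O}_N$. In the $D=1$ case such $\alpha$ exists (and indeed $X_0(N)(\R) \neq \emptyset$), so the contradiction must come entirely from the ramification of $B$ at the primes $p\mid D$.

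Third, I would derive the contradiction locally. By the theory of embeddings of quadratic fields into quaternion algebras, $K$ embeds in $B$ if and only if no prime $p\mid D$ splits in $K$. Combined with the constraint that the normalizer element $\gamma$ has reduced norm of a prescribed sign at each $p\mid D$ (this is where the canonical-model input is used), one shows that at any ramified prime $p\mid D$ the negativity of the norm of $\alpha$ together with the local behavior of $K/\Q$ at $p$ is inconsistent; since $D>1$ ensures at least one such $p$ exists, no $\alpha$ can exist and hence no real point can exist. The main obstacle is the bookkeeping of step three: identifying exactly which coset represents complex conjugation and matching the resulting sign/ramification conditions on $\alpha$ with the embedding criteria for $K \hookrightarrow B$. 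Once that local incompatibility is made precise at a single prime $p\mid D$, the conclusion $X_0^D(N)(\R)=\emptyset$ follows immediately, and the argument is independent of $N$ (subject to $\gcd(D,N)=1$).
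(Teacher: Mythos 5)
The paper does not prove this statement; it is quoted directly from Shimura \cite[Theorem 0]{Sh75}, so there is no internal proof to compare against. Judged as a reconstruction of Shimura's argument, your outline has the right architecture (realize complex conjugation as an anti-holomorphic map $z \mapsto \alpha\bar z$ with $\mathrm{nrd}(\alpha)<0$ normalizing $\mathcal{O}_N$, extract from a fixed point an algebraic element of $B$, and derive a contradiction from $D>1$), but the key algebraic step is wrong in a way that destroys the contradiction.

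Writing $\psi_\infty(\alpha)=\left(\begin{smallmatrix} a & b\\ c & d\end{smallmatrix}\right)$ with $ad-bc<0$, the fixed-point equation $\alpha\bar z=z$ for $z=x+iy$ with $y>0$ has imaginary part $(a+d)y=0$, so it forces $\mathrm{trd}(\alpha)=0$; hence $\alpha^2=\mathrm{trd}(\alpha)\alpha-\mathrm{nrd}(\alpha)=-\mathrm{nrd}(\alpha)>0$. So $\alpha^2$ is a \emph{positive} rational and $\Q(\alpha)$ is real quadratic or the split algebra $\Q\times\Q$ --- not, as you assert, a negative rational generating an imaginary quadratic field $K$. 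This sign matters: imaginary quadratic fields in which no $p\mid D$ splits embed into $B$ in abundance (their fixed points are exactly the CM points, which the curve certainly has), so the ``local incompatibility at a single prime $p\mid D$'' you hope to extract in step three does not exist for the setup you describe. The actual contradiction is more rigid: by Eichler's norm theorem, $\mathcal{O}_N^\times$ contains a unit $\mu$ of reduced norm $-1$, and complex conjugation on the canonical model is induced by $z\mapsto\mu\bar z$; a real point then yields a unit $\alpha=\gamma^{-1}\mu\in\mathcal{O}_N^\times$ with $\mathrm{nrd}(\alpha)=-1$ and $\mathrm{trd}(\alpha)=0$, whence $\alpha^2=1$ with $\alpha\neq\pm1$. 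Then $(\alpha-1)(\alpha+1)=0$ exhibits zero divisors, which is impossible in the division algebra $B$ when $D>1$ (equivalently, the split \'etale algebra $\Q[x]/(x^2-1)$ embeds in $B$ only when $B\cong M_2(\Q)$, i.e.\ $D=1$ --- the degenerate case $m=1$ of Ogg's real-point criterion \cite[Proposition 1]{Ogg83}). To repair your proof you should replace the imaginary quadratic field $K$ and the embedding-obstruction bookkeeping of step three with this trace-zero, norm-$(-1)$ computation.
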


We have a natural covering map $\pi_N : X_0^D(N) \to X_0^D(1)$ of degree $\psi(N)$ given by the Dedekind psi function:
\[ \psi(N) = N \prod_{\substack{p \mid N \\ \text{prime}}} \left(1+ \frac{1}{p}\right). \]
These covering maps are compatible, in the sense that for $N_1 \mid N_2$ there is a natural map $\pi : X_0^D(N_2) \to X_0^D(N_1)$ of degree $\psi(N_2)/\psi(N_1)$ such that $\pi_{N_1} \circ \pi = \pi_{N_2}$.

We have the following formula for the genus of the curve $X_0^D(N)$ (see, for example, \cite[Thm. 39.4.20]{Voight21}).
\begin{proposition}\label{prop: genus_formula}
For $D>1$ the discriminant of an indefinite quaternion algebra over~$\Q$, for $N$ a positive integer coprime to $D$ and for $k \in \{3,4\}$, define
\[
e_k(D,N) \colonequals \prod_{\substack{p\mid D \\ \text{prime}}} \left(1 - \left(\frac{-k}{p}\right)\right) \prod_{\substack{\ell \parallel N \\ \text{prime}}} \left(1 + \left(\frac{-k}{\ell}\right)\right) \prod_{\substack{\ell^2 \mid N, \\ \ell \text{ prime}}}\delta_\ell(k) ,
\]
where $\left(\frac{\cdot}{\cdot}\right)$ is the Kronecker quadratic symbol, and
\[   
\delta_\ell(k) = 
     \begin{cases}
       2 &\quad\textnormal{if } \left(\frac{-k}{\ell}\right) = 1, \\
       0 &\quad\textnormal{otherwise.}  \\
     \end{cases}
\]
Then, the genus of $X_0^D(N)$ is given by
\[
g(X_0^D(N)) = 1 + \frac{\phi(D)\psi(N)}{12} - \frac{e_4(D,N)}{4} - \frac{e_3(D,N)}{3},
\]
where $\phi$ denotes the Euler totient function.
\end{proposition}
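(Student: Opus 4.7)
The plan is to deduce the formula from the Gauss--Bonnet theorem applied to the Fuchsian uniformization $X_0^D(N)(\C) \cong \Gamma_0^D(N) \backslash \Omega^+$, where $\Gamma_0^D(N) := \psi_\infty(\mathcal{O}_N^\times)/\{\pm 1\} \subset \mathrm{PSL}_2(\R)$ and $\Omega^+ \subset \Omega$ is the upper half-plane. Since $D>1$ the quotient is compact, so if $\Gamma_0^D(N)$ has signature $(g; m_1,\ldots,m_r)$ then
\[
\mathrm{Area}(\Gamma_0^D(N) \backslash \Omega^+) \;=\; 2\pi\Big(2g-2 + \sum_{i=1}^r\bigl(1 - \tfrac{1}{m_i}\bigr)\Big).
\]
The non-scalar torsion elements of $\mathcal{O}_N^\times$ come from embeddings of $\Z[i]$ or $\Z[\zeta_3]$, so the $m_i$ all lie in $\{2,3\}$; writing $e_2$ and $e_3$ for the respective counts of elliptic points, solving for $g$ yields
\[
g = 1 + \frac{\mathrm{Area}(\Gamma_0^D(N) \backslash \Omega^+)}{4\pi} - \frac{e_2}{4} - \frac{e_3}{3}.
\]
So the task reduces to computing two things: the hyperbolic area, and the two elliptic point counts.

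For the area, the first step is Eichler's mass formula: for a maximal order in $B$, $\mathrm{Area}(\Gamma_0^D(1)\backslash \Omega^+) = \pi\phi(D)/3$. The inclusion $\mathcal{O}_N \subseteq \mathcal{O}_1$ of Eichler orders induces a covering $X_0^D(N) \to X_0^D(1)$ of degree $\psi(N)$ (matching the natural map $\pi_N$ discussed after \cref{thm: no_real_points}), which multiplies the area by $\psi(N)$. This gives $\mathrm{Area}/(4\pi) = \phi(D)\psi(N)/12$, yielding the leading term of the formula.

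For the elliptic point counts, I would use that $e_k(D,N)$ equals the number of $\mathcal{O}_N^\times$-conjugacy classes of optimal embeddings of $\Z[\zeta_{2k}]$ into $\mathcal{O}_N$ (for $k=2$ one uses $\Z[i] = \Z[\zeta_4]$, for $k=3$ one uses $\Z[\zeta_6]$, as these are the rings of integers and the relevant discriminants are coprime to $DN$). Classical Eichler-type trace formulas decompose this count into a product over places:
\begin{itemize}
\item At $p \nmid DN$ the local contribution is $1$.
\item At $p \mid D$, the local algebra $B_p$ is a division algebra, and $\mathcal{O}_{N,p}$ is its unique maximal order; an embedding exists iff $\Q_p$ does not split $\Q(\sqrt{-k})$, giving the factor $1 - (-k/p)$.
\item At $\ell \parallel N$, the local Eichler order has exactly two maximal overorders (the two vertices of an edge in the Bruhat--Tits tree), and counting optimal embeddings into the intersection gives $1 + (-k/\ell)$.
\item At $\ell^2 \mid N$, the local Eichler order sits along a longer path in the Bruhat--Tits tree, and the same type of argument yields $\delta_\ell(k)$.
\end{itemize}
Plugging these into the Gauss--Bonnet identity gives the stated formula.

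The main obstacle, and the only part requiring more than bookkeeping, is the local count at primes $\ell$ with $\ell^2 \mid N$: the two-vertex argument used for $\ell \parallel N$ has to be replaced by a walk on the Bruhat--Tits tree of $\mathrm{PGL}_2(\Q_\ell)$, and one must argue that optimal embeddings of $\Z_\ell[\zeta_{2k}]$ pin down both endpoints of the path defining $\mathcal{O}_{N,\ell}$, forcing the count to jump from linear in the split case to the constant $\delta_\ell(k) = 2$ (and to $0$ in the inert/ramified cases). Everything else then follows from Eichler's mass formula, Gauss--Bonnet, and the local--global decomposition of embedding numbers, as worked out in detail in \cite[Thm.~39.4.20]{Voight21}.
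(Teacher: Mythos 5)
Your proof is correct and follows exactly the route the paper points to: the paper offers no proof of its own, citing \cite[Thm. 39.4.20]{Voight21}, whose argument is precisely this combination of Gauss--Bonnet for the cocompact Fuchsian group $\Gamma_0^D(N)$, Eichler's mass formula for the area term $\phi(D)\psi(N)/12$, and the local--global factorization of optimal embedding numbers for the elliptic-point counts (including the Bruhat--Tits tree argument at primes with $\ell^2 \mid N$). The only caution is notational: the proposition's $e_4$ and $e_3$ are indexed by the discriminants $-4$ and $-3$ of $\Z[\zeta_4]$ and $\Z[\zeta_6]$ (so your $e_2$ is the paper's $e_4$), not by $\Z[\zeta_{2k}]$ as your sentence literally reads.
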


\subsection{Atkin--Lehner involutions}\label{AL_section}

Let $B$ be an indefinite quaternion algebra of discriminant $D$ over $\Q$ and let $\mathcal{O}_N$ be an Eichler order of level $N$ coprime to $D$ in $B$. Consider the subgroup of the normalizer of $\mathcal{O}_N$ in $B$
\[N_{{B}^\times_{> 0}}(\mathcal{O}_N) := \{u \in {B}^\times : \text{nrd}(u) > 0 \text{ and } u^{-1} \mathcal{O}_N u = \mathcal{O}_N \} \] 
 consisting of elements of positive reduced norm. The group of \textbf{Atkin--Lehner involutions} of $X_0^D(N)$ is defined as 
\[ W_0(D,N) := N_{{B}^\times_{> 0}}(\mathcal{O}_N) / \mathbb{Q}^\times \mathcal{O}_N^\times \leq \text{Aut}(X_0^D(N)). \]
Letting $\omega(n)$ denote the number of distinct prime divisors of an integer $n$, $W_0(D,N)$ is a finite abelian $2$-group of the form
\[
W_0(D,N) = \{w_m : m \parallel DN\} \cong \left(\Z/2\Z\right)^{\omega(DN)}.
\]
Let us be a bit more concrete about the quaternion arithmetic here in a way that will be useful later, following \cite{Ogg83} which in turn follows Eichler. Because $B$ is indefinite, $\mathcal{O}_N$ has trivial class number and hence each left $\mathcal{O}_N$-ideal $\mathfrak{a}$ is principal; there exists $\alpha \in B^\times$ such that $\mathfrak{a} = \mathcal{O}_N \cdot \alpha$. The group $W_0(D,N)$ is realized as the group of two-sided ideals of $\mathcal{O}_N$ modulo the ideals of the form $x \mathcal{O}_N$ with $x \in \Q^\times$. For a Hall divisor $m \parallel DN$, we obtain the corresponding involution of $X_0^D(N)$ as follows: there exists a two-sided ideal $\mathfrak{b} = \beta \mathcal{O}_N$ with $\mathfrak{b}^2 = m \cdot \mathcal{O}_N$. Letting $\mathfrak{b}_p := \mathfrak{b} \otimes_\Z \Z_p$ denote the completion of $\mathfrak{b}$ at $p$, we have that $\beta$ is determined locally as follows:
\begin{itemize}
    \item $\mathfrak{b}_p = \mathcal{O}_{N,p}$ if $p \nmid m$,
    \item $\mathfrak{b}_p$ is the unique maximal ideal of $\mathcal{O}_{N,p}$ if $p \mid \gcd(m,D)$, and 
    \item $\mathfrak{b}_p$ is associate to the ideal 
    \[ \begin{pmatrix} 0 & 1 \\ p^{\text{ord}_p(N)} & 0 \end{pmatrix} \cdot \psi_p\left(\mathcal{O}_{N,p}\right) \]
    if $p \mid N$. 
\end{itemize}
The element $\beta \in B^\times$ normalizes $\mathcal{O}_N$, yielding the involution $w_m$ of $X_0^D(N)$ over $\Q$. 

For a subgroup $W \leq W_0(D,N)$, we call the quotient $X_0^D(N)/W$ an \textbf{Atkin--Lehner quotient} of $X_0^D(N)$. We also call the elements of $\Aut\left(X_0^D(N)/W\right)$ induced by the elements of $W_0(D,N)$ Atkin--Lehner involutions.  

The fixed points of Atkin--Lehner involutions are determined by Ogg as being complex multiplication (CM) points by certain imaginary quadratic orders.
\begin{proposition}\cite[p. 283]{Ogg83}\label{prop: Ogg_fixed_pts}
    Let $m>1$ with $m \parallel DN$. The fixed points of the Atkin--Lehner involution $w_m$ acting on $X_0^D(N)$ are points with \textnormal{CM} by the following imaginary quadratic orders:
\[   
R = 
     \begin{cases}
       \Z[i] \text{ and } \Z[\sqrt{-2}] &\quad\textnormal{if } m = 2, \\
       \Z\left[\frac{1+\sqrt{-m}}{2}\right]\text{and } \Z[\sqrt{-m}] &\quad\textnormal{if } m \equiv 3 \;(\bmod \; 4), \\
       \Z[\sqrt{-m}] &\quad\textnormal{ otherwise}.\\
     \end{cases}
\]
\end{proposition}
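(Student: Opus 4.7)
The plan is to reduce the fixed-point condition to a constraint on elements of $\mathcal{O}_N$, then classify the imaginary quadratic orders they generate.

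First, a point $[\tau] \in X_0^D(N) = \mathcal{O}_N^\times \backslash \mathcal{H}$ is fixed by $w_m$ if and only if there exists $\alpha \in N_{{B}^\times_{>0}}(\mathcal{O}_N)$ with $\psi_\infty(\alpha)\tau = \tau$ whose class in $W_0(D,N)$ is $w_m$. After scaling by $\Q^\times$ we may assume $\alpha \in \mathcal{O}_N$, and from the ideal-theoretic description above the proposition we obtain $\alpha \mathcal{O}_N = \mathfrak{b}$, $\text{nrd}(\alpha) = m$, and $\alpha^2 \in m \cdot \mathcal{O}_N^\times$. Since only elliptic elements of $\mathrm{GL}_2^+(\R)$ fix points of $\mathcal{H}$, the reduced trace $t \colonequals \text{trd}(\alpha) \in \Z$ satisfies $t^2 < 4m$, so $K \colonequals \Q(\alpha) \subset B$ is an imaginary quadratic field and $[\tau]$ has CM by the order $R \colonequals K \cap \mathcal{O}_N \supseteq \Z[\alpha]$.

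Next I would split on $t$. If $t = 0$, Cayley--Hamilton gives $\alpha^2 = -m$, so $\Z[\alpha] = \Z[\sqrt{-m}]$ and $R$ is an order of $\Q(\sqrt{-m})$ containing $\sqrt{-m}$. This forces $R = \Z[\sqrt{-m}]$ unless $m \equiv 3 \pmod 4$, in which case the maximal order $\Z[(1+\sqrt{-m})/2]$ also contains $\sqrt{-m}$ and gives the second listed order. If $t \neq 0$, combining $\alpha^2 = t\alpha - m$ with $\alpha^2 = mu$ yields $\alpha = m(u+1)/t \in \Q(u)$, so for $\alpha$ noncentral the unit $u \in \mathcal{O}_N^\times$ must be non-rational. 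Taking reduced norms forces $\text{nrd}(u) = 1$ and $t^2 = m \cdot \text{nrd}(u+1) = m(2 + \text{trd}(u))$; together with ellipticity $t^2 < 4m$ this restricts $u$ to a non-trivial root of unity in $\Q(i)$ or $\Q(\sqrt{-3})$ (the hyperbolic case is killed by $t^2 < 0$). Enumerating the finitely many possibilities and imposing the Hall divisor condition $m \parallel DN$ then isolates the exceptional case $m = 2$ with $u = \pm i$ and $\alpha = 1 \pm i \in \Z[i]$, yielding the additional CM order $\Z[i]$; the primitive sixth-root-of-unity configurations either fail the Hall divisor condition or reproduce orders already obtained in the $t = 0$ analysis.

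The main obstacle is the converse direction: verifying that each listed order actually arises as the CM order of some fixed point, and ruling out borderline candidates that survive the numerical case analysis but violate local conditions (for instance, primitive cube-root-of-unity configurations when $N$ has square factors). This is carried out via optimal embedding theory: for each candidate pair $(m, R)$ one produces an optimal embedding $R \hookrightarrow \mathcal{O}_N$ whose image contains an element representing $w_m$, and verifies existence locally at every prime of $DN$ against the three-case description of $\mathfrak{b}_p$ given just above the proposition — trivially at $p \nmid DN$, using that the maximal ideal of $\mathcal{O}_{N,p}$ squares to $p\mathcal{O}_{N,p}$ at $p \mid \gcd(m,D)$, and using the explicit matrix form at $p \mid \gcd(m,N)$. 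This local bookkeeping is the technical core of Eichler's theory that Ogg invokes; the Cayley--Hamilton analysis above slots into it formally.
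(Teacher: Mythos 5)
The paper gives no proof of this proposition --- it is quoted directly from Ogg \cite[p. 283]{Ogg83} --- and your sketch is essentially a correct reconstruction of the Eichler--Ogg argument behind the citation: normalize a representative $\alpha$ of $w_m$ so that $\alpha \mathcal{O}_N = \mathfrak{b}$, $\operatorname{nrd}(\alpha) = m$ and $\alpha^2 \in m\,\mathcal{O}_N^\times$, split on the reduced trace $t$, and dispose of the $t \neq 0$ case via the root-of-unity analysis (the unit $u = \alpha^2/m$ lies in the imaginary quadratic field $\Q(\alpha)$, which is the cleanest way to see it is a root of unity), with realizability and the borderline square-level configurations settled by Eichler's local optimal-embedding theory, exactly as Ogg does. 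One point to tighten: in the $t = 0$ case your claim that $R = \Z[\sqrt{-m}]$ unless $m \equiv 3 \pmod 4$ is automatic only for squarefree $m$; since the statement allows $m \parallel DN$ with $N$ non-squarefree, excluding the larger orders containing $\sqrt{-m}$ (e.g.\ $\Z[i] \supset \Z[2i]$ when $m = 4$) requires invoking the primitivity condition $\alpha\mathcal{O}_N = \mathfrak{b}$ that you already established, rather than maximality of $\Z[\sqrt{-m}]$.
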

For each of the orders $R$ listed for $m$ in this proposition, the count of $R$-CM points fixed by $w_m$ is given explicitly by Ogg as a product
\[
h(R) \prod_{\substack{p \mid \frac{DN}{m} \\ \text{prime}}} \nu_p(R,\mathcal{O}_N),
\]
where $h(R)$ is the class number of the order $R$ and $\nu_p(R,\mathcal{O}_N)$ is the number of inequivalent optimal embeddings of the localization $R_p$ of $R$ at $p$ into the localization $(\mathcal{O}_N)_p$  of $\mathcal{O}_N$ at $p$. The latter optimal embedding counts are given by \cite[Theorem 2]{Ogg83}. One can compute the genus of any Atkin--Lehner quotient using \cref{prop: Ogg_fixed_pts} and \cref{prop: genus_formula} with the Riemann--Hurwitz theorem, and such computations are implemented in the file \texttt{quot{\_}genus.m} in \cite{PSRepo}. 

The following result of Ribet is an important tool in our study. Ribet's original result in \cite{Ribet90} contains a hypothesis that $N$ is squarefree, but this hypothesis can be removed by work of Hijikata--Pizer--Shemanske \cite{HPS89a, HPS89b} and so does not appear here. 

\begin{theorem}\cite{Ribet90, BD96, HPS89a, HPS89b}\label{Ribet_isog}
There is an isogeny defined over $\Q$
\[ \Psi \colon \Jac(X_0(DN))^{D-\new} \longrightarrow \Jac(X_0^D(N)) \]
such that, for each Atkin--Lehner involution $w_m(D,N) \in W_0(D,N)$, the corresponding automorphism $\Psi^*(w_m(D,N))$ on $\Jac(X_0(DN))^{D-\textnormal{new}}$ is
\begin{equation}\label{eqn: AL_action_Ribet}
    \Psi^*(w_m(D,N)) = (-1)^{\omega(\gcd(D,m))}w_m(1,DN),
\end{equation} 
where $w_m(1,DN) \in W_0(1,DN)$ denotes the Atkin--Lehner involution corresponding to $m$ on the modular curve $X_0(DN)$.
\end{theorem}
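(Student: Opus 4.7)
The plan is to construct the isogeny $\Psi$ via the Jacquet--Langlands correspondence applied to weight $2$ automorphic forms, and then to compare the induced Atkin--Lehner actions one prime at a time.

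First, I would invoke the Jacquet--Langlands correspondence to produce a Hecke-equivariant isomorphism between the space of weight $2$ quaternionic modular forms on $\O_N^\times \backslash \Omega$ and the $D$-new subspace $S_2(\Gamma_0(DN))^{D-\new}$, where the equivariance is with respect to the Hecke operators $T_\ell$ for $\ell \nmid DN$. By Eichler--Shimura, these spaces are naturally identified with the cotangent spaces at the origin of the two Jacobians appearing in the statement, and the matching of systems of $T_\ell$-eigenvalues (for $\ell \nmid DN$) on their $\ell$-adic Tate modules forces, via the Faltings isogeny theorem, an isogeny $\Psi$ of the corresponding abelian varieties defined over $\Q$.

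Next I would analyze the Atkin--Lehner actions under $\Psi^*$ locally at each prime $p \mid DN$. For $p \mid N$ (so $p \nmid D$), the local component of each involution is obtained by conjugation by the same matrix in $M_2(\Q_p)$ (using the fixed splitting $\psi_p$ on the quaternionic side), and the local Jacquet--Langlands correspondence at $p$ acts as the identity on Hecke data, yielding $\Psi^*(w_p(D,N)) = w_p(1,DN)$. For $p \mid D$, the order $\O_{N,p}$ is the maximal order of the division algebra $B_p$, the local representation attached on the $\GL_2$ side to a $D$-newform is a twist of the Steinberg representation, and the local Jacquet--Langlands identification of these representations introduces a sign flip in the action of the normalizer of the maximal order; this gives $\Psi^*(w_p(D,N)) = -w_p(1,DN)$. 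Multiplicativity of the Atkin--Lehner involutions, $w_m = \prod_{p \mid m} w_p$ on both sides, then assembles these local comparisons into the global sign $(-1)^{\omega(\gcd(D,m))}$ appearing in the statement.

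The main obstacle is extending the argument from Ribet's original setting, where $N$ is squarefree, to arbitrary $N$ coprime to $D$. When $N$ is squarefree, the $D$-new subspace and the local Atkin--Lehner eigenvalues on newforms are standard, and Ribet's trace identity directly matches the newspace Hecke algebras so that the local sign computation above goes through. For general $N$, one needs both a $D$-new decomposition of $S_2(\Gamma_0(DN))$ that is compatible with the Jacquet--Langlands transfer and the correct local JL identification at primes of $D$ in the presence of higher powers of other primes in the level; both ingredients are supplied by the trace formula comparisons of Hijikata--Pizer--Shemanske. Feeding these results into Ribet's construction preserves the local sign analysis at each $p \mid D$ without modification, and therefore yields the stated formula for $\Psi^*(w_m(D,N))$ in the full generality claimed.
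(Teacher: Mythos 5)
The paper does not actually prove this statement: it is imported from Ribet and Bertolini--Darmon, with the removal of the squarefree hypothesis on $N$ credited to Hijikata--Pizer--Shemanske. Measured against the arguments in those references, your outline has the right skeleton (Jacquet--Langlands plus Faltings for the existence of $\Psi$, a local analysis at the primes dividing $D$ for the sign), but two steps are genuinely incomplete. First, Faltings' theorem only yields an isogeny compatible with the Galois action, hence with the Hecke operators $T_\ell$ for $\ell \nmid DN$; it does not by itself produce one intertwining the Atkin--Lehner involutions. Reducing the operator identity to a comparison of local eigenvalues requires decomposing both Jacobians up to isogeny into $f$-isotypic pieces. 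On pieces attached to newforms of level exactly $DN$ both involutions act as scalars and the reduction works, but the $D$-new part also contains components attached to newforms of level $DM$ with $M$ a proper divisor of $N$, occurring with multiplicity greater than one, and on the multiplicity space $w_m(1,DN)$ for $m \mid N$ is \emph{not} a scalar. Your ``assemble the local comparisons'' step therefore does not go through as stated -- this already fails for squarefree composite $N$, and it is exactly where Ribet's more canonical construction of $\Psi$ (via {\v{C}}erednik--Drinfeld character groups rather than a bare appeal to Faltings) and the Hijikata--Pizer--Shemanske trace identities do real work.

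Second, the sign $-1$ at each $p \mid D$ is the entire content of the formula, and you assert it rather than compute it. What must be checked is: for a $D$-new form the local component at $p$ is an unramified quadratic twist $\mathrm{St} \otimes \chi$ of Steinberg; the new vector is an Atkin--Lehner eigenvector with eigenvalue $-\chi(p)$; the Jacquet--Langlands transfer to $B_p^\times$ is the character $\chi \circ \mathrm{nrd}$, on which the normalizing element of reduced norm $p$ (which is how $w_p(D,N)$ is realized locally in \cref{AL_section}) acts by $+\chi(p)$. Without pinning down both normalizations the sign could just as well come out $+1$, so this computation cannot be waved through as ``the local JL identification introduces a sign flip.'' Your remarks about $p \mid N$ and about the role of Hijikata--Pizer--Shemanske for non-squarefree $N$ are correct as far as they go.
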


\subsection{{\v{C}}erednik--Drinfeld uniformizations}\label{dual_graphs_section}

For the remainder of this section, we fix $D>1$ the discriminant of an indefinite quaternion algebra $B$ over $\Q$, $N$ a squarefree positive integer with $\gcd(D,N) = 1$, and $p$ a prime divisior of $D$. We recall the main details of the theory of the $p$-adic uniformization of $X_0^D(N)$ and its Atkin--Lehner quotients at primes $p \mid D$ due to {\v{C}}erednik--Drinfeld. The original sources for these results are \cite{Cerednik, Drinfeld}, while our main references for this material are \cite{Kurihara, Ogg83}, \cite[Chapter 3]{BC91}, and \cite[\S 1.7]{BD96}.

For clarity, let us first recall the notion of a graph (with lengths) that we will use, and related constructions, as given in \cite[\S 3]{Kurihara}.
\begin{definition}
A \textbf{graph} $X$ consists of the following data:
\begin{itemize}
    \item a set $\textnormal{Ver}(X)$ of \textbf{vertices} of $X$,
    \item a set $\textnormal{Ed}(X)$ of \textbf{oriented edges} of $X$, 
    \item functions $o, t : \textnormal{Ed}(X) 
    \rightarrow \textnormal{Ver}(X)$, and 
    \item a function 
    \begin{align*} 
    \textnormal{Ed}(X) &\longrightarrow \textnormal{Ed}(X) \\
    y &\longmapsto \overline{y} 
    \end{align*}
    such that $\overline{\overline{y}} = y$ and $o(y) = t(\overline{y})$ for all $y \in \textnormal{Ed}(X)$.
\end{itemize}
For $y \in \textnormal{Ed}(X)$, we call $o(y)$ and $t(y)$ the \textbf{origin} and \textbf{terminal} vertices of $y$, respectively, and we call the oriented edge $\overline{y}$ the \textbf{inverse} of $y$. An \textbf{(unoriented) edge} is a set $\{y,\overline{y}\}$ with $y \in \textnormal{Ed}(X)$. If $\overline{y} = y$, we will refer to both $y$ and $\{y\}$ as a \textbf{half-edge}. 

A \textbf{graph with lengths} is a pair $(X,f)$ such that $X$ is as above and $f$ is a function from $\textnormal{Ed}(X)$ to the set of positive integers such that $f(y) = f(\overline{y})$ for each $y \in \textnormal{Ed}(X)$. We call $f(y)$ the \textbf{length} of the edge $\{y,\overline{y}\}$.
\end{definition}

Note that if $y$ is a half edge, then necessarily $o(y) = t(y)$. In drawing graphs (with lengths), we will distinguish between a half-edge $y$ and an edge $z$ which is not a half edge but satisfies $o(z) = t(z)$ by drawing the former as a straight segment emanating from $o(y)$ and drawing the latter as a loop at $o(z)$, as in \cref{figure: half_edge_fig}.

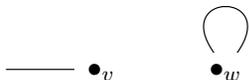
\begin{figure}[h]
    {\centering
    \caption{A vertex $v$ which is the origin of a single half-edge, and a vertex $w$ which is the origin of an oriented edge $z$ with $\overline{z} \neq z$.}\label{figure: half_edge_fig}
 \begin{tikzcd}
	{} & {\bullet_v} & {\bullet_w}
	\arrow[no head, from=1-2, to=1-1]
	\arrow[no head, from=1-3, to=1-3, loop, in=55, out=125, distance=10mm]
\end{tikzcd}}
\end{figure}

If $X$ is a graph, then we denote by $X^*$ the graph obtained from $X$ by removing all half-edges. If $(X,f)$ is a graph with lengths, then we similarly get a graph with lengths $(X^*,f|_{\text{Ed}(X^*})$ via removal of half-edges. 

If $X$ is a graph and $H$ is a group acting on $X$, then we form the \textbf{quotient graph} $X/H$ in the usual way via identification of vertices and oriented edges under the action of $H$. Moreover, if $(X,f)$ is a graph with lengths then we let
\[ H_y := \{h \in H \mid h\cdot y = y\} \]
denote the stabilizer in $H$ of a fixed oriented edge $y$ of $X$ and we have the \textbf{quotient graph with lengths} $(X/H,f_H)$ via 
\[ f_H(y) := f(y) \cdot \#H_y. \]

Given a graph $X$, we construct its \textbf{dual graph} $X^\vee$ as follows: the vertex set of $X^\vee$ is in bijection with the set of (unoriented) edges of $X$, and there will be an edge between two vertices of $X^\vee$ corresponding to edges $e_1$ and $e_2$ of $X$ for each vertex in $X$ that both $e_1$ and $e_2$ are incident to. 

For a graph with lengths $(X,f)$, we construct a graph $\widetilde{X}$, which we call the \textbf{resolution of $X$}, by replacing every edge $y$ of $X$ having length $f(y) > 1$ with a chain of $f(y)$ edges of length $1$ (see \cref{figure: resolution}) via introduction of $f(y)-1$ new vertices.

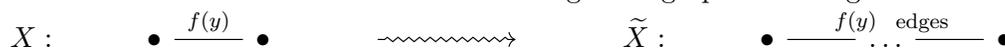
\begin{figure}[h]
    {\centering
    \caption{The resolution of an edge in a graph with lengths}
    \label{figure: resolution}
    \begin{tikzcd}
	{X:} & \bullet & \bullet & {} && {} & {\widetilde{X}:} & \bullet & \ldots & \bullet
	\arrow["{{f(y)}}", no head, from=1-2, to=1-3]
	\arrow[squiggly, from=1-4, to=1-6]
	\arrow["{{f(y)}}"{pos=1}, no head, from=1-8, to=1-9]
	\arrow["{{\text{edges}}}"'{pos=0.9}, no head, from=1-10, to=1-9]
\end{tikzcd}}
\end{figure}

Lastly, for a graph $X$ we construct a graph $X^\text{min}$, which we call the \textbf{minimization of $X$}, by taking $X^*$ (via removing all half-edges from $X$) and then iteratively removing from $X^*$ any edge which is the unique edge incident to a given vertex until no such edges remain. See \cref{figure: minimization} for an example. 

\begin{figure}[h]
    {\centering
    \caption{The minimization of a graph $X$}
    \label{figure: minimization}
    \begin{tikzcd}
	{X:} & \bullet & \bullet & \bullet & {} \\
	{X^\text{min}:} & \bullet & \bullet
	\arrow[no head, from=1-3, to=1-2]
	\arrow[curve={height=-6pt}, no head, from=1-4, to=1-3]
	\arrow[curve={height=6pt}, no head, from=1-4, to=1-3]
	\arrow[shift right, no head, from=1-4, to=1-5]
	\arrow[shift left, no head, from=1-4, to=1-5]
	\arrow[curve={height=-6pt}, no head, from=2-2, to=2-3]
	\arrow[curve={height=6pt}, no head, from=2-2, to=2-3]
\end{tikzcd}}
\end{figure}
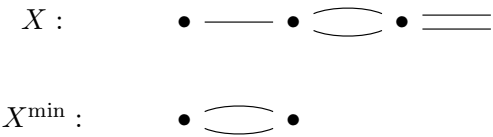

Let $X = X_0^D(N)$, let $\widehat{B}$ and $\widehat{D} = D/p$ denote, respectively, the definite quaternion algebra over $\Q$ which is ramified at the same places as $B$ aside from $p$ and the discriminant of $\widehat{B}$, and let $\widehat{\mathcal{O}}_N$ denote an Eichler order of level $N$ in $\widehat{B}$. Set $h = h\left( \widehat{\mathcal{O}}_N \right)$ as the class number of this order, let $\mathfrak{a}_1, \ldots, \mathfrak{a}_h$ be representative left $\widehat{\mathcal{O}}_N$-ideals of each equivalence class, and let $\widehat{\mathcal{O}}_i$ be the right order corresponding to $\mathfrak{a}_i$ for each $1 \leq i \leq h$. Each $\widehat{\mathcal{O}}_i$ is then an Eichler order of level $N$ in $\widehat{B}$.

We let $\Delta$ denote the Bruhat-Tits tree associated to $\PGL_2(\Q_p)$, which in this setting we realize as having vertex set in correspondence with normalized left $\widehat{\mathcal{O}}_N$-ideals up to homothety. If $x$ is a vertex in $\Delta$ and $\mathfrak{a}_x$ a corresponding left ideal, we let $\widehat{\mathcal{O}}_x$ denote the right order corresponding to $\mathfrak{a}_x$. We have an oriented edge $y$ from a vertex $x_1$ to a vertex $x_2$ if there is a containment $\mathfrak{a}_2 \subseteq \mathfrak{a}_1$ of index $p^2$, in which case the inverse $\overline{y}$ is the edge corresponding to the containment $p\mathfrak{a}_1 \subseteq \mathfrak{a}_2$. In this way, each vertex has $p+1$ neighbor vertices. 

Let 
\[ \widehat{\mathcal{O}}_{N}^{(p)} := \widehat{\mathcal{O}}_N \otimes_\Z \Z[1/p], \]
and consider the quotient group
\[ \Gamma_p := \Z[1/p]^\times \backslash {{\widehat{\mathcal{O}}}_{N}^{(p)}}{^\times} \]
and its index $2$ subgroup
\[ \Gamma := \{\alpha \in \Gamma_p \mid \ord_p(\text{nrd}(\alpha)) \text{ is even}\}. \]
Both of these groups act on $\Delta$ via the action on ideals, and we consider the quotient graphs
\[ G_p := \Delta/\Gamma_p \quad \text{ and } \quad G := \Delta/\Gamma, \]
with $h$ and $2h$ vertices, respectively. Furthermore, we consider each of $G_p$ and $G$ as graphs with lengths by taking $\Delta$ to be a graph with all lengths equal to $1$. That is, the length $f(y)$ of an edge $y$ in $G_p$ or in $G$ is the size of its stabilizer in $\Gamma_p$ or in $\Gamma$, respectively. The stabilizer $\widehat{\mathcal{O}}_x^\times/\{\pm 1\}$ of a vertex $x$ in either of theses graphs has size $f(x) \in \{1, 2, 3, 6, 12\}$, and the stabilizer of an edge is cyclic of order at most $3$ \cite[p. 201]{Ogg83}. For any vertex $x \in G$, an orbit--stabilizer calculation gives
\[ \sum_{\substack{\text{edges } y \\ \text{with } o(y) = x}} \dfrac{f(x)}{f(y)} = p+1.\]

We now describe an action of $W_0(D,N)$, which for expositional purposes we recognize as a product
\[ W_0(D,N) = \widehat{W} \times \langle w_p \rangle \]
with $\widehat{W} = \{w_m \in W_0(D,N) \mid p \nmid m\}$, on $G$. For $w_m \in \widehat{W}$ this is clear -- we take $w_m$ to be given by the action of an element of norm $m$ in $\widehat{O}_N$. The involution $w_p$ is then given by the action of any element of $\Gamma_p$ which is not in $\Gamma$. In this way, we recognize the covering $G \to G_p$ as the bipartite covering of graphs with lengths with respect to the action of $w_p$. This is to say, the $2h$ vertices of $G$ can be partitioned into two sets $V_+ = \{x_{1,+}, \ldots, x_{h,+}\}$ and $V_- = \{x_{1,-}, \ldots, x_{h,-}\}$, where each of $V_+$ and $V_-$ give a set of representatives for the $h$ vertices in $G_p$, such that $w_p(x_{i,+}) = w_p(x_{i,-})$ for each $1 \leq i \leq h$. We have the following useful observations:
\begin{itemize}
    \item Each vertex in $V_+$ is an odd distance from each other vertex in $V_+$, and is an even distance from each vertex in $V_-$. It follows that $G$ contains no half-edges. 
    \item If $w_m \in \widehat{W}$, then $w_m$ permutes both $V_+$ and $V_-$. 
    \item If $p \mid m$, then $w_m$ sends each vertex in $V_+$ to a vertex in $V_-$ (and vice-versa). 
\end{itemize}

Let $W \leq W_0(D,N)$ be an Atkin--Lehner subgroup. Then from the action defined above we get an associated graph with lengths\footnote{We urge the reader to note that the information of $D, N,$ and $p$ are fixed at the start and are not inherent in the notation $G_W$ for these graphs. The amount of decoration we will add to this notation is enough that we believe this to be best, and we will clearly fix these quantities in all examples in order to avoid confusion.} $G_W := G/W$. In particular, we have $G = G_{\{\text{Id}\}}$ and $G_p = G_{\langle w_p \rangle}$. 

We are now prepared to state the main result of this section, as stated in \cite[p. 202]{Ogg85} and \cite[Theorem 1.3, Corollary 1.4]{BD96}.

\begin{theorem}[{\v{C}}erednik--Drinfeld]\label{theorem: CD_uniformization}
Let $D$ be the discriminant of an indefinite quaternion algebra over $\Q$, let $N$ be a squarefree positive integer with $\gcd(D,N) = 1$, and let $p \mid D$ be a prime. Let $W \leq W_0(D,N)$ be an Atkin--Lehner subgroup, and let $G_W$ be as defined above. Then the Atkin--Lehner quotient $X_0^D(N)/W$ over $\Z_p$ is a Mumford curve (c.f. \cite{Mumford}), and the dual graph of its special fiber is given by $\widetilde{G_W^*}$ with $w_p$ inducing the Frobenius automorphism. 
\end{theorem}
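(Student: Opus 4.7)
The plan is to bootstrap from the classical {\v{C}}erednik--Drinfeld theorem for $X_0^D(N)$ itself, which is the $W = \{\text{Id}\}$ case of the statement, and then push the identification through the Atkin--Lehner quotient using compatibility of the uniformization with the $W_0(D,N)$-action. For the base case, the theorem of {\v{C}}erednik--Drinfeld gives that $X_0^D(N)/\Z_p$ is a Mumford curve whose rigid-analytic uniformization is realized by the action of $\Gamma_p$ on Drinfeld's $p$-adic upper half plane; the dual graph of the special fiber, together with edge lengths encoding thickness of formal neighborhoods of nodes, is precisely the quotient $\widetilde{G^*}$ obtained from the Bruhat--Tits tree $\Delta/\Gamma$. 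In the squarefree $N$ case this is exactly the content of \cite{Cerednik, Drinfeld} as formulated in \cite[Ch.~3]{BC91} and reviewed in \cite[\S 1.7]{BD96}; the Frobenius identification with $w_p$ is the standard consequence of passing between $\Gamma_p$ and its index-two subgroup $\Gamma$.

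Next, I would show that the Atkin--Lehner action defined above on the graph $G$ matches, vertex-by-vertex and edge-by-edge, the action of $W_0(D,N)$ on the dual graph of the special fiber of $X_0^D(N)/\Z_p$ induced by the $\Q$-rational automorphisms $w_m$. Concretely, each $w_m$ with $p \nmid m$ is realized by an element of the definite quaternion order $\widehat{\mathcal{O}}_N$ normalizing it, and this element acts through its image in $\Gamma$; the involution $w_p$ comes from any element of $\Gamma_p \setminus \Gamma$, which is exactly how $\Gamma_p/\Gamma$ implements the Frobenius / bipartite exchange. Once this compatibility is established, functoriality of Mumford-curve uniformizations implies that the special fiber of $X_0^D(N)/W$ is again a Mumford curve whose dual graph with lengths is the quotient $G_W = G/W$, with $w_p$ still realizing Frobenius when $w_p \notin W$ (and, if $w_p \in W$, then its image in the quotient is trivial, consistent with Frobenius being trivial on a split-multiplicative reduction).

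The remaining corrections all live on the graph-theoretic side. Replacing $G_W$ by its resolution $\widetilde{G_W}$ corresponds to blowing up nodes whose formal-neighborhood thickness exceeds $1$ into chains of rational curves, which must happen precisely at edges stabilized by torsion elements of $W$ acting on $\Delta$; the length $f_W(y) = f(y)\cdot \#W_y$ records the correct thickness. Removing half-edges to pass from $G_W$ to $G_W^*$ accounts for oriented edges identified with their reverses: such a pairing arises only from an involution in $W$ that swaps $V_+$ and $V_-$ while fixing the underlying edge, and the resulting fixed point on the rigid side contributes a regular, non-nodal point on the special fiber rather than a node, so it should not appear as an edge of the dual graph.

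The main obstacle is the bookkeeping in the third step: one must verify carefully that the stabilizer structure in $\Gamma$ at a fixed edge $y$ combines with a stabilizer from $W$ at the same edge to give exactly the claimed length $f(y)\cdot \#W_y$, and that the half-edge phenomenon accounts for all and only the non-node contributions. This requires examining the local structure of the $\Gamma_p \rtimes W$-action on $\Delta$ at each type of edge (trivially, cyclically-$2$, or cyclically-$3$ stabilized), and matching it against the local description of $X_0^D(N)/W$ over $\Z_p$ given by Drinfeld's moduli interpretation of the $p$-adic upper half plane; this local analysis is the core computation, and everything else is essentially transport of structure from \cref{theorem: CD_uniformization} in the $W = \{\text{Id}\}$ case already recorded in \cite{BC91, BD96}.
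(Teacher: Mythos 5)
The paper does not prove \cref{theorem: CD_uniformization} at all: it is stated as a known result, imported verbatim from \cite[p.\ 202]{Ogg85} and \cite[Theorem 1.3, Corollary 1.4]{BD96}, so there is no in-paper argument to compare yours against. What you have written is essentially a reconstruction of how those sources establish the result, and as a roadmap it is sound: the base case is classical {\v{C}}erednik--Drinfeld via the action of $\Gamma_p$ on Drinfeld's upper half-plane, the Atkin--Lehner operators are matched with elements of the definite order (for $p \nmid m$) and with $\Gamma_p \setminus \Gamma$ (for $w_p$), and the quotient is analyzed edge-by-edge. Your observation that an orientation-reversing involution turns the node $xy = p^n$ into a smooth point of the quotient (so half-edges must be deleted from the dual graph) is correct and is exactly the reason for passing to $G_W^*$.

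The one place where your sketch leans on something that is not actually a theorem is the appeal to ``functoriality of Mumford-curve uniformizations'' to conclude that $X_0^D(N)/W$ over $\Z_p$ is again a Mumford curve with dual graph $G_W$ and lengths $f(y)\cdot \#W_y$. The quotient of a semistable (or admissible) model by a finite group is not automatically semistable or regular; one must verify locally at each node that the quotient of $xy = p^{f(y)}$ by the stabilizer $W_y$ is again a node of the correct thickness (when $W_y$ preserves orientation) or a smooth point (when it reverses orientation), and likewise that the quotient is still regular away from the nodes. You correctly flag this local analysis as ``the core computation,'' but it is precisely the content of \cite[\S 1.7]{BD96} and of Jordan--Livn\'e's work on admissible curves rather than a formal consequence of the $W = \{\mathrm{Id}\}$ case; a complete write-up would need to carry it out for each of the three possible edge-stabilizer types. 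Two cosmetic points: the theorem's dual graph is $\widetilde{G_W^*}$, i.e.\ half-edges are removed \emph{before} resolving (resolving a half-edge of length greater than one would introduce spurious vertices), and the edge lengths in $G$ already come from stabilizers in $\Gamma$, so the resolution step is needed even before any $W$-quotient is taken --- your phrasing attributes the thickness entirely to torsion in $W$, which is not quite right even though your displayed formula $f(y)\cdot\#W_y$ is.
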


Computations of the graphs $G_W$ are achievable using Magma \cite{Magma}, with functionality for the requisite quaternion arithmetic provided in part by the Brandt Modules package by Kohel. Code to compute the data of the graph $G_p$ and the Atkin--Lehner action on $G_p$ was developed by Stankewicz for \cite{Stankewicz} and is available at \cite{StankewiczCode}. We extend this implementation to compute the data of $G_W$ for a general $W \leq W_0(D,N)$ in the file \texttt{dual{\_}graphs.m} in \cite{PSRepo}.


\section{Reduction types and local obstructions}\label{Kodaira_section}

In this section, we describe our implementation of two algorithms which use as a basis the implementation of the theory in \cref{dual_graphs_section}. The first regards reduction types of genus $1$ Atkin--Lehner quotients.

\begin{algorithm}[Kodaira symbols at primes $p \mid D$]\label{algorithm: Kodaira_alg}\phantom{} \\

\textnormal{\textbf{Input:}} A tuple $(D,N,S)$ where 
\begin{itemize}
    \item $D$ is the discriminant of an indefinite quaternion algebra over $\Q$, 
    \item $N$ is a squarefree positive integer with $\gcd(D,N) = 1$, and
    \item $S$ is a set of generators for a subgroup $W \leq W_0(D,N)$ such that the quotient $X_0^D(N)/W$ has genus $1$.
\end{itemize}

\textnormal{\textbf{Output:}} The set of all Kodaira symbols of $\Jac(X_0^D(N)/W)$ at primes $p \mid D$. 
\end{algorithm}

By \cref{theorem: CD_uniformization}, we know that the Kodaira symbol $\Jac(X_0^D(N)/W)$ at a prime $p \mid D$ is $I_n$ for some $n \in \Z^+$. For each fixed $p \mid D$, the steps of \cref{algorithm: Kodaira_alg} to compute the Kodaira symbol at $p$ are as follows:
\begin{enumerate}
    \item Using the material of \cref{dual_graphs_section}, we compute the edge set $E_W$ of $G_W$, along with the action of Atkin--Lehner operators on these edges and the stabilizer orders of each edge. If $e$ is an edge in the edge set $E$ of $G$ and 
    \[ [e] = \{w(e) \mid w \in W\} \]
    is its image in $E_W$, represented as a $W$-orbit of edges from $E$, then the size of the stabilizer of $[e]$ is
    \[ \#\textnormal{Stab}([e]) = \#\textnormal{Stab}(e) \cdot \dfrac{\#W}{\#[e]}. \]
    
    \item We remove half-edges from $E_W$, and we  then remove from $E_W$ any edge which is the unique edge incident to either its origin or terminal vertex in order to obtain $G_W^\text{min}$. We repeat the latter step until no such edges remain, such that our graph is confirmed to be minimal. Let $\widetilde{E_W}$ denote the resulting edge set. 
    
    \item Following step (2), we need only resolve edges in $\widetilde{E_W}$ to get the dual graph of the minimal regular model of $\Jac(X_0^D(N)/W)$ over $\Z_p$. Otherwise put, our Kodaira symbol at $p$ for this Jacobian is $I_n$, where
    \[ n := \sum_{[e] \in \widetilde{E_W}} \#\textnormal{Stab}([e]). \]
\end{enumerate}

\begin{remark}\label{Kodaira_alg_gne1_remark}
If the genus of $X = X_0^D(N)/W$ is greater than $1$, then the steps outlined above still serve to yield the reduction type of $X$ at any prime $p \mid D$. Of course, the possible reduction types for higher genus are more varied, so we leave the statement of \cref{algorithm: Kodaira_alg} as it is for simplicity. We will mainly use the above procedure in the genus $1$ case, but in \cref{genus_2_bielliptics_section} we will briefly consider reduction types in the genus $2$ case.
\end{remark}

Our second algorithm regards the existence of local points at primes dividing $D$. Jordan--Livn\'e \cite[Theorems 5.1, 5.4, 5.6]{JordanLivne} proved results on the existence of $\Q_p$ points on $X_0^D(1)$ for $p \mid D$ , while Ogg \cite[Th\'eor\`eme]{Ogg85} extended these results to Atkin--Lehner quotients $X_0^D(N)/W$ with $\#W \leq 2$. Stankewicz \cite{Stankewicz} also proved quite general results on local points on twists of the Shimura curves $X_0^D(N)$. 

In the works of Ogg and of Jordan--Livne, the authors crucially use Hensel's lemma applied to the situation at hand (see \cite[Lemma 1.1]{JordanLivne}: for a Mumford curve $X$ over $\Z_p$, this states that $X$ has a point over $\Q_p$ if and only if the resolution of the special fiber of $X$ has a smooth point over $\F_p$. We translate this equivalence in our situation using \cref{theorem: CD_uniformization}: fixing a prime $p \mid D$, the quotient $X_0^D(N)/W$ has a point over $\Q_p$ if and only if $\widetilde{G_W^\text{min}}$ has a vertex $x$ so that  
\begin{itemize}
    \item $w_p(x) = x$ (such that $x$ corresponds to a component of the special fiber defined over $\F_p$), and so that 
    \item there are strictly less than $p+1$ oriented edges $y$ with $o(y) = x$ and with $w_p(y) = \overline{y}$ (such that the component corresponding to $x$ contains a smooth point).
\end{itemize}
Using this criterion for local points with our previously described computations of the graphs $\widetilde{G_W}^\text{min}$, we have an algorithmic version of the results of Jordan--Livne and Ogg generalized to arbitrary Atkin--Lehner quotients of $X_0^D(N)/W$ with $N$ squarefree. 

\begin{algorithm}[Local points at primes $p \mid D$]\label{algorithm: local_points}\phantom{} \\

\textnormal{\textbf{Input:}} A tuple $(D,N,S,p)$ where 
\begin{itemize}
    \item $D$ is the discriminant of an indefinite quaternion algebra over $\Q$, 
    \item $N$ is a squarefree positive integer with $\gcd(D,N) = 1$, 
    \item $S$ is a set of generators for a subgroup $W \leq W_0(D,N)$, and
    \item $p \mid D$ is a prime. 
\end{itemize}

\textnormal{\textbf{Output:}} A boolean \texttt{true} if $\left(X_0^D(N)/W\right)(\Q_p) \neq \emptyset$ and \texttt{false} otherwise. 
\end{algorithm}


\section{Atkin--Lehner quotients of genus at most $2$}\label{genus_le_2_section}

\subsection{Proof of \cref{theorem: genus_le_2}}\label{determining_genus_le2_subsection}
We determine here all Atkin--Lehner quotients $X_0^D(N)/W$, with $\gcd(D,N) = 1$ and $D>1$, having genus at most $1$. 

To begin, we recall two results that will allow us restrict attention to a finite list of pairs $(D,N)$. First is a lower bound on the geometric gonality $\gamma_{\C}(X_0^D(N))$ of $X_0^D(N)$, which is a special case of a more general theorem of Abramovich (where we have improved the constant appearing using the best known result on Selberg's eigenvalue conjecture \cite[p. 176]{K03}). 

\begin{theorem}{\cite[Theorem 1.1]{Abramovich96}} \label{theorem: Abr}
For the Shimura curve $X_0^D(N)$, we have
\[
\gamma_{\C}(X_0^D(N)) \ge \frac{975}{8192}\big(g(X_0^D(N))-1\big). 
\]
\end{theorem}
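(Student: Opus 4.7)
The statement is attributed to Abramovich, with a better numerical constant obtained by plugging in the strongest currently available bound towards Selberg's eigenvalue conjecture in place of Selberg's original $3/16$. My proof plan therefore follows Abramovich's original strategy and updates only the final ingredient. Overall, the plan is to combine the Li--Yau type lower bound on the gonality of a compact hyperbolic Riemann surface with (i) a Gauss--Bonnet area computation for $X_0^D(N)$ and (ii) the Kim--Sarnak lower bound on $\lambda_1$ for arithmetic congruence quotients.

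First I would recall the Li--Yau / Yang inequality for gonality: if $X$ is a compact hyperbolic Riemann surface equipped with its metric of constant curvature $-1$ and $\phi \colon X \to \P^1_\C$ is a non-constant morphism of degree $d$, then
\[
d \;\geq\; \frac{\lambda_1(X) \cdot \mathrm{Area}(X)}{8\pi},
\]
where $\lambda_1(X)$ is the smallest positive eigenvalue of the Laplace--Beltrami operator. This is the core analytic input; it is proved by pulling back suitable test functions from $\P^1$ through $\phi$, using a Möbius balancing argument to arrange orthogonality to the constants on $X$, and then invoking the variational characterization of $\lambda_1$. Taking the infimum over all $\phi$ converts the left-hand side to $\gamma_\C(X)$.

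Next I would apply Gauss--Bonnet to $X_0^D(N)$. Since this is a compact hyperbolic orbifold of genus $g$, its hyperbolic area equals $4\pi(g-1)$ up to a positive correction coming from elliptic fixed points (which only strengthens the inequality). Substituting into the Li--Yau bound gives
\[
\gamma_\C(X_0^D(N)) \;\geq\; \frac{\lambda_1(X_0^D(N))}{2}\,(g-1).
\]

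Finally, since $X_0^D(N)$ is uniformized by the arithmetic congruence group $\mathcal{O}_N^\times / \{\pm 1\}$ coming from an Eichler order in an indefinite quaternion algebra over $\Q$, the Jacquet--Langlands correspondence transfers its cuspidal Laplace spectrum to that of classical newforms on $\GL_2/\Q$. The best known progress towards the Ramanujan conjecture, due to Kim with an appendix by Sarnak \cite{K03}, gives the Satake bound $|\alpha_p| \leq p^{7/64}$, which translates into
\[
\lambda_1(X_0^D(N)) \;\geq\; \tfrac{1}{4} - (7/64)^2 \;=\; \tfrac{975}{4096}.
\]
Inserting this inequality into the previous display produces the claimed constant $\tfrac{975}{8192}$. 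The single genuinely hard ingredient is this Selberg-type bound, which I would quote from \cite{K03} rather than re-derive, since its proof uses functoriality of symmetric power lifts and delicate analytic properties of the associated $L$-functions; by contrast, the Li--Yau step and the area calculation are standard and brief, so the \emph{main obstacle} is entirely external to the geometry of Shimura curves and is absorbed by citing the automorphic input.
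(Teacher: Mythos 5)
Your proposal is correct and matches exactly what the paper does: the paper simply cites Abramovich's theorem and notes that the constant is improved by substituting the Kim--Sarnak bound $\lambda_1 \geq \tfrac{1}{4} - (7/64)^2 = \tfrac{975}{4096}$ from \cite{K03} into the Li--Yau/Gauss--Bonnet argument, which is precisely the chain you reconstruct. The arithmetic checks out ($\tfrac{975}{4096}\cdot\tfrac{1}{2} = \tfrac{975}{8192}$), and your remarks about the elliptic-point correction only strengthening the inequality and about Jacquet--Langlands transferring the spectral bound to the quaternionic setting are the right justifications.
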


Next, we have an explicit bound on the genus of $X_0^D(N)$ in terms of the product $DN$. 

\begin{lemma}\cite[Lemma 10.6]{Saia24}\label{Lemma: Saia_genus_bound}
Let $\gamma$ denote the Euler--Mascheroni constant. For $D>1$ an indefinite rational quaternion discriminant and $N$ a positive integer coprime to $D$, we have
\[
g(X_0^D(N)) > 1 + \frac{DN}{12}\left( \frac{1}{e^\gamma \log\log(DN) + \frac{3}{\log\log{6}}} \right)- \frac{7\sqrt{DN}}{3}.
\]
\end{lemma}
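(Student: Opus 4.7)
My plan is to apply the genus formula of \cref{prop: genus_formula}
\[
g(X_0^D(N)) = 1 + \frac{\phi(D)\psi(N)}{12} - \frac{e_4(D,N)}{4} - \frac{e_3(D,N)}{3}
\]
term-by-term, producing a sharp lower bound on the positive contribution $\phi(D)\psi(N)$ and a sufficiently generous upper bound on the elliptic point counts $e_3(D,N), e_4(D,N)$.

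For the main term, the coprimality of $D$ and $N$ together with the inequality $1+1/p > 1-1/p$ gives
\[
\phi(D)\psi(N) = DN\prod_{p \mid D}\bigl(1-\tfrac{1}{p}\bigr)\prod_{p \mid N}\bigl(1+\tfrac{1}{p}\bigr) \geq DN\prod_{p\mid DN}\bigl(1-\tfrac{1}{p}\bigr) = \phi(DN).
\]
I would then invoke the classical Rosser--Schoenfeld bound (in Nicolas' sharpened form), which states that for every $n \geq 3$,
\[
\phi(n) > \frac{n}{e^\gamma \log\log n + 3/\log\log n}.
\]
Since $D \geq 6$ for any indefinite rational quaternion discriminant with $D > 1$, one has $DN \geq 6$; combined with the fact that $3/\log\log n$ is decreasing for $n > e$, this implies $3/\log\log(DN) \leq 3/\log\log 6$. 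Substitution yields the desired uniform lower bound
\[
\frac{\phi(D)\psi(N)}{12} > \frac{DN}{12\bigl(e^\gamma \log\log(DN) + 3/\log\log 6\bigr)}.
\]

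For the correction terms, every local factor in the definition of $e_k(D,N)$ is bounded above by $2$, so $e_3(D,N), e_4(D,N) \leq 2^{\omega(DN)}$ (using $\gcd(D,N) = 1$). I would then use the elementary bound $2^{\omega(n)} \leq 2\sqrt{n}$ for $n \geq 2$, verified by observing that the ratio $2^{\omega(n)}/\sqrt{n}$ depends only on $\rad(n)$ and is therefore maximized when $n$ is a primorial, at which point a direct check on the first few primorials $2,\, 6,\, 30,\, 210,\, \ldots$ confirms the inequality. This yields
\[
\frac{e_4(D,N)}{4} + \frac{e_3(D,N)}{3} \leq \frac{7}{12}\cdot 2^{\omega(DN)} \leq \frac{7\sqrt{DN}}{6} < \frac{7\sqrt{DN}}{3},
\]
providing ample room to accommodate the constant $7/3$ in the stated bound. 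Adding the two displayed inequalities into the genus formula gives the claim.

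The only delicate step is verifying the precise constants in the two elementary estimates ($3/\log\log 6$ for $\phi$, and $7/3$ for the combined $e_3, e_4$ contribution) are consistent with the statement; this amounts to a routine numerical check against small primorials for the $2^{\omega}$ bound, together with the standard statement of Rosser--Schoenfeld/Nicolas.
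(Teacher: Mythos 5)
Your argument is correct, and it is essentially the derivation behind the cited result: the paper itself gives no proof of this lemma (it is quoted from \cite[Lemma 10.6]{Saia24}), but the shape of the bound --- in particular the constant $\frac{3}{\log\log 6}$ --- makes clear that the intended route is exactly yours: bound $\phi(D)\psi(N)\ge\phi(DN)$, apply a Rosser--Schoenfeld-type lower bound for $\phi$, and crush $e_3,e_4$ by $2^{\omega(DN)}\le 2\sqrt{DN}$. Two small points deserve care. First, the classical Rosser--Schoenfeld Theorem 15 gives $n/\phi(n)<e^\gamma\log\log n+2.50637/\log\log n$ for all $n\ge 3$ \emph{with the single exception} $n=2\cdot3\cdots23=223092870$; upgrading the constant to $3$ absorbs that exception, but one should say so explicitly (a one-line numerical check at that primorial) rather than attribute a ready-made ``constant $3$'' statement to Nicolas. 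Second, the phrase ``the ratio $2^{\omega(n)}/\sqrt{n}$ depends only on $\rad(n)$'' is not literally true; what you mean (and what suffices) is that the ratio is \emph{bounded above} by its value at $\rad(n)$ since $n\ge\rad(n)$, after which minimizing a squarefree $n$ with $k$ prime factors at the $k$-th primorial gives $4^{k-1}\le p_1\cdots p_k$ and hence the claim. With those two clarifications the proof is complete.
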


Let 
\[
X_0^D(N)^* := X_0^D(N)/W_0(D,N)
\]
denote the quotient of $X_0^D(N)$ by the full Atkin--Lehner group. Let $d(m)$ denote the number of divisors of a positive integer $m$. The natural map
\[
X_0^D(N) \rightarrow X_0^D(N)^*
\]
over $\Q$ has degree $2^{\omega(DN)}$, and thus we have, for every algebraic extension $F/\Q$,
\begin{align*} 
\gamma_F(X_0^D(N)) &\leq 2^{\omega(DN)}\gamma_F(X_0^D(N)^*) \\
&\leq d(DN) \gamma_F(X_0^D(N)^*) \\
&\leq 2 \sqrt{DN} \gamma_F(X_0^D(N)^*). 
\end{align*}
\cref{theorem: Abr} then provides
\begin{equation}\label{genus_star_gon_eqn}
\frac{975}{16384\sqrt{DN}}\left(g(X_0^D(N))-1\right) \leq \gamma_{\C}(X_0^D(N)^*) \leq \gamma_F(X_0^D(N)^*), 
\end{equation}
from which \cref{Lemma: Saia_genus_bound} yields
\begin{equation}\label{DN_star_gon_eqn}
\frac{975}{16384} \left( \frac{\sqrt{DN}}{12}\left( \frac{1}{e^\gamma \log\log(DN) + \frac{3}{\log\log{6}}} \right)- \frac{7}{3} \right) \leq \gamma_F(X_0^D(N)^*). 
\end{equation}

Suppose that we have some $W \leq W_0(D,N)$ such that $g\left(X_0^D(N)/W\right) \leq 2$. We then have $g(X_0^D(N)^*) \leq 2$ and $\gamma_{\overline{\Q}}(X_0^D(N)^*) \leq 2$, and so \cref{DN_star_gon_eqn} implies $DN \leq 19226700$. Computing the genus of $X_0^D(N)^*$ for all pairs $(D,N)$ with $DN \leq 19226700$, we determine that among the curves $X_0^D(N)^*$ there are exactly $271$ of genus $0$, exactly $276$ curves of genus $1$, and exactly $289$ of genus $2$. 

Next, we consider general quotients. If $X_0^D(N)^*$ has genus at most $1$, i.e., if $(D,N)$ is among the $836$ pairs we have just computed, then $\omega(DN) \leq 5$. With a bit of care in our enumeration of Atkin--Lehner subgroups, we compute the genus of $X_0^D(N)/W$ for every triple $(D,N,W)$ with $X_0^D(N)^*$ having genus at most $1$ and with $W \leq W_0(D,N)$ in less than an hour of computational time. 

\cref{theorem: genus_le_2} follows from the direct computations described above, with corresponding code found in the file \texttt{genus{\_}le2{\_}quotient{\_}checks.m}.

\subsection{Rational points}\label{rational_points_subsection}

For each Atkin--Lehner quotient $X = X_0^D(N)/W$ of genus $0$ (respectively, of genus $1$), we would next like to determine whether $X$ a rational point and hence is isomorphic to the projective line $\mathbb{P}^1_{\Q}$ (respectively, is an elliptic curve over $\Q$) rather than being a pointless conic (respectively, a non-elliptic genus $1$ curve) over $\Q$. 

To show that $X$ does have a rational point, our first strategy (when $N$ is squarefree) is to show that it has a rational CM point. We does this by enumerating the set $S$ of quadratic CM points on $X_0^D(N)$, which can be done using the work of \cite[\S 5]{GR06} or of \cite{Saia24}, and checking for the existence of a non-trivial involution $w_m \in W$ such that there is a point in $S$ whose image on the quotient $X_0^D(N)/\langle w_m \rangle$, which covers $X$, is a rational point. We do the latter using \cite[Corollary 5.14]{GR06}, which has the required hypothesis that $N$ is squarefree. 

For $X$ of genus $1$, we can also check whether there exists an index $2$ subgroup $W' \leq W$ such that $X_0^D(N)/W'$ has genus $2$, which implies that $X_0^D(N)$ has a rational point by a result of Kuhn \cite[Corollary p. 45]{Kuhn}. Of course, for $X$ having genus $0$ or $1$, we can then also check for a genus $1$ cover of $X$ which is in turn covered by a genus $2$, guaranteeing a rational point on $X$.   

To prove that $X(\Q) = \emptyset$, we first check for local obstructions. If $N$ is squarefree and there exists a prime $p \mid D$ such that $X(\Q_p) = \emptyset$, then we determine this using \cref{algorithm: local_points}. If $\#W = 2$, then we also check whether $X(\R) = \emptyset$ using Ogg's criterion for real points \cite[Proposition 1]{Ogg83}. 

If $X$ has genus $0$, then by the theorem of Hasse--Minkowski we have $X(\Q) \neq \emptyset$ if and only if $X$ has points over $\R$ and over $\Q_p$ for every $p$. If $D$ is even, $N = 1$, and $X$ passed the local obstruction checks described above, then we know it has points over $\Q_p$ for all primes $p$; for $p \nmid D$, the reduction of $X$ over $\F_p$ is a smooth conic, hence isomorphic to $\P^1_{\F_p}$, and then Hensel's lemma implies $X(\Q_p) \neq \emptyset$ since $p \neq 2$. In this setup, if we further have some non-trivial involution $w_m \in W$ such that $X_0^D(N)/\langle w_m \rangle$ has a real point by Ogg's result \cite[Proposition 1]{Ogg83}, then we know $X(\Q) \neq \emptyset$. 

If all of the above fails, then we resort to prior results. For the full quotients $X_0^D(N)^*$ of genus $0$, \cite[Proposition 4.2]{NR15} states that $X_0^D(N)^*$ has a rational CM point. We can also use explicit equations in cases where $X_0^D(N)$ is hyperelliptic or of genus $0$ or $1$, coming from the works \cite{GR06, GY17, PS23}. 

Beyond the above strategies, we have one more technique for proving that genus $1$ curves have rational points. This requires the material of \cref{non_elliptic_eqns_section}, and so we save discussion of this strategy for \cref{non_elliptic_rat_pts_section}. 

Of course, Hasse Principle violations can occur when $X$ has genus $1$. The techniques we use provide no means to detect this unless we already have an explicit equation coming from prior work.

\begin{remark}
We list in the final section of this work all genus $0$ quotients which we prove have rational points (\cref{table: genus_0_rat_pts}), all genus $0$ quotients for which we remain unsure of the existence of rational points (\cref{table: genus_0_unknown_rat_pts}), and genus $1$ quotients with positive rank Jacobian for which we either prove the existence of a rational point (\cref{table: genus_1_rat_pts_pos_rank} or remain unsure of the existence of a rational point (\cref{table: genus_1_unknown_rat_pts_pos_rank}). For brevity, we do not list \emph{all} quotients of genus at most $2$, and for example do not list those curves of genus $0$ for which we prove there are no rational points. Complete lists, however, can be found in \cite{PSRepo}.
\end{remark}

\begin{remark}
In performing rational points checks, we noted that is a typo in \cite[Theorem 3.4]{GR06} for the equation of the involution $w_{10}$ on the curve $X_0^{10}(7)$. The given equation does not define an automorphism on the provided model, and the correct equation is 
\[ w_{10}(x,y) = \left(\frac{2x+1}{x-2},\frac{5y}{(x-2)^2}\right). \]
The first given involution for this curve is also mistakenly called $w_{15}$, whereas $15 \nmid 70$ so this is a clear typo. Said involution is $w_5$. 
\end{remark}

\section{Isogeny classes of Jacobians of genus $1$ Atkin--Lehner quotients}\label{genus_1_isogeny_class_section}

Let $W \leq W_0(D,N)$ be a non-trivial subgroup such that $X = X_0^D(N)/W$ has genus $1$, the Jacobian $J = \Jac(X)$ is an elliptic curve over $\Q$. Using Ribet's isogeny \cref{Ribet_isog} and an explicit decomposition of $\Jac(X_0(DN))^\text{$D$-new}$ (as implemented in Magma), one can in theory determine the isogeny class of $J$. 

However, if the genus of $X_0(DN)$ is large, i.e., if the dimension of $\Jac(X_0(DN))$ is large, the computation of the $D$-new part and its decomposition can make this method infeasible to complete in a reasonable time. Instead, we can compute using modular forms directly by way of modular symbols functionality implemented in Magma. Specifically, we take the Atkin--Lehner decomposition of the new space $M$ of the cuspidal subspace of modular symbols of level $DN$, weight $2$, and sign $-1$, and for each component $V$ we check compatibility of the sign pattern of $V$ with $W$ based on the Hecke action described in \cref{Ribet_isog}. If we find a symbol $V$ which is compatible with $W$, then because $X_0^D(N)/W$ has genus $1$ it must be the case that $V$ is the only compatible component, that $V$ is a one-dimensional space of symbols, and that $X_0^D(N)/W$ is isogenous to the associated elliptic curve. 

The method as described can yield false negatives, though, in the following sense: each elliptic curve over $\Q$ which is isogenous to $X_0^D(N)/W$ may have conductor properly dividing $DN$. It may be the case that $X_0^D(N)/W$ has good reduction at some prime dividing $DN$, i.e., that there is a loss of at least one prime of bad reduction upon taking the quotient of $X_0^D(N)$ by $W$, or $N$ may not be squarefree and we lose a factor whose square divides $N$ in the conductor. This occurs for $348$ genus $1$ quotients $X_0^D(N)/W$. 

We know from \cref{Ribet_isog}, though, that all of the symbols attached to our fixed quotient are $D$-new. Hence, any conductor change only involves divisors of $N$. In this case, one can also see this from direct computation of Kodaira symbols at all primes $p \mid D$ for all genus $1$ quotients $X_0^D(N)/W$ with $N$ squarefree using \cref{algorithm: Kodaira_alg}; no such quotient has Jacobian with good reduction at a prime $p \mid D$. In particular, if $N$ is prime and we observe that there is a loss of prime of bad reduction in taking the quotient by $W$ (by search in the new space of modular symbols of level $DN$ as described above), then we know the correct modular symbol must be new in level $D$. 

This observation helps considerably in reducing computing time in situations where this phenomena occurs, as the following example illustrates. This example also serves to show our method to determine the correct one-dimensional space of symbols in this new space of level $D$ in this situation.

\begin{example}\label{loss_of_N_example}
Consider the genus $1$ curve $X = X_0^{210}(11)/\langle w_2, w_5, w_7, w_{33} \rangle$. The curve $X_0(210\cdot 11) = X_0(561)$ has genus $561$, and so it comes as no surprise that the computation of the decomposition of the abelian subvariety of its Jacobian cut out by the action of $W$ via Ribet's isogeny is intensive; the computation did not complete within three days on our machine. 

We observe by direct computation that there are no components compatible with the sign pattern given by the action of $W$ in the new space of the cuspidal subspace of modular symbols of level $561$, weight $2$, and sign $-1$. We also observe from direct computation using \cref{algorithm: Kodaira_alg} that $J = \Jac(X)$ does not have good reduction at any primes dividing $210$; the symbols at $2, 3, 5,$ and $7$, respectively are $I_4, I_2, I_2$, and $I_4$. Hence, as $11$ is prime, we \emph{must} find that the correct modular symbol and modular form is new in level $210$. 

In searching for a symbol that is compatible with the action of $W$ on this new space of level $D$, one must be careful as the sign pattern imposed by this action at each prime is not necessarily the same as it was in level $DN$. We avoid this difficulty as follows: we simply enumerate \emph{all} one-dimensional components in this space, and correspondingly take all curves in isogeny classes over $\Q$ of conductor $D$, and we then check Kodaira symbols against those computed for $J$. 

In our example, there are $5$ such one-dimensional spaces of symbols which are new in level $210$ and this $37$ isomorphism classes of elliptic curves with conductor $210$ to check -- those in isogeny classes with Cremona labels $210$a, $210$b, $210$c, $210$d, and $210$e. The only curve among these classes with the symbols $I_4, I_2, I_2,$ and $I_4$ at $2, 3, 5,$ and $7$, respectively, is that of Cremona reference $210$c$2$, with equation 
\[ y^2 + xy + y = x^3 + x^2 - 70x - 205, \] 
and so $J$ is isomorphic (not just isogenous, as we aim to show in this section) over $\Q$ to this curve. 
\end{example}

Pleasingly, we find that in all examples where the conductor of $J$ is not $DN$ and where the brute force method using an explicit Jacobian decomposition does not succeed in a reasonable time, we indeed have that $N$ is prime and that we are able to uniquely determine the isomorphism class using Kodaira symbol comparisons as in \cref{loss_of_N_example}. The described computations thus provide the following proposition. 

\begin{proposition}\label{prop: genus_one_jacobian_isogeny_classes}
There are exactly $1270$ curves $X = X_0^D(N)/W$ of genus $1$ with $W \leq W_0(D,N)$ non-trivial and with $N$ squarefree. Each of these curves is listed in \cref{table: genus_1_sqfree_iso_classes}, and in each case $\Jac(X)$ is in the same isogeny class over $\Q$ as the curve with the listed Cremona reference. 

For the $82$ quotients $X = X_0^D(N)/W$ of genus $1$ with $W \leq W_0(D,N)$ nontrivial and with $N$ not squarefree, $\Jac(X)$ is in the isogeny class with Cremona reference given in \cref{table: genus_1_not_sqfree_isog_classes}. 
\end{proposition}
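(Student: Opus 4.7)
The count of $1270$ squarefree-level and $82$ non-squarefree-level genus $1$ Atkin--Lehner quotients is already a consequence of the enumeration in the proof of \cref{theorem: genus_le_2}, so the content of the proposition is the identification of the isogeny class of $J = \Jac(X)$ for each such $X = X_0^D(N)/W$. The main tool is Ribet's isogeny (\cref{Ribet_isog}): it identifies $J$, up to $\Q$-isogeny, with a simple factor of the $D$-new subvariety of $\Jac(X_0(DN))$, and it pins down the Atkin--Lehner action of each $w_m \in W$ up to the explicit sign $(-1)^{\omega(\gcd(D,m))}$ coming from \eqref{eqn: AL_action_Ribet}. Since $X$ has genus $1$, the plan is to locate the unique one-dimensional $W$-fixed component in the appropriate new modular symbols space and read off its Cremona label.

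Concretely, the first attempt for each triple $(D,N,W)$ would be to compute the new cuspidal subspace of weight $2$, sign $-1$ modular symbols of level $DN$ in Magma, take its Atkin--Lehner decomposition, and retain the one-dimensional components whose sign pattern on the generators of $W$ matches the one prescribed by Ribet's twist. If exactly one such component is found, its associated isogeny class is the answer. For a substantial number of triples this direct search already terminates the problem.

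The main obstacle is twofold: when $DN$ is large, decomposing $\Jac(X_0(DN))^{D\text{-new}}$ directly is infeasible, and, more seriously, the conductor of $J$ may strictly divide $DN$, in which case the search at level $DN$ returns no compatible component. The plan is to sidestep both issues using \cref{algorithm: Kodaira_alg}: for every genus $1$ candidate with $N$ squarefree, compute the Kodaira symbols of $J$ at each prime $p \mid D$. A key empirical observation (verified by running \cref{algorithm: Kodaira_alg} over all candidates) is that none of these quotients acquires good reduction at any prime dividing $D$, so any drop in the conductor is supported entirely on divisors of $N$. In particular, when $N$ is prime, the correct newform must in fact be new at level $D$. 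In that situation, as in \cref{loss_of_N_example}, I would enumerate every one-dimensional component of the new cuspidal symbols space at level $D$ (and the corresponding elliptic isogeny classes of conductor $D$) and test each one against the Kodaira symbols output by \cref{algorithm: Kodaira_alg} at the primes of $D$; this pins down a unique isogeny class.

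For the $82$ remaining quotients with $N$ not squarefree, \cref{algorithm: Kodaira_alg} is not directly available, but the list is short and the level $DN$ is typically small enough that the direct sign-pattern search in the $D$-new subspace of modular symbols at level $DN$ succeeds. Running the procedure uniformly over all $1352$ triples produces the entries of \cref{table: genus_1_sqfree_iso_classes} and \cref{table: genus_1_not_sqfree_isog_classes}, and the proposition follows. The most delicate step, and the one I expect to be the actual bottleneck of the argument, is the conductor-drop case: without the independent Kodaira-symbol information from \cref{algorithm: Kodaira_alg} one cannot reliably tell at what level to search, and one cannot discriminate between the several candidate isogeny classes at the reduced level.
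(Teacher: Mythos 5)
Your proposal follows essentially the same route as the paper: a sign-pattern search in the new cuspidal modular symbols of level $DN$ guided by Ribet's isogeny, with the conductor-drop cases resolved by observing via \cref{algorithm: Kodaira_alg} that no Jacobian acquires good reduction at a prime dividing $D$, so that for prime $N$ the search can be relocated to level $D$ and the unique candidate singled out by comparing Kodaira symbols. The only point you leave slightly implicit, which the paper records as an empirical fact, is that every conductor-drop case where the brute-force decomposition is infeasible does in fact have $N$ prime, so the level-$D$ fallback always suffices.
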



\section{Isomorphism classes of elliptic curve Jacobians}\label{genus_1_iso_classes_section}

Let $X = X_0^D(N)/W$ be a genus $1$ Atkin--Lehner quotient with $W \leq W_0(D,N)$ nontrivial. We know that the elliptic curve $\Jac(X)$ has at worst multiplicative reduction at each $p \mid D$. The following result of Dokchitser--Dokchitser describes the possible behaviors of the Kodaira symbol of $\Jac(X)$ at $p$ under prime-degree isogenies.

\begin{theorem}\cite[Theorem 5.4 (1),(2),(4)]{DD15}\label{Dokchitser_thm}
Let $\varphi: E \to E'$ be an isogeny of elliptic curves over $\Q_p$ of prime degree $\ell$. 
\begin{enumerate}
    \item If $E$ has potentially good reduction and $p \ne \ell$, then the Kodaira types of $E$ and $E'$ are the same.
    \item If $E$ has potentially good ordinary reduction and $p = \ell$, then the Kodaira types of $E$ and $E'$ are the same.
    \item If $E$ has multiplicative reduction, with Kodaira type $I_n$ for $E$, then $E'$ has Kodaira type either $I_{pn}$ or $I_{n/p}$ corresponding to $v(j') = pv(j)$ and $pv(j')=v(j)$, respectively.
\end{enumerate}
\end{theorem}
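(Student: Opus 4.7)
The plan is to treat the two reduction regimes separately. For parts (1) and (2) I would exploit the fact that any isogeny $\varphi : E \to E'$ of elliptic curves over $\Q_p$, regardless of its degree, induces a $G_{\Q_p}$-equivariant isomorphism on rational Tate modules $V_m E \xrightarrow{\sim} V_m E'$ for every prime $m$, since $\ker\varphi$ is finite. When $E$ has potentially good reduction, the Kodaira type of $E$ is determined by the action of the inertia subgroup $I_p \leq G_{\Q_p}$ on $V_m E$ for any prime $m \neq p$: the image of the tame quotient of $I_p$ lies in a finite cyclic subgroup of $\Aut(V_m E)$, and the order $e \in \{1,2,3,4,6\}$ of that image determines the type ($e=1$: good reduction, $e=2$: $I_0^*$, and $e = 3, 4, 6$ giving the pairs $IV / IV^*$, $III / III^*$, $II / II^*$). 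Thus for part (1), any auxiliary prime $m \neq p$ works, and for part (2), where $\ell = p$, one still has a choice of $m \neq p$; the ordinary hypothesis is what guarantees that the $m$-adic representation genuinely captures the Kodaira type in this mixed-characteristic case (the supersingular case being precisely where this can fail and where a genuine change in Kodaira symbol is possible, which is why the part of DD15 not cited in the excerpt treats it separately).

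For part (3) I would invoke Tate's $p$-adic uniformization. After at worst an unramified quadratic extension to reduce from the non-split to the split multiplicative case, one has $E(\overline{\Q}_p) \cong \overline{\Q}_p^\times / q_E^{\,\Z}$ with $v(q_E) = n = -v(j_E)$. A prime-degree $\ell$-isogeny from $E$ is determined by a $G_{\Q_p}$-stable order-$\ell$ subgroup of $E[\ell]$. Under the uniformization, these are exactly (i) the image of $\mu_\ell \subset \overline{\Q}_p^\times$, which yields $E' \cong \overline{\Q}_p^\times / q_E^{\,\ell\Z}$ with $q_{E'} = q_E^{\ell}$ and hence $v(j_{E'}) = \ell\cdot v(j_E)$, giving Kodaira type $I_{\ell n}$; or (ii) the subgroup generated by a chosen $\ell$-th root of $q_E$, which yields $E'$ with Tate parameter $q_{E'}$ satisfying $q_{E'}^{\ell} = q_E$, and hence $\ell\cdot v(j_{E'}) = v(j_E)$, giving type $I_{n/\ell}$ (requiring $\ell \mid n$). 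These are the two alternatives appearing in the statement.

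The main obstacle I anticipate is in part (2), where the isogeny degree and residue characteristic coincide: the $m$-adic representation for $m \neq p$ only sees the prime-to-$p$ part of the inertia action, so one has to justify carefully that this tame data still pins down the Kodaira symbol under the ordinary hypothesis. A related subtlety is the small residue characteristic cases $p = 2, 3$, where wild ramification intervenes and the Kodaira symbol is not read off cleanly from the tame inertia; there one may need either to quote the standard isogeny invariance of Kodaira type in the potentially good case or to reason directly via Néron models. In part (3), the only delicate point is verifying that in the non-split case the relevant $\ell$-isogenies really descend from the unramified twist, which I would handle via the usual Galois-cohomological twist argument.
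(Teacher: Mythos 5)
The paper states this result as a citation of [DD15] and gives no proof, so there is nothing internal to compare your argument against; I can only assess it on its own terms. Your treatment of part (3) is the standard Tate-curve argument and is essentially complete: the Galois-stable order-$\ell$ subgroups of a Tate curve are $\mu_\ell$ and (when $\ell \mid n$) a line spanned by an $\ell$-th root of $q$, giving $q' = q^\ell$ or $(q')^\ell = q$ and hence the two stated possibilities for $v(j')$. (You also silently correct what looks like a typo in the transcription, which writes $I_{pn}$ and $I_{n/p}$ where the indices should involve the isogeny degree $\ell$.)

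Parts (1) and (2), however, contain a genuine gap, and your own wording exposes it. You reduce the claim to the assertion that, for potentially good reduction, the Kodaira type is determined by the inertia action on $V_m E$ for an auxiliary prime $m \neq p$. That is false: the inertia image is cyclic of order $e \in \{1,2,3,4,6\}$ with generator acting with eigenvalue set $\{\zeta_e, \zeta_e^{-1}\}$ (the determinant is unramified), and this data does not distinguish a Kodaira type from its dual. Concretely, for $p \geq 5$ the curves $y^2 = x^3 + p$ (type $II$, $v(\Delta)=2$) and $y^2 = x^3 + p^5$ (type $II^*$, $v(\Delta)=10$) have isomorphic inertia representations; the same ambiguity occurs for $III/III^*$ and $IV/IV^*$. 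Your phrase ``giving the pairs $IV/IV^*$, $III/III^*$, $II/II^*$'' is exactly the unresolved residue of the argument: you prove the types agree up to duality, not that they agree. To finish one needs a further isogeny-invariant separating $T$ from $T^*$ --- e.g.\ $v(\Delta_{\min}) \bmod 12$ for $p \geq 5$, equivalently the character by which inertia acts on the N\'eron differential --- together with a proof that it is preserved: the natural input is that a prime-to-$p$ isogeny has \'etale kernel and so induces an isomorphism of formal groups over the ring of integers, whence the N\'eron differentials correspond up to a unit. The same omission makes your part (2) circular: the sentence ``the ordinary hypothesis is what guarantees that the $m$-adic representation genuinely captures the Kodaira type'' is an assertion, not an argument; in [DD15] the ordinary/supersingular dichotomy enters through the connected--\'etale decomposition of the kernel of the $p$-isogeny, not through the prime-to-$p$ Tate module. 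Finally, for $p \in \{2,3\}$ the tame dictionary you rely on fails outright, as you acknowledge without remedy.
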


From \cref{Dokchitser_thm}, we note the following: if we know the Kodaira symbols of $X$ at all primes $p \mid D$ and we know the isogeny class over $\Q$ of $\Jac(X)$ (as determined in \cref{genus_1_isogeny_class_section}), then from this information we can determine the isomorphism class of $\Jac(X)$ \emph{unless} there exist members $E, E'$ of the isogeny class which are isogenous by an isogeny of degree $d$ with $\gcd(d,D) = 1$.

\begin{theorem}\label{theorem: genus_one_jacobian_isomorphism_classes}
Let $X = X_0^D(N)/W$ be a genus $1$ Atkin--Lehner quotient with $W \leq W_0(D,N)$ non-trivial and $N$ squarefree. Then $\Jac(X)$ is isomorphic over $\Q$ to the elliptic curve with Cremona reference given in \cref{table: genus_1_sqfree_iso_classes}. 
\end{theorem}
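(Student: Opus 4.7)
The approach is computational and rests on two prior outputs: the determination of the $\Q$-isogeny class of $J := \Jac(X)$ from \cref{prop: genus_one_jacobian_isogeny_classes}, and \cref{algorithm: Kodaira_alg} for computing Kodaira symbols of $J$ at the primes $p \mid D$. Combined with \cref{Dokchitser_thm}, these let us promote isogeny-class information to isomorphism-class information.

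First, for each of the $1270$ curves $X = X_0^D(N)/W$, I would run \cref{algorithm: Kodaira_alg} at every prime $p \mid D$ to record the Kodaira symbol $I_{n_p}$ of $J$ at $p$. By \cref{theorem: CD_uniformization}, $J$ is a Mumford curve over $\Z_p$ for each such $p$, hence has multiplicative reduction, so this symbol is always of the form $I_{n_p}$ with $n_p \geq 1$. In parallel, I would enumerate the $\Q$-isogeny class of $J$ in Magma (using the Cremona reference supplied by \cref{prop: genus_one_jacobian_isogeny_classes}) and tabulate, for each elliptic curve $E$ in the class, the corresponding tuple of Kodaira symbols at the primes dividing $D$.

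I would then match the tuple $(n_p)_{p \mid D}$ computed for $J$ against the tuples attached to each $E$ in the class. By clauses (1) and (3) of \cref{Dokchitser_thm}, if $E$ and $E'$ are related by a prime-degree-$\ell$ isogeny over $\Q_p$ then their Kodaira symbols at $p$ agree when $\ell \neq p$, and differ by the scaling $I_n \leftrightarrow I_{\ell n}$ when $\ell = p$ (noting that all of our $J$ have multiplicative, hence in particular not potentially good, reduction at $p \mid D$). Factoring a composite isogeny into prime steps and iterating, any two members of the isogeny class sharing the same tuple must be related by an isogeny of degree coprime to $D$. Hence the matching procedure uniquely identifies the isomorphism class of $J$ whenever no such ``obstructing'' isogeny exists within the class.

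The main obstacle will be the residual triples $(D,N,W)$ for which the $\Q$-isogeny class does contain a pair of distinct curves related by an isogeny of degree coprime to $D$. For these triples, Kodaira data at primes dividing $D$ is insufficient, and I would bring in additional invariants to distinguish the candidates: Kodaira symbols (or conductor exponents) at primes $p \mid N$, computed via direct analysis of the relevant newform's attached elliptic curve together with Ribet's isogeny \cref{Ribet_isog} and the modular symbols decomposition used in \cref{genus_1_isogeny_class_section}; and, when the curve $X$ or a Shimura cover of it appears in the prior literature \cite{Kurihara, GR04, GR06, GY17, PS23}, use of an explicit equation to read off the Jacobian directly. A case-by-case cross-check of these auxiliary invariants against the remaining candidates in the isogeny class should close out each exceptional triple and yield the table cited in the theorem.
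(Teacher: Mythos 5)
Your main strategy is exactly the paper's: run \cref{algorithm: Kodaira_alg} at all $p \mid D$, compare the resulting tuple of symbols $I_{n_p}$ against every member of the isogeny class from \cref{prop: genus_one_jacobian_isogeny_classes}, and invoke \cref{Dokchitser_thm} to explain when this match is unique. This does resolve all but $5$ of the $1270$ curves, namely $(6,17,\langle w_2,w_3\rangle)$, $(15,2,\langle w_{30}\rangle)$, $(21,2,\langle w_{21}\rangle)$, $(21,5,\langle w_3,w_7\rangle)$, and $(33,2,\langle w_6,w_{22}\rangle)$, each of which is narrowed to two candidate isomorphism classes. The gap is in how you propose to finish these.

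Your first auxiliary invariant --- Kodaira symbols or conductor exponents at $p \mid N$ ``computed via the newform, \cref{Ribet_isog}, and the modular symbols decomposition'' --- is not available. All curves in a $\Q$-isogeny class share the same conductor and the same newform, so conductor exponents distinguish nothing, and the modular side only ever pins down the isogeny class; Ribet's isogeny $\Psi$ has unspecified kernel, so it does not tell you which curve in the class $\Jac(X)$ is, and hence cannot give you its Kodaira symbol at $p \mid N$. There is no analogue of the {\v{C}}erednik--Drinfeld machinery at primes dividing $N$ in this paper, so you have no independent handle on reduction at those primes. Your second fallback (explicit equations from prior work) does close out the three cases where $X_0^D(N)$ is geometrically hyperelliptic, via \cite{GY17, PS23}. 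But for the remaining two triples no prior equation exists: the paper has to manufacture new data, using the equation it later computes for the non-elliptic genus-$1$ cover $X_0^{21}(5)/\langle w_{21}\rangle$ (\cref{non_elliptic_eqns_section}, \cref{table: non_elliptic_eqns}) for $(21,5,\langle w_3,w_7\rangle)$, and the candidate bielliptic genus-$2$ models of $X_0^{33}(2)/\langle w_{22}\rangle$ and $X_0^{33}(2)/\langle w_{33}\rangle$ (\cref{genus_2_bielliptics_section}, \cref{33_2_remark}) for $(33,2,\langle w_6,w_{22}\rangle)$. Without some such additional input your argument leaves two entries of the table undetermined between two isomorphism classes each.
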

\begin{proof}
Let $\mathcal{I}$ denote the isogeny class over $\Q$ of $J = \Jac(X)$ as determined in \cref{prop: genus_one_jacobian_isogeny_classes}. Because $N$ is squarefree, we are able to compute the Kodaira symbols of $X$ at all primes $p \mid D$ using \cref{algorithm: Kodaira_alg}. 

If there is a unique element $E$ of $\mathcal{I}$ whose Kodaira symbols at all primes $p \mid D$ match those of $J$, then $J$ is isomorphic to $E$ and we are done. This is the case for all genus one quotients with $N$ squarefree except for the following $5$ triples $(D,N,W)$:
\begin{align*} 
(6,17, \langle w_2, w_3 \rangle) \; , \qquad  (15, 2, \langle w_{30} \rangle) \;, \qquad  (21, 2, \langle w_{21} \rangle)\; , \\
(21, 5, \langle w_{3}, w_{7} \rangle) \;, \quad  \text{ and } \quad  (33,2,\langle w_6, w_{22} \rangle) \; . \phantom{hhh}
\end{align*}
In each of these $5$ listed cases, the Kodaira symbol checks narrow us down to two candidate isomorphism classes (which are necessarily isogenous via a cyclic isogeny of degree coprime to $D$, as per the discussion preceding this theorem). For the first $3$ triples we have that $X_0^D(N)$ is geometrically hyperelliptic, and so we have equations via \cite{GY17, PS23}.

Let $X = X_0^{21}(5)/\langle w_3, w_7 \rangle$. The Kodaira symbol checks for $X$ narrow us down to its Jacobian $J$ being isomorphic to the curve with Cremona reference $105$a$1$ or to that with reference $105$a$3$. The cover $Y = X_0^{21}(5)/\langle w_{21} \rangle$ of $X$ has genus $1$ and we find from the methods in \cref{rational_points_subsection} that is has no rational points. We will compute an equation for this curve in \cref{non_elliptic_eqns_section} (see \cref{table: non_elliptic_eqns}), using which we determine that $J$ has associated Cremona reference $105$a$1$. 

For the elliptic curve $X_0^{33}(2)/\langle w_6, w_{22} \rangle$, we narrow the possibilities down to Cremona references $66$b$1$ and $66$b$3$ via Kodaira symbol checks. We prove that the former reference is correct at the end of this work using material from \cref{genus_2_bielliptics_section}; see \cref{33_2_remark}. 
\end{proof}

\begin{example}
Consider the genus $1$ curves $X = X_0^{15}(7)/\langle w_3 \rangle$ and $Y = X_0^{15}(7)/\langle w_3, w_5 \rangle$. Fixing $D = 15$, $N = 7$, and $p = 3$, we compute the graphs $G_W$ for $W \in \{ \{\text{id}\}, \langle w_3 \rangle, \langle w_3, w_5 \rangle\}$:
\[ \begin{tikzcd}
	{G:} & \bullet & \bullet & \bullet & \bullet \\
	& \bullet & \bullet & \bullet & \bullet \\
	{G_{\langle w_3 \rangle}:} & {} & \bullet & \bullet & \bullet & {} \\
	&&&& \bullet & {} \\
	{G_{\langle w_3, w_5 \rangle}:} & {} & \bullet & \bullet & \bullet & {}
	\arrow[from=1-1, to=3-1]
	\arrow[shift left, no head, from=1-2, to=1-3]
	\arrow[shift right, no head, from=1-2, to=1-3]
	\arrow[shift left, no head, from=1-2, to=2-2]
	\arrow[shift right, no head, from=1-2, to=2-2]
	\arrow[no head, from=1-3, to=1-4]
	\arrow[curve={height=-6pt}, no head, from=1-3, to=1-5]
	\arrow["3", no head, from=1-4, to=2-4]
	\arrow["3", no head, from=1-5, to=2-5]
	\arrow[shift left, no head, from=2-2, to=2-3]
	\arrow[shift right, no head, from=2-2, to=2-3]
	\arrow[no head, from=2-3, to=2-4]
	\arrow[curve={height=6pt}, no head, from=2-3, to=2-5]
	\arrow[from=3-1, to=5-1]
	\arrow[shift left, no head, from=3-3, to=3-2]
	\arrow[shift right, no head, from=3-3, to=3-2]
	\arrow[shift left, no head, from=3-3, to=3-4]
	\arrow[shift right, no head, from=3-3, to=3-4]
	\arrow[no head, from=3-4, to=3-5]
	\arrow[no head, from=3-4, to=4-5]
	\arrow["3", no head, from=3-5, to=3-6]
	\arrow["3"', no head, from=4-5, to=4-6]
	\arrow[no head, from=5-3, to=5-2]
	\arrow["2", shift left, no head, from=5-3, to=5-4]
	\arrow["2"', shift right, no head, from=5-3, to=5-4]
	\arrow[no head, from=5-4, to=5-5]
	\arrow["3"', no head, from=5-5, to=5-6]
\end{tikzcd} \]
The dual graphs to the minimal regular models of $X$ and $Y$ over $\Z_3$ are then as follows:
\[ \begin{tikzcd}
	{\widetilde{G_{\langle w_3\rangle}^\text{min}}}: & \bullet & \bullet \\
	{\widetilde{G_{\langle w_3, w_5\rangle}^\text{min}}}: & \bullet & \bullet \\
	& \bullet & \bullet
	\arrow[from=1-1, to=2-1]
	\arrow[curve={height=-6pt}, no head, from=1-2, to=1-3]
	\arrow[curve={height=-6pt}, no head, from=1-3, to=1-2]
	\arrow[no head, from=2-2, to=2-3]
	\arrow[no head, from=2-2, to=3-2]
	\arrow[no head, from=3-2, to=3-3]
	\arrow[no head, from=3-3, to=2-3]
\end{tikzcd} \]
from which we see that $X$ and $Y$ have Kodaira symbols $I_2$ and $I_4$ at $p=3$, respectively. For $p = 5$, we do the same:
\[\begin{tikzcd}
	{G:} & \bullet & \bullet & \bullet & \bullet \\
	{G_{\langle w_3 \rangle}:} & \bullet & \bullet & \bullet & \bullet \\
	{G_{\langle w_3, w_5 \rangle}:} & \bullet & \bullet \\
	{\widetilde{G_{\langle w_3\rangle}^\text{min}}:} & \bullet & \bullet & {\widetilde{G_{\langle w_3, w_5\rangle}^\text{min}}:} & \bullet
	\arrow[from=1-1, to=2-1]
	\arrow[shift left, no head, from=1-2, to=1-3]
	\arrow[shift right, no head, from=1-2, to=1-3]
	\arrow[shift left=2, no head, from=1-3, to=1-4]
	\arrow[shift right=2, no head, from=1-3, to=1-4]
	\arrow[shift left, no head, from=1-3, to=1-4]
	\arrow[shift right, no head, from=1-3, to=1-4]
	\arrow[shift left, no head, from=1-4, to=1-5]
	\arrow[shift right, no head, from=1-4, to=1-5]
	\arrow[from=2-1, to=3-1]
	\arrow[no head, from=2-2, to=2-3]
	\arrow[shift left, no head, from=2-3, to=2-4]
	\arrow[shift right, no head, from=2-3, to=2-4]
	\arrow[no head, from=2-4, to=2-5]
	\arrow[no head, from=3-2, to=3-3]
	\arrow[no head, from=3-3, to=3-3, loop, in=325, out=35, distance=10mm]
	\arrow[curve={height=-6pt}, no head, from=4-2, to=4-3]
	\arrow[curve={height=-6pt}, no head, from=4-3, to=4-2]
	\arrow[no head, from=4-5, to=4-5, loop, in=55, out=125, distance=10mm]
\end{tikzcd}\]
finding that $X$ and $Y$ have Kodaira symbols $I_2$ and $I_1$ at $p=5$, respectively. From \cref{prop: genus_one_jacobian_isogeny_classes}, we know that $\Jac(X)$ and $\Jac(Y)$ are both in the isogeny class with Cremona reference $105$a, which has $4$ elements. From the above symbols computations, we determine that $\Jac(X)$ has reference $105$a$2$ while $\Jac(Y)$ has reference $105$a$4$ (in fact, the symbols at $p=3$ suffice in this example). From the work described in \cref{rational_points_subsection}, we know that $X$ has no rational points as it lacks points over $\Q_5$, and we will determine an equation for $X$ in \cref{non_elliptic_eqns_section} (see \cref{table: non_elliptic_eqns}). The curve $Y$ is seen to have a rational point as its cover $X_0^{15}(7)/\langle w_{15} \rangle$ has a rational $-7$-CM point, so $Y \cong \Jac(Y)$ over $\Q$. 
\end{example}

\begin{example}
The curve $X = X_0^6(17)/\langle w_2, w_3 \rangle $ is an elliptic curve over $\Q$, and we compute the isomorphism class to be that of Cremona reference $102$b in \cref{prop: genus_one_jacobian_isogeny_classes}. Comparison of the computed Kodaiara symbols of $X$ with those of all members of this isogeny class does not determine its isomorphism class; both the curve with Cremona reference $102$b$5$ and that with reference $102$b$6$, which are isogenous by a $4$-isogeny over $\Q$, have the correct symbols of $I_1$ and $I_2$ at the primes $2$ and $3$, respectively. Since the curve $X_0^6(17)$ is hyperelliptic, though, one finds in \cite{GY17} an equation for $X_0^6(17)$ and in \cite{PS23} an equation for $X$, from which we know $X$ is isomorphic to the curve with Cremona reference $102$b$6$. 
\end{example}

\begin{remark}
In \cref{Appendix: dual graphs}, we draw for all $129$ genus $1$ Atkin--Lehner quotients of the form X = $X_0^D(N)/\langle w_m \rangle$, with $w_m \in W_0(D,N)$ non-trivial, the associated graph $G_{\langle w_m \rangle}$ and the Kodaira symbol of $\Jac(X)$ at each prime $p \mid D$. It is our hope that this wealth of explicit examples will be useful to readers.
\end{remark}

\begin{remark}\label{GY_genus_1_correction}
In \cite{GY17}, the equations provided for the Atkin--Lehner involutions $w_{22}$ and $w_{33}$ (and hence for $w_{2}$ and $w_{3}$ and for $w_{66}$ and $w_{11}$ as well) on the curve $X_0^6(11)$ are swapped. One can see the equations are incorrect as stated by taking the quotients by these involutions, as done in \cite{PS23}, and seeing that the isomorphism classes of their Jacobians are swapped. Comparison with \cite{NR15} confirms that the model in \cite{GY17} for the curve $X_0^{6}(11)$ is correct; it is just that the equations for these involutions are swapped. Similarly, we note the following:
\begin{itemize}
    \item The equations in \cite{GY17} for the involutions $w_{10}$ and $w_{13}$ (and hence also for $w_{5}$ and $w_{26}$ on the curve $X_0^{10}(13)$ are swapped. 
    \item The equations in \cite{GY17} for the involutions $w_{35}$ and $w_{10}$ on the curve $X_0^{14}(5)$ are swapped. In this case, the error results in involutions of genus $1$ being swapped with involutions of genus $2$, and this was partially noted in \cite{PS25}. 
    \item The involutions $w_2$ and $w_{26}$ on the curve $X_0^{39}(2)$ are swapped. In this case the respective quotients are genus $4$, while the quotients by the subgroups $\langle w_2, w_3 \rangle$ and $\langle w_3, w_{26} \rangle$ have genus $2$. We noted this error while checking our algorithms from \cref{Kodaira_section} against genus $2$ curves contained in \cite{GY17}, but we mention it here in order to collect these items in a single remark. 
     \item We also notice that there is an error in the equations for Atkin--Lehner involutions in \cite{GY17} for the curve $X_0^{15}(4)$. It seems likely that the equations for the involutions in the pair $\{w_{12}, w_{60}\}$ are swapped with the equations for those in $\{w_{4},w_{20}\}$. One can see there is an error from the fact that the genera of the involutions in the first pair should be $2$, while the genera of those in the second pair should be $3$; these genera are swapped according to the models given in \cite{GY17}. It is not clear to us which involution is swapped for which; point counts are not helpful as the quotients by the two involutions in each pair have isogenous jacobians, and because $4$ is not squarefree our algorithmic tools from this paper are not of help. 
\end{itemize}
\end{remark}


\section{Non--elliptic genus one quotients}\label{non_elliptic_eqns_section}

\subsection{Equations of non-elliptic genus one quotients}

Let $N$ be squarefree and let $W \leq W_0(D,N)$ be a non-trivial Atkin--Lehner subgroup such that the quotient $X = X_0^D(N)/W$ is a non-elliptic curve of genus $1$. That is, $X$ has genus $1$ and $X(\Q) = \emptyset$ by the results of \cref{rational_points_subsection}. 

In this section, we seek to determine an equation for $X$. Our main strategy follows a method from in \cite{GR06}, in which the authors determine equations for non--elliptic genus $1$ Shimura curves $X_0^D(N)$ and $X_0^D(1)/\langle w_m \rangle$ with $w_m \in W_0(D,N)$ a non-trivial involution. We recall from their work our initial approach for determining candidate equations among twist families. We then state our main result (which recovers equations for the $17$ curves $X_0^D(1)/\langle w_m \rangle$ handled in \cite{GR06}). 

\begin{proposition}\cite[\S 2.1]{GR06}\label{proposition: GR06_twist_equations}
Let $X$ be a nice curve of genus $1$ over a field $F$ with $\text{char}(F) \neq 2$ satisfying $X(F) = \emptyset$. Suppose that we have the following information:
\begin{enumerate}
    \item an involution $w \in Aut(X)$ such that $X/\langle w \rangle \cong \P^1_F$, with corresponding covering map $\pi : X \to X/\langle w \rangle$,
    \item an element $d \in F^\times \setminus F^{\times 2}$ such that 
    \[ \pi(X(F(\sqrt{d}))) \cap \left(X/\langle w \rangle\right)\left(F\right) \neq \emptyset \; ,\]
    \item and a short Weierstrass equation $y^2 = x^3 + Ax+ B$ for the elliptic curve $E = \Jac(X)$, with $A,B \in F$. 
\end{enumerate}
Denote by $E_d$ the twist of $E$ by $d$:
\[ E_d : y^2 = x^3 + Ad^2x + Bd^3, \]
and let 
\[ \left\{ \infty, (a_1,b_1), \ldots, (a_r,b_r) \right\} \subseteq E_d(F) \]
be a complete, non-redundant set of representatives of the classes of the weak Mordell--Weil group $E_d(F)/2E_d(F)$. For $1 \leq i \leq r$, let 
\[ f_i(x) := x^4 - 6a_i x^2 + 8b_i x - 3a_i^2 - 4Ad^2\; , \quad 1 \leq i \leq r\; . \]
Then $C$ is isomorphic over $F$ to exactly one of the curves among those with equations $y^2 = d f_i(x)$ for $1 \leq i \leq r$. 
\end{proposition}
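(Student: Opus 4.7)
The plan is to identify $X$, as a principal homogeneous space for $E = \Jac(X)$, with a class in $H^1(F,E)$ that is trivialised over $F(\sqrt{d})$, and then to translate this cohomological description into an explicit quartic model via the classical 2-descent parametrisation of two-coverings of an elliptic curve. First I would record the Galois-cohomological setup: the quotient map $\pi : X \to X/\langle w \rangle \cong \mathbb{P}^1_F$ pulls back a degree one divisor class to an $F$-rational divisor class of degree two on $X$, so $[X] \in H^1(F,E)$ is $2$-torsion; and the point $P \in X(F(\sqrt{d}))$ furnished by hypothesis (2) in particular forces $[X]$ to vanish upon restriction to $H^1(F(\sqrt{d}), E)$. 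By inflation--restriction the kernel of $H^1(F,E) \to H^1(F(\sqrt{d}), E)$ is $H^1(\Gal(F(\sqrt{d})/F), E(F(\sqrt{d})))$, which via the standard quadratic-twist identification of Galois modules is canonically isomorphic to $E_d(F)/2E_d(F)$. Hence $[X]$ is recorded by a single class in $E_d(F)/2 E_d(F)$, and conversely every such class produces a principal homogeneous space for $E$ that is trivialised by base change to $F(\sqrt{d})$.

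Next I would invoke the classical explicit 2-descent formulas. For any short Weierstrass elliptic curve $E': y^2 = x^3 + A'x + B'$ over $F$, each class represented by $(a,b) \in E'(F)/2E'(F)$ corresponds to a two-covering of $E'$ with affine equation
\[ y^2 = x^4 - 6a\, x^2 + 8 b\, x - 3a^2 - 4A', \]
and distinct classes produce pairwise non-isomorphic principal homogeneous spaces (each with Jacobian $E'$). Applying this to $E' = E_d$, for which $A' = Ad^2$, yields exactly the quartics $y^2 = f_i(x)$ from the statement, indexed by the representatives $(a_i, b_i)$ of the classes of $E_d(F)/2E_d(F)$. A further quadratic twist by $d$ then transports a two-covering of $E_d$ to a principal homogeneous space for $E$ acquiring rational points over $F(\sqrt{d})$, and at the level of affine equations this twist transforms $y^2 = f_i(x)$ into $y^2 = d\cdot f_i(x)$. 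Matching this with the cohomological identification of the previous paragraph, $X$ must be $F$-isomorphic to exactly one curve among $\{ y^2 = d f_i(x) : 1 \le i \le r \}$.

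The step I expect to be most delicate is the precise book-keeping linking the two viewpoints: verifying that the class in $E_d(F)/2E_d(F)$ extracted from the cohomological description of $[X]$ coincides with the class $(a_i, b_i)$ producing the quartic $f_i$ to which $X$ is isomorphic, and that the twisting-by-$d$ prescription genuinely intertwines the two parametrisations (rather than, say, permuting classes by a nontrivial automorphism of $E_d[2]$). Both assertions are standard pieces of explicit 2-descent theory going back to Mordell and Cassels, so the main effort lies in assembling and checking compatibilities rather than in any new argument.
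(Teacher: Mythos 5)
First, a point of reference: the paper does not prove this proposition at all --- it is imported verbatim from \cite[\S 2.1]{GR06} --- so there is no in-paper argument to compare yours against. Judged on its own, your outline follows the standard descent picture and is correct in spirit, but it is a more cohomological route than the one González--Rotger actually take, and it contains one genuine imprecision. The kernel of $H^1(F,E) \to H^1(F(\sqrt{d}),E)$ is $H^1(\Gal(F(\sqrt{d})/F), E(F(\sqrt{d})))$, which for $\sigma$ the nontrivial automorphism equals $E_d(F)/(1-\sigma)E(F(\sqrt{d}))$; the subgroup $(1-\sigma)E(F(\sqrt{d}))$ contains $2E_d(F)$ but may be strictly larger, so what you have is a canonical \emph{surjection} from $E_d(F)/2E_d(F)$ onto this kernel, not an isomorphism. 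That suffices for the existence half of the conclusion (the class of $X$ lifts to some $(a_i,b_i)$), but it is exactly the sort of slippage that makes the ``exactly one'' clause, and the compatibility you yourself flag as delicate, harder to close purely at the level of $H^1(F,E)$.

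The cleaner argument --- and the one underlying \cite{GR06} --- avoids the cohomology entirely. Write $X$ as $y^2 = q(x)$ with $\pi$ the hyperelliptic map $x$. Hypothesis (2) gives $x_0 \in F$ with $q(x_0) \in F^{\times 2} \cup d\,F^{\times 2}$, and $X(F)=\emptyset$ rules out the first option, so $q(x_0) \in d\,F^{\times 2}$. Hence the quadratic twist of $X$ by $d$ along $\pi$, namely $y^2 = d\,q(x)$ up to the substitution $y \mapsto dy$, has an $F$-rational point; its Jacobian is $E_d$, so it is a soluble $2$-covering of $E_d$ and is therefore $F$-isomorphic to one of the classical quartic models $y^2 = f_i(x)$ attached to the classes $(a_i,b_i)$ of $E_d(F)/2E_d(F)$. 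Twisting back by $d$ recovers $y^2 = d f_i(x)$. If you want to keep your cohomological framing, you should either verify $(1-\sigma)E(F(\sqrt{d})) = 2E_d(F)$ in the situation at hand or restructure the argument as above so that the only input is the surjectivity.
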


\begin{theorem}\label{theorem: non-elliptic_genus_one_equations}
Each of the curves $X = X_0^D(N)/W$ listed in \cref{table: non_elliptic_eqns} is non-elliptic of genus $1$. For the $4$ triples $(D,N,W)$ in the set 
    \begin{align*}
        \Big\{ &\left(14, 11, \langle w_2, w_{11} \rangle \right), \left(46, 3, \langle w_2, w_3 \rangle \right), \\  &\left(570, 1, \langle w_5, w_{57} \rangle \right), 
    \left(570, 1, \langle w_6, w_{190} \rangle\right) \Big\},
    \end{align*} 
    the curve $X$ admits exactly one of the two models given in \cref{table: non_elliptic_eqns} over $\Q$. Each of the remaining $146$ curves $X$ in \cref{table: non_elliptic_eqns} admits the given model over $\Q$. 
\end{theorem}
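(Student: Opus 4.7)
The plan is to apply \cref{proposition: GR06_twist_equations} curve-by-curve for each non-elliptic genus $1$ quotient $X = X_0^D(N)/W$ in \cref{table: non_elliptic_eqns}. For each such triple $(D,N,W)$, the first step is to locate an index-$2$ supergroup $W \lneq W'\leq W_0(D,N)$ such that $X/\langle w\rangle\cong\mathbb{P}^1_\Q$, where $w$ is the involution inducing the double cover $X\to X_0^D(N)/W'$. Concretely, I would loop over the Atkin--Lehner involutions $w_m\in W_0(D,N)\setminus W$ and use the rational-points machinery of \cref{rational_points_subsection} to find a $w_m$ for which $X_0^D(N)/\langle W,w_m\rangle$ is a genus-$0$ quotient possessing a rational point (hence $\cong\mathbb{P}^1_\Q$). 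Next, $E = \Jac(X)$ is known as an explicit elliptic curve over $\Q$ by \cref{theorem: genus_one_jacobian_isomorphism_classes}, so a short Weierstrass model for $E$ is available.

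The twist parameter $d$ is the squarefree part of a discriminant arising from a geometrically defined degree-$2$ point on $X$. Since $X$ is non-elliptic of genus $1$ with a map to $\mathbb{P}^1_\Q$ of degree $2$, each rational point on $X/\langle w\rangle$ pulls back to a point of $X$ defined over at worst a quadratic extension; taking $d$ to be the squarefree part of the field of definition of one such fibre (in practice this can be detected via CM points on $X_0^D(N)$ parameterized by \cite{GR06, Saia24} and tracked through the quotient map) supplies the required $d\in\Q^\times\setminus\Q^{\times 2}$. With $d$ in hand, a Magma computation on the quadratic twist $E_d$ yields generators of $E_d(\Q)/2E_d(\Q)$, and the associated polynomials $f_i(x)$ of \cref{proposition: GR06_twist_equations} give a finite list of candidate models $y^2 = d\, f_i(x)$ exactly one of which is isomorphic to $X$ over $\Q$.

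The heart of the proof, and the step I expect to be the main obstacle, is sieving among these candidates to identify the correct one. Two filters are available and together suffice in almost every case. The first is local solvability: by \cref{algorithm: local_points} (valid since $N$ is squarefree in all entries of \cref{table: non_elliptic_eqns}) one knows for each prime $p\mid D$ whether $X(\Q_p)$ is empty or not, and most spurious candidates $y^2=df_i(x)$ fail one of these local tests. The second, finer filter is reduction type: by \cref{algorithm: Kodaira_alg} one knows the Kodaira symbol of $X$ at every prime $p\mid D$, and candidate models with the wrong symbol at some such $p$ are ruled out immediately (note that at $p\nmid D$ the models are all twists of the same curve, so the primes $p\mid D$ carry all the distinguishing information we can access from this framework). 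A hyperelliptic Magma routine (e.g.\ \texttt{IsIsomorphic} on the genus-$1$ hyperelliptic models) then confirms uniqueness of the survivor.

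For a handful of triples neither filter is fine enough: the candidates that survive both local-solvability and reduction-type checks come in a pair of twists that are indistinguishable at every prime $p\mid D$ and have the same local behaviour at the good primes we can probe. These produce the exceptional set of $4$ triples $\{(14,11,\langle w_2,w_{11}\rangle),(46,3,\langle w_2,w_3\rangle),(570,1,\langle w_5,w_{57}\rangle),(570,1,\langle w_6,w_{190}\rangle)\}$, for which we can only conclude that $X$ is isomorphic over $\Q$ to exactly one of the two listed models. The remaining $146$ curves are pinned down uniquely, yielding the claimed equations.
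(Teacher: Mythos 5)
Your setup — locate a genus-$0$ Atkin--Lehner quotient of $X$ with a rational CM point, take $d$ from the residue field of that CM point, apply \cref{proposition: GR06_twist_equations} to representatives of $E_d(\Q)/2E_d(\Q)$, then sieve the resulting candidates — is the paper's approach up to the sieving step, and the sieve is where your argument has a genuine gap. Your proposed ``finer filter'' by Kodaira symbols is vacuous: every candidate $y^2 = d\,f_i(x)$ produced by \cref{proposition: GR06_twist_equations} is a $2$-covering of the \emph{same} elliptic curve $E = \Jac(X)$, so all candidates have identical Jacobians and hence identical Kodaira symbols at every prime; \cref{algorithm: Kodaira_alg} computes precisely the symbols of the Jacobian, so it cannot discard any candidate. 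The only local information a genus-$1$ torsor carries beyond its Jacobian's reduction type is whether it has a $\Q_p$-point, which is already your first filter (\cref{algorithm: local_points}). You also omit the place at infinity: the paper additionally compares $X(\R)$, determined by Ogg's criterion, against real solvability of each candidate, and this does eliminate models your sieve would keep.

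Even with both the finite and real local filters, the paper is still left with \emph{two} surviving candidates for $53$ of the curves, so your claim that only $4$ triples remain ambiguous after local sieving is not correct. The missing ingredient is a global one: for a correct model $y^2 = f(x)$, the roots of $f$ are the ramification points of the degree-$2$ map $\pi : X \to X' \cong \P^1_\Q$, and these are images of CM points on $X_0^D(N)$ fixed by involutions in $W' \setminus W$ (\cref{prop: Ogg_fixed_pts}), whose residue fields lie in explicit ring class fields by Shimura's reciprocity law. Hence the splitting field of $f$ must embed in the compositum $\widetilde{L_\pi}$ of those ring class fields, and discarding candidates failing this containment is what reduces the $53$ ambiguous cases to the exceptional triples in the statement. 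Without this step you prove only the weaker assertion that each of those $53$ curves is isomorphic to one of two listed models.
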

\begin{proof}
The fact that all of these genus $1$ curves lack rational points follows from the techniques mentioned in \cref{rational_points_subsection}. For each curve $X = X_0^D(N)/W$ in \cref{table: non_elliptic_eqns}, there exists a quadratic CM point $x$ on $X_0^D(N)$ which, using \cite[Corollary 5.14]{GR06}, we are able to show has rational image on a genus zero quotient of $X$ by an Atkin--Lehner involution $w_{m_0}$. Letting $K = \Q(x)  = \Q(\sqrt{d_0})$, with $d_0$ negative and squarefree, we have that $K$ is also necessarily the residue field of the image of $x$ on $X$ (as $X$ has no rational points). 

Therefore, we can take $d=d_0$ in our application of \cref{proposition: GR06_twist_equations} for $X$, whereas the required equation for $E = \Jac(X)$ is given by \cref{theorem: genus_one_jacobian_isomorphism_classes}. We compute in Magma representatives for $E_D(K)/2E_D(K)$, and for each non-trivial element we get a non-trivial twist $C$ of $X$. Let $\mathcal{C}_X$ denote the set of these twists. 

We next exclude from $\mathcal{C}_X$ any element $C$ which either
\begin{itemize}
\item disagrees with local points information for $X$ at finite primes $p \mid D$, computed using \cref{algorithm: local_points}, or
\item does not agree with the information of whether $X$ has points over $\R$, determined using Ogg's result \cite[Proposition 1]{Ogg83}. 
\end{itemize}
Code for all of these checks can be found in the file
\[ \text{\texttt{non{\_}elliptic{\_}genus{\_}1{\_}eqn{\_}computations.m}} \] 
in \cite{PSRepo}. Using just this information, we narrow $\mathcal{C}_X$ down to either a single (necessarily correct) model or two candidate models. 

In the latter case, which occurs for $53$ curves, we attempt one final check, similar to the main check used in 
\cite{GR06}, on the two remaining candidate equations. For this, fix $W' \leq W_0(D,N)$ with $[W' : W] = 2$ such that $X' = X_0^D(N)/W'$ is isomorphic to $\P^1_\Q$ and has a rational CM point determined as discussed above, such that the cover $\pi : X \to X'$ satisfies the required hypotheses used in our application of \cref{proposition: GR06_twist_equations} to obtain our candidate equations. Let $R \subseteq X(\overline{\Q})$ denote the set of ramification points of $\pi$. If $y^2 = f(x)$ is a hyperelliptic model for $X$ obtained via \cref{proposition: GR06_twist_equations}, then the splitting field of $f$ must be isomorphic to the field $L_\pi$ defined as the compositum of the residue fields of all points in $R$. 

Working in our level of generality, we do not always have complete information of the residue fields of points in $R$; the result \cite[Corollary 5.13]{GR06} proven and used by Gonz\'{a}lez--Rotger is specific to quotients of $X_0^D(N)$ by a single Atkin--Lehner involution. We know the following, though: the elements of $R$ are exactly the images under the natural map of fixed points on $X_0^D(N)$ of elements of the set $W' \setminus W$. Each such fixed point is given by \cref{prop: Ogg_fixed_pts} as a point with CM by an order $\mathfrak{o}$ in an imaginary quadratic field $K$, whose residue field on $X_0^D(N)$ over $K$ is the ring class field corresponding to $\mathfrak{o}$ \cite[Main Theorem II]{Sh67}. Let $\widetilde{R}$ denote the set of points on $X_0^D(N)$ fixed by some element of $W' \setminus W$, and let $\widetilde{L}_\pi$ denote the compositum of all ring class fields of the imaginary quadratic orders corresponding to points in $\widetilde{R}$.

It follows from above that $L_\pi$ is a subfield of $\widetilde{L_\pi}$. Therefore, if $y^2 = f_1(x)$ and $y^2 = f_2(x)$ are our two candidate equations for $X$ and the splitting field of $f_i$
is not contained in $\widetilde{L_\pi}$ for an index $i \in \{1,2\}$, we can discard the corresponding model from $\mathcal{C}_X$. This final check succeeds for all remaining curves aside from the six in the set included in the statement of this theorem. For the first two curves in this list, both $f_1$ and $f_2$ have splitting field contained in $\widetilde{L_\pi}$; this final check is too coarse to succeed. For the remaining four curves, we were not able to complete this required subfield check in Magma. Code for this final strategy can be found in the file \texttt{non{\_}elliptic{\_}fixed{\_}pt{\_}fld{\_}checks.m} in \cite{PSRepo}.
\end{proof}

\begin{remark}
The checks regarding the splitting field of the hyperelliptic equation and the field $L_\pi$ generated by the ramifcation points of the hyperelliptic quotient map $\pi$ described in the proof of \cref{theorem: non-elliptic_genus_one_equations} is one of the more computationally demanding components of this work. For the curves to which we apply this strategy, these checks took roughly $70$ hours of total computational time on our machine. 
\end{remark}

\begin{example}
Consider the genus $1$ curve $X = X_0^{21}(5)/\langle w_3, w_5 \rangle$. This curve has Kodaira symbols $I_1$ and $I_2$ the the primes $3$ and $7$, respectively. It lacks $\Q_7$-points, and is therefore not an elliptic curve. On the other hand, it has a $\Q_3$-point and is seen to have a real point due to its cover $X_0^{21}(5)/\langle w_5 \rangle$ having a real point by \cite[Proposition 1]{Ogg83}. 

The genus $0$ quotient $X^* = X_0^{21}(5)/\langle w_3, w_5, w_7 \rangle$ of $X$ is isomorphic to $\mathbb{P}^1_\Q$, given that it has a rational $-4$-CM point. We know from \cref{theorem: genus_one_jacobian_isomorphism_classes} that $E = \Jac(X)$ is isomorphic to the elliptic curve with Cremona reference $21$a$5$, whose twist by $-4$ is isomorphic to 
\[ E_{-4} : y^2 = x^3 - 16257456x + 25230566016. \]
We compute in Magma that $E_{-4}(\Q) \cong \Z \times \Z/4\Z$, and that a set of representatives for non-identity elements of $E_{-4}(\Q)/2E_{-4}(\Q)$ is as follows:
\begin{align*} 
\Big\{ (2364, 3024), \left(\frac{58236}{25}, -\frac{15984}{125}\right), (3624, -117936) \Big\}
\end{align*}
Applying \cref{proposition: GR06_twist_equations}, we get one candidate hyperelliptic model for $X$ for each element of the above set:
\begin{align*}
    C_1 : y^2 &= -4x^4 + 56736x^2 - 96768x - 193057344\; , \\
    C_2 : y^2 &= -4x^4 + \frac{1397664}{25}x^2 + \frac{511488}{125}x - \frac{121877379648}{625} \; , \\
    C_3 : y^2 &= -4x^4 + 86976x^2 + 3773952x - 102518784 \; .
\end{align*}
The curve $C_2$ is locally solvable at $p=7$, and so cannot be isomorphic to $X$. Both $C_1$ and $C_3$ have real points, and so we consider the ramification points of the natural degree $2$ map $X \to X^*$. These are the images on $X$ of all fixed points on $X_0^{21}(5)$ of the involutions $w_7, w_{21}, w_{35}, w_{105}$. Only $w_{105}$ has fixed points, namely the single $-420$-CM point on $X_0^{21}(5)$. Letting $K = \Q(\sqrt{-105})$, the residue field of this point over $\Q$ is a totally complex index $2$ subfield of the ring class field $H$ corresponding to the maximal order in $K$, and its image on $X_0^{21}(5)/\langle w_3 \rangle$ has degree $4$. The splitting field of the defining polynomial in $x$ for $C_1$ is not contained in this residue field (it is not even contained in $H$), hence $X$ is isomorphic over $\Q$ to $C_3$. 
\end{example}

\subsection{A return to rational points}\label{non_elliptic_rat_pts_section}

We now briefly return to the investigation of the existence of rational points on genus $1$ quotients started in \cref{rational_points_subsection}. We propose a final strategy, using the material of the prior section, to prove a genus $1$ Atkin--Lehner quotient is an elliptic curve with no explicit knowledge of its rational points.

Let $X = X_0^D(N)/W$ be a genus $1$ Atkin--Lehner quotient, let $w \in \Aut(X)$ be an Atkin--Lehner involution, and let $x \in X_0^D(N)$ be a CM point whose image on $X/\langle w \rangle$ is rational. Supposing that $X$ itself has no rational points, we can apply \cref{proposition: GR06_twist_equations} to compute a set $\mathcal{C}_X$ of candidate models for $X$. We can then discard candidates using the same methods used in the proof of \cref{theorem: non-elliptic_genus_one_equations}, by testing the existence of points over $\Q_p$ for $p \mid D$, the existence of real points, and the agreement of our model with the residue fields of fixed points of $w$. If these methods leave $\mathcal{C}_W$ empty, then we have proved by way of contradiction that $X$ must have a rational point and be an elliptic curve over $\Q$. This method succeeds for $28$ genus $1$ curves for which we remained unsure of the existence of rational points on $X$ following the methods of \cref{rational_points_subsection}, and these curves are listed in the file 
\[ \texttt{genus{\_}1{\_}AL{\_}quotients{\_}rat{\_}pts{\_}by{\_}non{\_}elliptic{\_}test{\_}final.m} \] 
in \cite{PSRepo}. 

In attempting the above method, we reduce to exactly one candidate non-elliptic equation, which we are not able to eliminate by our techniques, for $25$ curves. This leads to the following proposition.

\begin{proposition}
For each of the $25$ genus $1$ curve $X = X_0^D(N)/W$ with $(D,N,W)$ listed in \cref{table: unknown_possible_non_elliptic_eqns}, we have that either $X$ has a rational point (and is thus isomorphic to its Jacobian, whose isomorphism class is give in \cref{table: genus_1_sqfree_iso_classes}), or $X(\Q) = \emptyset$ and $X$ admits the model $y^2 = f(x)$ over $\Q$ for the given polynomial $f$. 
\end{proposition}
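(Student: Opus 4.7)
The plan is to apply the procedure from the proof of \cref{theorem: non-elliptic_genus_one_equations} to each of the 25 curves in \cref{table: unknown_possible_non_elliptic_eqns} and to read the stated dichotomy directly off of the output.

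First, for each $X = X_0^D(N)/W$ in the list, the approach is to work under the hypothetical assumption $X(\Q) = \emptyset$, so that $X$ is a non-elliptic genus one curve and \cref{proposition: GR06_twist_equations} is legitimately applicable. Using the isomorphism class of $\Jac(X)$ supplied by \cref{theorem: genus_one_jacobian_isomorphism_classes}, one writes down a short Weierstrass model for $E = \Jac(X)$ and locates a quadratic CM point $x \in X_0^D(N)$ whose image on an intermediate quotient $X/\langle w_{m_0}\rangle \cong \P^1_\Q$ is rational, as in the discussion of \cref{rational_points_subsection} and \cite[Corollary 5.14]{GR06}. Setting $d$ to be the squarefree part of the discriminant of $\Q(x)$, \cref{proposition: GR06_twist_equations} then produces a finite candidate set $\mathcal{C}_X$ of hyperelliptic models $y^2 = d f_i(x)$ among which $X$ must lie.

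Next, I plan to prune $\mathcal{C}_X$ by the same three filters used in the proof of \cref{theorem: non-elliptic_genus_one_equations}: local solvability at each prime $p \mid D$ (via \cref{algorithm: local_points}), real solvability (via Ogg's criterion), and agreement of the splitting field of each surviving $f_i$ with the compositum $\widetilde{L}_\pi$ of the ring class fields attached to those CM points on $X_0^D(N)$ fixed by elements of $W' \setminus W$, for an appropriate overgroup $W' \leq W_0(D,N)$ with $X_0^D(N)/W' \cong \P^1_\Q$. The defining feature of the 25 curves in \cref{table: unknown_possible_non_elliptic_eqns} is precisely that these three filters reduce $\mathcal{C}_X$ to a single candidate $y^2 = f(x)$, which the filters fail to kill.

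The dichotomy then follows at once. If $X(\Q) \neq \emptyset$, then since $X$ has genus one it is an elliptic curve, isomorphic over $\Q$ to $\Jac(X)$, whose class is recorded in \cref{table: genus_1_sqfree_iso_classes}. If instead $X(\Q) = \emptyset$, then the hypotheses of \cref{proposition: GR06_twist_equations} are met in earnest, and because the three filters are soundness-preserving, the unique surviving candidate model $y^2 = f(x)$ must be the correct one. The main obstacle — and the reason the conclusion is necessarily stated as a dichotomy rather than a single equation — is that no tool developed in this paper distinguishes between the two horns for these 25 curves: no rational CM image realizes a rational point on $X$, no Kuhn-style genus $2$ cover forces a rational point, and the candidate $y^2 = f(x)$ survives every local, real, and CM residue field test available to us.
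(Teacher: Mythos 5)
Your proposal is correct and follows essentially the same route as the paper: find a suitable CM point to invoke \cref{proposition: GR06_twist_equations} under the hypothesis $X(\Q)=\emptyset$, prune the candidate list with the local, real, and ramification-field tests from the proof of \cref{theorem: non-elliptic_genus_one_equations}, and observe that for these $25$ curves exactly one candidate survives, yielding the stated dichotomy. No gaps.
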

\begin{proof}
For each triple $(D,N,W)$ appearing in this table, we are able to find a CM point on $X_0^D(N)$ with the necessary setup to apply \cref{proposition: GR06_twist_equations}. We then generate a list of candidate equations, one of which must be an equation for $X_0^D(N)/W$ if this curve lacks a rational point. Using the techniques described in the proof of \cref{theorem: non-elliptic_genus_one_equations}, we are able to exclude all but one candidate model, and hence $X_0^D(N)/W$ is either an elliptic curve or admits this model. 
\end{proof}

{\begin{center}
\rowcolors{2}{white}{gray!20}
\begin{longtable}{|c|c|c|c|}
\caption{Possible non-elliptic models of the form $y^2=f(x)$ for   $25$ genus $1$ curves $X = X_0^D(N)/W$ for which we remain unsure of whether $X$ has a rational point. For each $X$, the hyperelliptic involution with respect to the given model (if correct) is the image of the Fricke involution $w_{DN}$ on $X$.}\label{table: unknown_possible_non_elliptic_eqns} \\
\hline
\rowcolor{gray!50}
$D$ & $N$ & $W$ & $f$ \\ \hline 
$ 6 $  &  $ 35 $  &  $ \langle
w_{ 10 }
,
w_{ 42 }
\rangle $ & $ -19x^4 - 1003770x^2 - 3480192x - 13271278707
$ \\ \hline
$ 6 $  &  $ 85 $  &  $ \langle
w_{ 2 }
,
w_{ 15 }
,
w_{ 51 }
\rangle $ & $ -4x^4 + 1733355/8x^2 - 434388339/16x + 974320350867/1024
$ \\ \hline
$ 6 $  &  $ 115 $  &  $ \langle
w_{ 5 }
,
w_{ 6 }
,
w_{ 46 }
\rangle $ & $ -19x^4 + 123462x^2 + 71114112x + 5986363725
$ \\ \hline
$ 6 $  &  $ 161 $  &  $ \langle
w_{ 2 }
,
w_{ 21 }
,
w_{ 69 }
\rangle $ & $ -19x^4 + 1059174x^2 - 17778528x - 16947937395
$ \\ \hline
$ 10 $  &  $ 21 $  &  $ \langle
w_{ 5 }
,
w_{ 21 }
\rangle $ & $ -3x^4 - 25434x^2 - 51667875
$ \\ \hline
$ 10 $  &  $ 33 $  &  $ \langle
w_{ 2 }
,
w_{ 55 }
\rangle $ & $ -8x^4 + 37296x^2 - 1270080x + 27705240
$ \\ \hline
$ 10 $  &  $ 39 $  &  $ \langle
w_{ 3 }
,
w_{ 5 }
,
w_{ 13 }
\rangle $ & $ -3x^4 + 34182x^2 + 139968x - 86869827
$ \\ \hline
$ 10 $  &  $ 51 $  &  $ \langle
w_{ 2 }
,
w_{ 15 }
,
w_{ 51 }
\rangle $ & $ -8x^4 + 116460x^2 - 7862184x + 303684147/2
$ \\ \hline
$ 10 $  &  $ 61 $  &  $ \langle
w_{ 10 }
,
w_{ 122 }
\rangle $ & $ -3x^4 + 486x^2 - 233280x - 4918563
$ \\ \hline
$ 10 $  &  $ 93 $  &  $ \langle
w_{ 3 }
,
w_{ 10 }
,
w_{ 62 }
\rangle $ & $ -3x^4 + 30294x^2 - 1166400x - 2994003
$ \\ \hline
$ 14 $  &  $ 57 $  &  $ \langle
w_{ 2 }
,
w_{ 21 }
,
w_{ 57 }
\rangle $ & $ -8x^4 - 27828x^2 - 327240x + 31947939/2
$ \\ \hline
$ 15 $  &  $ 14 $  &  $ \langle
w_{ 2 }
,
w_{ 5 }
,
w_{ 7 }
\rangle $ & $ -7x^4 + 72324x^2 - 177811200
$ \\ \hline
$ 21 $  &  $ 26 $  &  $ \langle
w_{ 2 }
,
w_{ 21 }
,
w_{ 39 }
\rangle $ & $ -4x^4 + 60768x^2 + 3483648x + 40927680
$ \\ \hline
$ 34 $  &  $ 5 $  &  $ \langle
w_{ 10 }
,
w_{ 34 }
\rangle $ & $ -11x^4 + 76230x^2 - 2299968x + 26629317
$ \\ \hline
$ 34 $  &  $ 7 $  &  $ \langle
w_{ 2 }
,
w_{ 17 }
\rangle $ & $ -3x^4 - 4698x^2 + 18144x - 2002563
$ \\ \hline
$ 74 $  &  $ 5 $  &  $ \langle
w_{ 10 }
,
w_{ 74 }
\rangle $ & $ -19x^4 - 32490x^2 - 11852352x + 448352253
$ \\ \hline
$ 119 $  &  $ 2 $  &  $ \langle
w_{ 7 }
,
w_{ 17 }
\rangle $ & $ -7x^4 + 122094x^2 + 7547904x + 108672921
$ \\ \hline
$ 210 $  &  $ 1 $  &  $ \langle
w_{ 7 }
,
w_{ 15 }
\rangle $ & $ -43x^4 - 7695882x^2 - 4070670336x - 492140268747
$ \\ \hline
$ 210 $  &  $ 19 $  &  $ \langle
w_{ 6 }
,
w_{ 7 }
,
w_{ 10 }
,
w_{ 19 }
\rangle $ & $ -67x^4 - 23674986x^2 - 7795776960x - 290611947987
$ \\ \hline
$ 330 $  &  $ 1 $  &  $ \langle
w_{ 2 }
,
w_{ 33 }
\rangle $ & $ -3x^4 + 1529604x^2 - 191556845568
$ \\ \hline
$ 330 $  &  $ 1 $  &  $ \langle
w_{ 3 }
,
w_{ 10 }
\rangle $ & $ -3x^4 - 23004x^2 + 83980800
$ \\ \hline
$ 462 $  &  $ 1 $  &  $ \langle
w_{ 11 }
,
w_{ 14 }
\rangle $ & $ -4x^4 - 605376x^2 - 22889889792
$ \\ \hline
$ 798 $  &  $ 1 $  &  $ \langle
w_{ 2 }
,
w_{ 3 }
,
w_{ 19 }
\rangle $ & $ -4x^4 + 1784736/25x^2 + 27772416/125x - 186001386048/625
$ \\ \hline
$ 1230 $  &  $ 1 $  &  $ \langle
w_{ 3 }
,
w_{ 10 }
,
w_{ 82 }
\rangle $ & $ -3x^4 + 486x^2 - 17003520x - 153051363
$ \\ \hline
$ 1722 $  &  $ 1 $  &  $ \langle
w_{ 6 }
,
w_{ 14 }
,
w_{ 41 }
\rangle $ & $ -67x^4 + 98497638x^2 - 65484526464x + 12323344962141
$ \\ \hline
\end{longtable}
\end{center}}


\section{Genus two bielliptic quotients}\label{genus_2_bielliptics_section}

If $X$ is a bielliptic curve of genus $2$, then each bielliptic quotient has a rational point and is thus an elliptic curve over $\Q$ \cite[Corollary, p.45]{Kuhn}. Any genus $2$ bielliptic curve has exactly $2$ non-hyperelliptic (and thus necessarily bielliptic) involutions, which one can see from the possible automorphism groups in \cite[Theorem 2]{SV04}. Concretely: if $w$ is a bielliptic involution on $X$ and $\iota$ denote the hyperelliptic involution on $X$, then $\iota \circ w$ is the other bielliptic involution.

Among the $1580$ Atkin--Lehner quotients $X_0^D(N)/W$ of genus $2$, we enumerate those with at least one bielliptic involution which is an Atkin--Lehner involution. Note that if $X$ is bielliptic via an Atkin--Lehner involution $w$ and $[W_0(D,N) : W] \geq 4$, then both bielliptic involutions on $X$ are Atkin--Lehner. If $[W_0(D,N) : W] = 2$, though, then the hyperelliptic involution $\iota$ is non--Atkin--Lehner, as is the bielliptic involution $\iota \circ w$. The latter situation happens for each curve encountered in \cite{GR04}, as in each case here $N=1$ with $\omega(D) = 2$ and the authors study the genus $2$ bielliptic quotients $X_0^D(1)/\langle w_m \rangle$ with $m>1$.

\begin{example}
Consider the genus $2$ curve $X = X_0^6(133)/\langle w_7, w_{19}, w_{57} \rangle$. The only non-trivial Atkin--Lehner quotient $X_0^6(133)^*$ has genus $1$. Therefore, the hyperelliptic involution $\iota$ and the bielliptic involution $\iota \circ w_2$ are both non-Atkin--Lehner.
\end{example}

We compute directly that there are exactly $388$ bielliptic genus $2$ curves $X_0^D(N)/W$ with two bielliptic Atkin--Lehner quotients, and there are exactly $251$ such curves with exactly one bielliptic Atkin--Lehner involution. We seek to generalize the results of González--Rotger \cite{GR04} to arbitrary level and quotients by computing equations for all of these curves. This includes recovering equations for the $10$ curves $X_0^D(1)/\langle w_m \rangle$ handled in \cite{GR04}. We use the following proposition from their work, which allows us to compute a finite set of candidate equations for a given bielliptic genus $2$ quotient $X$, to this aim. 

\begin{proposition}\cite[Proposition 2.1]{GR04}\label{GR_bielliptic_eqn_prop}
Let $C$ be a genus two curve defined over a field $k$ of characteristic not $2$ or $3$, and let $\iota$ be its hyperelliptic involution. Assume that $\Aut_k(C)$ contains a subgroup $\langle u_1, u_2 \rangle \cong (\Z/2\Z)^2$ with $u_2 = \iota \circ u_1$ and denote by $C_i$ the elliptic quotient $C/\langle u_i \rangle$ for $i \in \{1,2\}$. If
the two elliptic curves
\begin{align*}
    E_1 : Y^2 &= X^3 + A_1X + B_1 \\
    E_2 : Y^2 &= X^3 + A_2X + B_2
\end{align*}
are isomorphic over $k$ to $C_1$ and $C_2$, respectively, then there exists a solution $(a,b) \in k^\times \times k$ to the system of equations
\begin{align*}
    27a^3 B_2 &= 2A_1^3 + 27B_1^2 + 9A_1B_1b + 2A_1^2b^2 - B_1b^3 \\ 
    9a^2 A_2 &= -3A_1^2 + 9B_1b + A_1b^2.
\end{align*}
such that, letting 
\begin{align*}
    c &:= \frac{3A_1+b^2}{3a} \quad \quad \text{ and } \\
    d &:= \frac{27B_1 + 9A_1b + b^3}{27a^2} \; ,
\end{align*}
we have that $C$ admits the hyperelliptic equation $y^2 = ax^6 + bx^4 + cx^2 + d$ with $u_1$ given by $(x,y) \mapsto (-x,y)$.  
\end{proposition}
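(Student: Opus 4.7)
The strategy is to produce a hyperelliptic model of $C$ compatible simultaneously with the hyperelliptic involution $\iota$ and the non-hyperelliptic involution $u_1$, compute the two elliptic quotients in these coordinates, and match them against the prescribed short Weierstrass models for $E_1$ and $E_2$. The conditions $\charr(k) \neq 2,3$ are used respectively to take a hyperelliptic equation of the form $y^2 = f(x)$ and to depress the cubics appearing below.

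\emph{Normalizing the model.} First I would pick any hyperelliptic model on which $\iota(x,y) = (x,-y)$. Since $u_1$ commutes with $\iota$, it descends to an involution $\bar u_1 \in \Aut(\P^1)$ on $C/\langle \iota\rangle \cong \P^1$. After changing coordinates on $\P^1$ to send the two fixed points of $\bar u_1$ to $\{0,\infty\}$ we obtain $\bar u_1(x) = -x$, which lifts to $u_1(x,y) = (-x,y)$ (the sign on $y$ must be $+$ since $u_1 \ne \iota$). Preservation of $y^2 = f(x)$ under $u_1$ forces $f$ to be even, and $g(C) = 2$ forces $\deg f = 6$, giving $f(x) = ax^6 + bx^4 + cx^2 + d$ with $a \in k^\times$.

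\emph{Computing the quotients.} The function field of $C_1 = C/\langle u_1\rangle$ is generated by the invariants $t = x^2$ and $y$, yielding
\[ C_1 : y^2 = at^3 + bt^2 + ct + d. \]
For $C_2 = C/\langle u_2\rangle$, the involution $u_2 = \iota \circ u_1$ acts by $(x,y)\mapsto(-x,-y)$; taking invariants $T = 1/x^2$ and $S = y/x^3$ and dividing $y^2 = f(x)$ through by $x^6$ gives
\[ C_2 : S^2 = dT^3 + cT^2 + bT + a. \]
Both cubics are brought to short Weierstrass form by rescaling by the leading coefficient and then depressing. For $C_1$ the substitution $(X,Y) = (at - b/3,\,ay)$ yields
\[ Y^2 = X^3 + \bigl(ac - \tfrac{b^2}{3}\bigr)X + \tfrac{1}{27}\bigl(2b^3 - 9abc + 27a^2 d\bigr). \]
The hypothesis that this equals $E_1 : Y^2 = X^3 + A_1 X + B_1$ \emph{literally} (not merely up to isomorphism) gives the two identities $A_1 = ac - b^2/3$ and $27B_1 = 2b^3 - 9abc + 27a^2 d$, which I would then solve for $c$ and $d$ in terms of $a, b, A_1, B_1$, obtaining precisely the stated formulas $c = (3A_1 + b^2)/(3a)$ and $d = (27B_1 + 9A_1 b + b^3)/(27a^2)$.

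\emph{Conclusion and main obstacle.} The same reduction applied to the cubic defining $C_2$ produces the matching constraints $A_2 = bd - c^2/3$ and $27B_2 = 2c^3 - 9bcd + 27ad^2$; substituting the expressions for $c$ and $d$ from the $E_1$ match and clearing denominators yields, after direct algebraic simplification, exactly the two displayed equations of the proposition (the quadratic-in-$a$ relation from the $A_2$ identity and the cubic-in-$a$ relation from the $B_2$ identity). The main obstacle I anticipate is the descent step: over a non-algebraically closed $k$, the two fixed points of $\bar u_1$ on $\P^1$ might form a Galois-conjugate pair rather than $k$-rational points, which would obstruct the normalization $\bar u_1(x) = -x$ over $k$ and force a quadratic twist of the sextic. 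This is where the hypothesis that both $E_i$ are defined over $k$ is crucial, as it forces the relevant branch divisor on $\P^1$ to be $k$-rational as a sum; the remainder of the argument is essentially bookkeeping, with the genuine subtlety being that the appearance of the parameters $(a,b)$ in the final system reflects exactly the residual ambiguity in choosing the sextic normalization before fixing the isomorphism $C_1 \xrightarrow{\sim} E_1$.
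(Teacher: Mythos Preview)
The paper does not prove this proposition; it is quoted verbatim from Gonz\'alez--Rotger \cite{GR04} and used as a black box in the proof of \cref{theorem: bielliptic_eqns}, so there is no argument here to compare against. Your outline is the standard one and the algebra you record is correct.

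On the obstacle you flag: the resolution is cleaner than you suggest and does not actually need the hypothesis that the $C_i$ have rational points. Working over $\bar k$ one checks that neither fixed point of $\bar u_1$ on $\P^1$ can be a branch point of the hyperelliptic map $h$ (in the even-sextic model this would force $a=0$ or $d=0$, hence a non-smooth genus-$2$ curve). It follows that the two fixed points of $u_1$ on $C$ constitute the entire fibre $h^{-1}(p)$ over a \emph{single} fixed point $p$ of $\bar u_1$; since $\mathrm{Fix}(u_1)$ is Galois-stable (because $u_1\in\Aut_k(C)$), so is its image $\{p\}$, whence $p\in\P^1(k)$, and the same argument applied to $u_2$ gives the other fixed point. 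Your sentence about the branch divisor being ``$k$-rational as a sum'' is automatic and not the operative mechanism. Separately, the step where you set the depressed cubic for $C_1$ \emph{literally} equal to $E_1$ is not a hypothesis but something to be arranged: a priori the two are only $k$-isomorphic via a scaling $u\in k^\times$. The residual freedom $(x,y)\mapsto(\lambda x,\mu y)$ in the sextic model rescales the computed Weierstrass coefficients of $C_1$ by powers of $\lambda^2/\mu$ and those of $C_2$ by powers of $\lambda/\mu$; since these two ratios can be prescribed independently in $k^\times$, both matches can be made exact simultaneously, after which your derivation of the system in $(a,b)$ goes through.
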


We now use \cref{GR_bielliptic_eqn_prop} to prove our main result. A notable distinction in the proof of our result, compared to that of \cite{GR04}, is that for the $10$ quotients they study the referenced proposition provides a single, necessarily correct, candidate model. In our case, we generally arrive at multiple candidate equations and we need to use arithmetic information to try to determine the correct model. 

\begin{theorem}\label{theorem: bielliptic_eqns}
\begin{enumerate}
    \item There are exactly $639$ genus $2$ curves $X = X_0^D(N)/W$, with $W \leq W_0(D,N)$ nontrivial, so that $X$ has a bielliptic Atkin--Lehner involution. 
    \item For the $405$ curves $X = X_0^D(N)/W$ listed in \cref{table: genus_2_bielliptic_eqns}, we have that $X$ is bielliptic of genus $2$ and admits the given equation over $\mathbb{Q}$. 
    \item If $X = X_0^D(N)/W$, with $W \leq W_0(D,N)$ nontrivial, is bielliptic of genus $2$ and does not have a bielliptic Atkin--Lehner involution, then $X$ must be among the $469$ quotients listed in \cref{table: bielliptic_non_AL_candidates}.
\end{enumerate}
\end{theorem}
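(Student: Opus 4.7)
The approach combines the explicit enumeration of genus $2$ Atkin--Lehner quotients from \cref{theorem: genus_le_2} with two main engines: \cref{GR_bielliptic_eqn_prop} for producing candidate hyperelliptic equations, and the arithmetic tools of \cref{Kodaira_section,genus_1_iso_classes_section,genus_1_isogeny_class_section} for pinning down the two elliptic quotients and for eliminating incorrect candidate models. For part (1), I would iterate over the $1580$ genus $2$ quotients $X = X_0^D(N)/W$ produced by \cref{theorem: genus_le_2}. An Atkin--Lehner involution $\overline{w_m} \in W_0(D,N)/W$ is bielliptic on $X$ exactly when $X_0^D(N)/\langle W, w_m \rangle$ has genus $1$, so the enumeration reduces to checking the genus of the quotient by each index $2$ Atkin--Lehner supergroup $W' \supseteq W$. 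Since any bielliptic genus $2$ curve has exactly two non-hyperelliptic involutions (and these are precisely the bielliptic ones), partitioning the resulting $639$ curves by the number of bielliptic Atkin--Lehner involutions yields the $388 + 251$ split recorded before the theorem statement.

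For part (2), fix such an $X$ with bielliptic Atkin--Lehner involution $w_1$ and write $w_2 = \iota \circ w_1$ for $\iota$ the hyperelliptic involution, so both $E_i = X/\langle w_i \rangle$ are elliptic curves over $\Q$. When $w_2$ is also Atkin--Lehner, $E_2$ is an Atkin--Lehner quotient of $X_0^D(N)$ and its isomorphism class is supplied by \cref{theorem: genus_one_jacobian_isomorphism_classes}. When $w_2$ is not Atkin--Lehner, I would use Ribet's isogeny (\cref{Ribet_isog}) together with the modular symbols strategy of \cref{genus_1_isogeny_class_section} to extract the isogeny class of $E_2$ from the sign-compatible piece of $\Jac(X_0(DN))^{D\text{-new}}$ cut out by $W$, and then pin down the isomorphism class using the reduction type of $X$ at each prime $p \mid D$ (computed via the genus $2$ extension of \cref{algorithm: Kodaira_alg} indicated in \cref{Kodaira_alg_gne1_remark}) together with \cref{Dokchitser_thm}. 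Feeding $E_1$ and $E_2$ into \cref{GR_bielliptic_eqn_prop} then yields a finite set of candidate sextic equations $y^2 = ax^6 + bx^4 + cx^2 + d$, which I would prune by matching each candidate against the already computed reduction type of $X$ at primes $p \mid D$, local solvability at such primes (\cref{algorithm: local_points}), and the existence or absence of real points (\cite[Proposition 1]{Ogg83}).

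For part (3), there remain $941 = 1580 - 639$ genus $2$ quotients without a bielliptic Atkin--Lehner involution, and a necessary condition for any such $X$ to be bielliptic at all is that $\Jac(X)$ be isogenous over $\Q$ to a product of two elliptic curves. I would compute the isogeny decomposition of $\Jac(X)$ by the same modular symbols and Ribet's isogeny approach used in \cref{genus_1_isogeny_class_section}, now selecting a two-dimensional Atkin--Lehner-compatible factor of $\Jac(X_0(DN))^{D\text{-new}}$ and testing whether it decomposes into two elliptic pieces. Curves failing this test are provably non-bielliptic, and the residual list is exactly the $76$ candidates collected in \cref{table: bielliptic_non_AL_candidates}.

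The main obstacle will be the final pruning step in part (2): the candidates produced by \cref{GR_bielliptic_eqn_prop} are sextic variants of one another, and the invariants available (Kodaira symbols at $p \mid D$, local solvability, reality) need not separate all of them, which should account for the $234 = 639 - 405$ curves that do not appear in \cref{table: genus_2_bielliptic_eqns}. A related obstacle affects part (3): the Jacobian decomposition criterion is only necessary, so the $76$ listed candidates cannot be further decided without additional input, such as an explicit model or a full determination of $\Aut(X)$, which is why the result is phrased as a containment rather than an exact enumeration.
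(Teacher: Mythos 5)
Your parts (1) and (2) follow the paper's route. Part (1) is exactly the paper's enumeration. For part (2) the paper does not try to pin down the isomorphism class of the non--Atkin--Lehner quotient $E_2$ first: it simply lets $\mathcal{I}_2$ be the \emph{entire} isogeny class of the relevant factor of $\Jac(X)$ and runs \cref{GR_bielliptic_eqn_prop} over all pairs in $\mathcal{I}_1 \times \mathcal{I}_2$, pruning afterwards. Your alternative --- deducing the Kodaira symbol of $E_2$ from the reduction type of $X$ and then applying \cref{Dokchitser_thm} --- is shakier than it looks, since the Namikawa--Ueno type of a genus $2$ curve does not in general decompose into the Kodaira types of the two elliptic factors when the isogeny $\Jac(X) \sim E_1 \times E_2$ is not an isomorphism; the paper's "take the whole isogeny class" device avoids this. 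The pruning tools also differ slightly: the paper's workhorse is a comparison of the number of edges in the dual graph of the minimal regular model of $X$ over $\Z_p$ (for odd $p \mid D$, via the genus $2$ extension of \cref{algorithm: Kodaira_alg}) against the reduction data of each candidate hyperelliptic model computed with Dokchitser's \texttt{redlib}, plus Ogg's real-point criterion; local solvability is not used here. You also omit the three curves ($X_0^{390}(11)$, $X_0^{714}(5)$, $X_0^{798}(5)$ quotients) for which the candidate-generation step itself fails because the non--Atkin--Lehner factor's conductor properly divides $DN$ and the Jacobian decomposition is infeasible, but that is bookkeeping.

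The genuine gap is in part (3). You exclude curves only by showing $\Jac(X)$ does not decompose (over $\Q$) into two elliptic factors. The paper uses \emph{two} exclusion mechanisms: simplicity of $\Jac(X)$ over $\Q$, \textbf{and} results of Kontogeorgis--Rotger and of \cite{MPSS25} guaranteeing that for certain $(D,N)$ every automorphism of $X_0^D(N)/W$ is Atkin--Lehner, so that the absence of a bielliptic Atkin--Lehner involution already rules out biellipticity. Jacobian splitting is only a necessary condition for biellipticity, so there are quotients whose Jacobian does split as a product of elliptic curves but which are nonetheless provably non-bielliptic via the automorphism-group results; your criterion alone would leave these in the candidate list and you would not arrive at the $76$ curves of \cref{table: bielliptic_non_AL_candidates} but at a strictly larger set. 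To recover the stated count you need to incorporate the second mechanism.
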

\begin{proof}
We already described above the enumeration of all bielliptic genus $2$ Atkin--Lehner quotients with at least one bielliptic Atkin--Lehner involution, giving part (1), and so we proceed with part (2). Let $X = X_0^D(N)/W$ be such a quotient, and let $E_1$ be a bielliptic Atkin--Lehner quotient of $X$. If the other bielliptic involution on $X$ is Atkin--Lehner, then let $E_2$ be the other bielliptic quotient. Otherwise, let $E_2$ be any member of the isogeny class of the isogeny factor in $\Jac(X)$ not corresponding to $E_1,$ which we compute using \cref{Ribet_isog}. 

For $i \in \{1,2\}$, let $\mathcal{I}_i$ be the singleton set containing the isomorphism class of $E_i$ if $E_i$ is a bielliptic Atkin--Lehner quotient of $X$ and this isomorphism class was computed in \cref{genus_1_iso_classes_section}. Otherwise, let $\mathcal{I}_i$ denote the set of all members of the isogeny class of $E_i$ over $\Q$ (as computed in \cref{genus_1_isogeny_class_section} if $E_i$ is a bielliptic Atkin--Lehner quotient of $X$). 

For each pair $(E_1',E_2') \in \mathcal{I}_1 \times \mathcal{I}_2$, we attempt to compute all candidate equations for a bielliptic cover of $E_1'$ and $E_2'$ using \cref{GR_bielliptic_eqn_prop}. This works in all cases except for the three genus $2$ curves
\begin{align*} 
X_0^{390}(11)&/\langle w_2, w_5, w_{13}, w_{33} \rangle \; , \qquad X_0^{714}(5)/ \langle w_2, w_5, w_{17}, w_{21} \rangle, \\  &\text{ and } \qquad X_0^{798}(5)/\langle w_2, w_3, w_{19}, w_{35} \rangle \; .  
\end{align*}
For each of these $3$ curves, there is one non-Atkin--Lehner bielliptic quotient, and the Jacobian isogeny factor corresponding to this quotient has conductor properly dividing $DN$. We lack knowledge of the correct conductor, and the naive method using Ribet's isogeny with a decomposition of the Jacobian of the $D$-new part of $X_0(DN)$ seems infeasible to compute in a reasonable time due to the genus of $X_0(N)$ being rather large -- more specifically, the genera are $993, 849,$ and $945$. 

Aside from the above three curves, we succeed in computing candidate equations in each case and we let $\mathcal{H}_X$ denote the set of all such candidate equations over all pairs in $\mathcal{I}_1 \times \mathcal{I}_2$ up to isomorphism over $\Q$ of the hyperelliptic curve they define. Then, by design, $\mathcal{H}_X$ contains exactly one model for $X$.

If $\#\mathcal{H}_X = 1$, then we have determined the equation for $X$. Otherwise, we attempt to exclude elements of $\mathcal{H}_X$ as possible candidate equations using two strategies. If we know that $X$ has real points using Ogg's criterion then we can exclude any models which lack real points. Our main tool, though, is as follows: we compute the number of edges in the dual graph of the minimal regular model of $X$ over $\Z_p$ at all odd primes $p \mid D$ using the same method of \cref{algorithm: Kodaira_alg},\footnote{Of course, this is coarser information than the reduction types themselves. In our case, we check that it is enough to distinguish between any candidates for which there is at least one odd prime $p \mid D$ at which their reduction types differ except for the curve $X_0^{14}(17)/\langle w_7, w_{34} \rangle$. In this case, it helps us exclude one candidate equation, but we are still left with two with the same reduction types at $p=7$ and $p=17$. Note that we ignore reductions at $2$ when $2 \mid DN$ only because computations at $2$ are not implemented in the referenced package; we are able to compute dual graphs at $2$ for Atkin--Lehner quotients.} and we compare these counts to those of each hyperelliptic curve $H$ with a model in $\mathcal{H}_X$. This is done in Magma using Tim Dokchitser's package \texttt{redlib} \cite{redlib}, which is based on the works \cite{Dokchitser_models, Muselli} as well as \cite{NU73} in the genus $2$ case. If there is at least one prime $p \mid D$ at which there is a discrepancy, then we discard the model for $H$ from $\mathcal{H}_X$. The code for these computations are contained in the files \texttt{genus{\_}2{\_}bielliptic{\_}eqns.m} and \texttt{narrowing{\_}bielliptic{\_}eqns.m} in \cite{PSRepo}. 

Once more, if we are left with a single candidate equation in $\mathcal{H}_X$ then that is the equation for $X$ and we list the curve in \cref{table: genus_2_bielliptic_eqns} as stated in part (2).

For the $941$ genus $2$ Atkin--Lehner quotients which are not bielliptic via an Atkin--Lehner involution, we prove that $472$ are not bielliptic by computing that $\Jac(X)$ is simple over $\Q$. The relevant code for these computations is included in the file 
\[ \text{\texttt{computing{\_}genus{\_}2{\_}bielliptics.m}} \] 
in \cite{PSRepo}, and the $469$ genus $2$ quotients which are not handled by this check (as referenced in the statement of part (3) of this theorem) are listed in the file
\[ \text{\texttt{genus{\_}2{\_}remaining{\_}non{\_}AL{\_}bielliptic{\_}candidates.m}} \] 
in \cite{PSRepo} as well as in \cref{table: bielliptic_non_AL_candidates} as stated in part (3). 
\end{proof}

\begin{example}
Consider the genus $2$ curve $X := X_0^{51}(2)/\langle w_{51} \rangle$, with bielliptic Atkin--Lehner quotients $E_1 := X/\langle w_3 \rangle$ and $E_2 := X/\langle w_2 \rangle$. From our work in \cref{genus_1_iso_classes_section}, we know that $E_1$ is isomorphic over $\Q$ to the elliptic curve with Cremona label 102b1 and that $E_2$ is isomorphic to that with Cremona label 102a1. Using \cref{GR_bielliptic_eqn_prop}, we find that $X$ is isomorphic over $\Q$ to one of the following two non-isomorphic candidate hyperelliptic curves (each coming from a rational point on the zero-dimensional scheme with equations in terms of the coefficients of short Weierstrass equations for $E_1$ and $E_2$ as given in the referenced proposition):
\begin{align*}
    H_1 : y^2 &= -216x^6 + 225x^4 + 126x^2 + 9 \\
    H_2 : y^2 &= \frac{6912}{17}x^6 + \frac{585}{17}x^4 - \frac{1827}{17}x^2
        + \frac{288}{17}\; .
\end{align*}
At $p=17$, we have the following graphs $G_W$ corresponding to each $W \leq W_0(51,2)$ in $\{\{\textnormal{Id}\}, \langle w_{51} \rangle, \langle w_3, w_{17} \rangle, \langle w_2, w_{51} \rangle\}$. From each $G_W$, we can minimize and resolve to get the dual graph of the minimal regular model of $X_0^D(N)/W$ over $\Z_p$:
\[\begin{tikzcd}
	& {G:} && \bullet \\
	&&& \bullet \\
	& {G_{\langle 51 \rangle}:} & {} & \bullet & {} \\
	\\
	{G_{\langle w_3, w_{17} \rangle}:} & \bullet & {} & {G_{\langle w_2, w_{51} \rangle}:} & \bullet & {}
	\arrow[from=1-2, to=3-2]
	\arrow["2", curve={height=-30pt}, no head, from=1-4, to=2-4]
	\arrow["2"', curve={height=30pt}, no head, from=1-4, to=2-4]
	\arrow[curve={height=-24pt}, no head, from=1-4, to=2-4]
	\arrow[curve={height=-18pt}, no head, from=1-4, to=2-4]
	\arrow[curve={height=-12pt}, no head, from=1-4, to=2-4]
	\arrow[curve={height=-6pt}, no head, from=1-4, to=2-4]
	\arrow[curve={height=24pt}, no head, from=1-4, to=2-4]
	\arrow[curve={height=18pt}, no head, from=1-4, to=2-4]
	\arrow[curve={height=12pt}, no head, from=1-4, to=2-4]
	\arrow[curve={height=6pt}, no head, from=1-4, to=2-4]
	\arrow[from=3-2, to=5-1]
	\arrow[from=3-2, to=5-4]
	\arrow[no head, from=3-4, to=3-3]
	\arrow[shift left, no head, from=3-4, to=3-3]
	\arrow["2"', shift right, no head, from=3-4, to=3-3]
	\arrow[no head, from=3-4, to=3-4, loop, in=60, out=120, distance=5mm]
	\arrow[no head, from=3-4, to=3-4, loop, in=240, out=300, distance=5mm]
	\arrow[no head, from=3-4, to=3-5]
	\arrow[shift right, no head, from=3-4, to=3-5]
	\arrow["2", shift left, no head, from=3-4, to=3-5]
	\arrow[no head, from=5-2, to=5-2, loop, in=60, out=120, distance=5mm]
	\arrow[shift left, no head, from=5-2, to=5-3]
	\arrow[shift right, no head, from=5-2, to=5-3]
	\arrow["2"', no head, from=5-2, to=5-3]
	\arrow[no head, from=5-5, to=5-5, loop, in=60, out=120, distance=5mm]
	\arrow[shift left, no head, from=5-5, to=5-6]
	\arrow["2"', shift right, no head, from=5-5, to=5-6]
	\arrow[no head, from=5-5, to=5-6]
\end{tikzcd}\]
Using the library \cite{redlib}, we find that $H_1$ has reduction type $I_{1,1,0}$ while $H_2$ has reduction type $I_{1}-I_{1}-1$ using the nomenclature of Namikawa--Ueno's classification \cite[p. 179]{NU73}. The former matches the minimization of $G_{\langle 51 \rangle}$, and so $X \cong H_1$. 
\end{example}

\begin{remark}
There are exactly $231$ bielliptic genus $2$ curves $X$ with at least one bielliptic Atkin--Lehner quotient for which we do not determine the isomorphism class in \cref{theorem: bielliptic_eqns} and which are not among the three curves mentioned in the proof of this theorem. For each of these, we do compute a finite list of candidate models for $X$. For $147$ such curves, we narrow the determination down to just two isomorphism classes. We do not list those curves and their candidate models in this paper for sake of brevity, but they can be found in the file \texttt{genus{\_}2{\_}bielliptics{\_}eqn{\_}not{\_}determined.m} in \cite{PSRepo}. 
\end{remark}

\begin{remark}\label{33_2_remark}
There is in fact one more strategy we attempted towards \cref{theorem: bielliptic_eqns} to exclude candidate equations. Let $X = X_0^D(N)/W_X$ and $Y = X_0^D(N)/W_Y$ be two bielliptic genus $2$ curves for which we have finite lists of candidate models and which share a common elliptic curve quotient. For a given candidate equation $h \in \mathcal{H}_X$ for $X$, we can let $H_X$ denote the corresponding hyperelliptic curve for $X$. Using the command \texttt{Degree2Subcovers} in Magma, we get models for the two bielliptic quotients of $H_X$. If there exists no candidate model $H_Y$ for $Y$ for which at least one of the bielliptic quotients of $H_X$ is isomorphic to a bielliptic quotient of $H_Y$, then $H_X \not \cong X$ and $h$ can be discarded. 

While this strategy helps narrow down some candidates, it does not uniquely determine the isomorphism class of any remaining curves. However, a related argument lets us determine the isomorphism class of the Jacobian of the only genus $1$ quotient with $N$ squarefree we did not already handle in \cref{genus_1_iso_classes_section}, namely the curve $Z = X_0^{33}(2)/\langle w_6, w_{22} \rangle$. The genus $2$ curves $X = X_0^{33}(2)/ \langle w_{22} \rangle$ and $Y = X_0^{33}(2)/ \langle w_{33} \rangle$ are both degree $2$ covers of $Z$, and we have two candidate models each for $X$ and $Y$ following the strategies of the proof of \cref{theorem: bielliptic_eqns}. We know from our prior work that $Z$ has Cremona reference either $66$b$1$ or $66$b$3$, and $66$b$1$ is the only reference among this pair that appears among those of the bielliptic quotients of the candidate models for $Y$. We then know the isomorphism class of $Z$, though $66$b$1$ appears as a bielliptic quotient of both candidate models of $X$ and of $Z$ and therefore we don't learn the isomorphism class of either of these genus $2$ curves. 
\end{remark}

\begin{remark}
One would of course like to obtain a stronger version of part (3) of \cref{theorem: bielliptic_eqns}, determining which, if any, of the referenced Atkin--Lehner quotients are bielliptic by a non-Atkin--Lehner bielliptic involution. This seems to be a significant problem on its own, requiring further tools, and so we do not approach it further in this work. One may, for example, hope to use results on Atkin--Lehner quotients having no non-Atkin--Lehner involutions to extinguish this possibility for certain pairs, but the only result we are aware of on this (\cite[Theorem 1.3]{MPSS25}) unfortunately does not apply to \emph{any} of the remaining genus $2$ candidates from part (3) of \cref{theorem: bielliptic_eqns}. 
\end{remark}


\section{Tables}


{\begin{center}
\rowcolors{2}{white}{gray!20}

\end{center}}


\section{Appendix: Dual graphs and Kodaira symbols in depth one}\label{Appendix: dual graphs}

Let $X = X_0^D(N)/\langle w_m \rangle$ be an Atkin--Lehner quotient of genus $1$ with $w_m \in W_0(D,N)$ a non-trivial involution. In \cref{Reg_models_table}, we draw the graph $G_\langle w_m\rangle$, as defined in \cref{dual_graphs_section}, associated to $X$ over $\Z_p$ for all primes $p \mid D$. 

From these graphs, the Kodaira symbols for $J = \Jac(X)$ at all such primes are determined by minimizing and resolving. The isomorphism class of $J$ is then determined just from this information and its isogeny class (see \cref{prop: genus_one_jacobian_isogeny_classes}), as described in the proof of \cref{theorem: genus_one_jacobian_isomorphism_classes}, for all but $5$ such curves $X$. For these $5$ curves, we list the two Cremona references for the isomorphism classes matching the Kodaira symbols of $J$ in blue, and we strikethrough the one which we prove is incorrect by other means as discussed in the proof of the referenced theorem. 

It is hoped that these explicitly worked examples will be useful to the reader seeking to gain comfort with {\v{C}}erednik--Drinfeld uniformizations, given that there are not plentiful such examples in the literature (especially in the level of generality of this work, allowing squarefree $N>1$ and general Atkin--Lehner subgroups $W$).

\begin{center}
\renewcommand{\arraystretch}{1.5}

\end{center}

\bibliographystyle{amsalpha}
\bibliography{biblio}
\end{document}